\newtheorem{theorem}{Theorem}
\newtheorem*{theorem*}{Theorem}
\newtheorem*{thm*}{Theorem}
\newtheorem{cor}[theorem]{Corollary}
\newtheorem{lemma}[theorem]{Lemma}
\newtheorem*{conj*}{Conjecture}
\theoremstyle{definition}
\newtheorem{definition}[theorem]{Definition}
\numberwithin{theorem}{section}
\numberwithin{equation}{section}
\numberwithin{figure}{section}
\newcommand{\J}{J}
\newcommand{\Jmax}{\J_{\max}}
\newcommand{\newG}{G^*}
\newcommand{\newH}{H^*}
\newcommand{\newJ}{J^*}
\newcommand{\newnewG}{G^{**}}
\newcommand{\newnewJ}{J^{**}}
\newcommand{\newpi}{\pi^*}
\newcommand{\newdeg}{\deg^*}
\newcommand{\tree}[2]{\mathcal{T}_{#1,#2}}
\newcommand{\broom}[2]{B_{#1,#2}}
\newcommand{\double}[2]{D_{#1,#2}}
\newcommand{\Tnd}{\tree{n}{d}}
\newcommand{\Tn}{\mathcal{T}_n}
\newcommand{\Anr}{\mathcal{A}_{n,r}}
\newcommand{\Bnd}{B_{n,d}}
\newcommand{\Bnr}{B_{n,r}}
\newcommand{\Dnd}{D_{n,d}}
\def\Tmeet{T_{\mathsf{meet}}}
\def\Tbestmeet{T_{\mathsf{bestmeet}}}
\DeclareMathOperator {\Ret}{Ret}
\title{Random Walks and the Meeting Time for Trees}
\author{Andrew Beveridge\footnote{Department of Mathematics, Statistics and Computer Science, Macalester College,  Saint Paul, MN, USA, \texttt{abeverid@macalester.edu}},
 Ben Bridenbaugh\footnote{Department of Mathematics, Statistics and Computer Science, Macalester College,  Saint Paul, MN, USA, \texttt{bbridenb@macalester.edu}}  \,
 and Ari Holcombe Pomerance\footnote{Department of Mathematical and Statistical Sciences, University of Colorado Denver, Denver CO, USA, \texttt{ari.holcombepomerance@ucdenver.edu}}
 }
\date{}
\begin{document}
\maketitle

\begin{abstract}
Consider a random walk on a tree $G=(V,E)$. For $v,w \in V$, let the hitting time $H(v,w)$ denote the expected number of steps required for the random walk started at $v$ to reach $w$, and let $\pi_v = \deg(v)/2|E|$ denote the stationary distribution for the random walk. 
We characterize the extremal tree structures for the meeting time
$\Tmeet(G) = \max_{w \in V} \sum_{v \in V} \pi_v H(v,w)$. For fixed order $n$ and diameter $d$, the meeting time is maximized by the broom graph. The meeting time is minimized by the balanced double broom graph, or a slight variant, depending on the relative parities of $n$ and $d$.
\end{abstract}

%%%%%%%%%
%%%%%%%%%
%%%%%%%%%
\section{Introduction}

Let $G= (V,E)$ be a connected graph.  A \emph{random walk} on $G$ starting at vertex $w$ is a sequence of vertices $w=w_0, w_1, \ldots, w_t, \ldots$ such that  for $t \geq 0$, we have  
$$
\Pr(w_{t+1} = v \mid w_t = u ) =
\left\{ 
\begin{array}{cl}
1/\deg(u) & \mbox{if } (u,v) \in E, \\
 0  & \mbox{otherwise}.
 \end{array}
 \right.
 $$
For an introduction to random walks on graphs, see  H\"aggstr\"om \cite{Haggstrom2002} and Lov\'asz \cite{Lovasz1996}. For surveys on contemporary random walk applications, see Masuda et al.~\cite{MPL2017} and Riascos and Mateos \cite{RM2021}. 

The \emph{hitting time} $H(u,v)$ is the expected number of steps before a random walk started at vertex $u$ hits vertex $v$, where we define $H(u,u)=0$ for the case $u=v$. When $G$ is not bipartite, 
 the distribution of $w_t$ converges to the \emph{stationary distribution} $\pi$, given by $\pi_v = \deg(v)/2|E|$. We have convergence in the bipartite case when we follow a \emph{lazy walk}, which remains at the current state with probability $1/2$. This simply doubles the hitting times, so we will consider non-lazy walks for simplicity.

Herein, we consider extremal questions for trees of order $n$ with diameter $d$. 

 \begin{definition}
 The family of trees of order $n$  is denoted by $\Tn$.
     The family of trees of order $n$ with diameter $d$ is denoted by $\tree{n}{d}$.
 \end{definition}

 Ciardo et al.~\cite{CDK2020} characterized the upper and lower bounds on Kemeny's constant 
 $$ \kappa(G) = \sum_{u \in V} \sum_{v \in V} \pi_u \pi_v H(u,v)$$ 
 for trees $G \in \tree{n}{d}$.  
The unique minimizer is the balanced lever graph: a path on vertices $v_0, v_1, \ldots, v_d$ with $n-d-1$ additional leaves adjacent to vertex $v_{\lfloor d/2 \rfloor}$. The unique maximizer is the balanced double broom: a path on vertices $v_1, v_2 \ldots, v_{d-1}$ with $\lfloor n-d+1 \rfloor$ leaves adjacent to $v_1$ and $\lceil n-d+1 \rceil$ leaves adjacent to $v_{d-1}$.
 We consider a variant of $\kappa(G)$ where we choose the worst target vertex rather than drawing the target vertex from the stationary distribution $\pi$.

\begin{definition}
For a graph $G=(V,E)$ and a vertex $w \in V$, the \emph{meeting time at} $w$ is 
$$
H(\pi, w) = \sum_{v \in V} \pi_v H(v,w).
$$
The \emph{meeting time of graph} $G$ is 
$$
\Tmeet(G) = \max_{w \in V} H(\pi, w).     
$$
\end{definition}
We determine upper and lower bounds on $\Tmeet(G)$ for $G \in \Tnd$, showing that there is a unique maximizer and a unique minimizer. 
We encounter three subfamilies of trees, and provide some examples in Figure \ref{fig:tree-examples}.
We start with the upper bound.

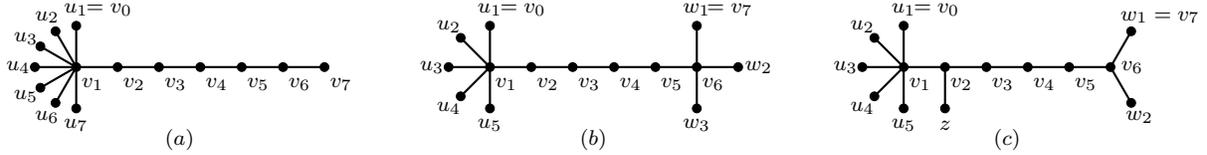
\begin{figure}

\begin{center}
   \begin{tikzpicture}[scale=0.55]

%%%% broom
\begin{scope}

\foreach \x in {1,2,3,4,5,6,7}
{
\draw[thick] (0,0) -- (60+\x*180/6:1);
\draw[fill] (60+\x*180/6:1) circle (3pt);
\node at (60+\x*180/6:1.4) {\scriptsize $u_{\x}$};
}

\node at (.8,1.4) {\scriptsize $= v_0$};

\draw[thick] (0,0) -- (6,0);

\foreach \x in {0,1,2,3,4,5,6}
{
\draw[fill] (\x,0) circle (3pt);
}

\foreach \x in {1,2,3,4,5,6,7}
{
\node[below] at (\x - .6,0) {\scriptsize $v_{\x}$};
}

\node at (2.5,-1.75) {\scriptsize $(a)$};

\end{scope}

%%%% double broom
\begin{scope}[shift={(10,0)}]

\foreach \x in {1,2,3,4,5}
{
\draw[thick] (0,0) -- (\x*180/4 + 45:1);
\draw[fill] (\x*180/4 + 45:1) circle (3pt);
\node at (\x*180/4 + 45:1.4) {\scriptsize $u_{\x}$};
}

\node at (.8,1.4) {\scriptsize $= v_0$};

\begin{scope}[shift={(5,0)}]
\foreach \x in {1,2,3}
{
\draw[thick] (0,0) -- (180-\x*180/2:1);
\draw[fill] (180-\x*180/2:1) circle (3pt);
}

\node at (90:1.4) {\scriptsize $w_{1}$};
\node at (0:1.5) {\scriptsize $w_{2}$};
\node at (-90:1.4) {\scriptsize $w_{3}$};

\node at (.8,1.4) {\scriptsize $= v_7$};

\end{scope}

\draw[thick] (0,0) -- (5,0);

\foreach \x in {0,1,2,3,4,5}
{
\draw[fill] (\x,0) circle (3pt);
}

\foreach \x in {1,2,3,4,5,6}
{
\node[below] at (\x - .6,0) {\scriptsize $v_{\x}$};
}

\node at (2.5,-1.75) {\scriptsize $(b)$};

\end{scope}

%%%% near double broom
\begin{scope}[shift={(20,0)}]

\foreach \x in {1,2,3,4,5}
{
\draw[thick] (0,0) -- (\x*180/4 + 45:1);
\draw[fill] (\x*180/4 + 45:1) circle (3pt);
\node at (\x*180/4 + 45:1.4) {\scriptsize $u_{\x}$};
}

\node at (.8,1.4) {\scriptsize $= v_0$};
\node at (1,-1.4) {\scriptsize $z$};

\begin{scope}[shift={(5,0)}]
\foreach \x in {1,2}
{
\draw[thick] (0,0) -- (180-\x*360/3:1);
\draw[fill] (180-\x*360/3:1) circle (3pt);
}

\begin{scope}[shift={(0.5,0)}]
\node at (60:1.4) {\scriptsize $w_{1} = v_7$};    
\end{scope}

\node at (-60:1.4) {\scriptsize $w_{2}$};

\end{scope}

\draw[thick] (0,0) -- (5,0);
\draw[thick] (1,0) -- (1,-1);

\foreach \x in {1,2,3,4,5}
{
\node[below] at (\x - .6,0) {\scriptsize $v_{\x}$};
}

\node[right] at (5,0) {\scriptsize $v_{6}$};

\foreach \x in {0,1,2,3,4,5}
{
\draw[fill] (\x,0) circle (3pt);
}

\draw[fill] (1,-1) circle (3pt);

\node at (2.5,-1.75) {\scriptsize $(c)$};

\end{scope}

\end{tikzpicture} 
\end{center}
    \caption{Some trees in $\tree{14}{7}$. (a) The broom graph $B_{14,7}$. (b) A double broom. (c) A near double broom.}
    \label{fig:tree-examples}
\end{figure}

\begin{definition}
\label{def:broom}
For $3 \leq d <n$, the broom $B_{n,d} \in \tree{n}{d}$  consists of a path $v_1, \ldots, v_{d}$ with  leaves $u_1, \ldots u_{n-d}$ incident with $v_1$, where we also label $v_0=u_1$ for convenience. The path $v_1, \ldots, v_{d}$ is the \emph{handle} of the broom, while $u_1, \ldots, u_{n-d}$ are the \emph{bristles} of the broom.     
\end{definition}

\begin{theorem}
\label{thm:max-meet}
For $3 \leq d < n$, the quantity 
$
\max_{G \in \mathcal{T}_{n,d}} \Tmeet(G) 
%= \max_{G \in \mathcal{T}_{n,d}} \max_{v \in V} H(\pi, v)
$ 
is achieved uniquely by the broom graph $B_{n, d}$. This value is
$$
\Tmeet(\Bnd) = H(\pi, v_d)
=
2(d-1) n + \frac{2d^3-6d^2+4d}{3(n-1)} -2 d^2+2 d+\frac{1}{2}.
$$
\end{theorem}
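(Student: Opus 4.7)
The standard tree hitting time formula gives $H(x, y) = 2|V(T_x)| - 1$ for an edge $xy$ in a tree $T$, where $T_x$ is the component of $T - xy$ containing $x$. Applied along the handle of $\Bnd$ this yields $H(v_j, v_{j+1}) = 2(j + n - d) - 1$ for $1 \leq j \leq d - 1$, and telescoping produces $H(v_i, v_d) = (d - i)(2n - d + i - 2)$ and $H(u_k, v_d) = 1 + (d - 1)(2n - d - 1)$ for each bristle. Weighting by $\pi_v = \deg(v)/(2(n-1))$ and grouping by vertex type (bristles, hub $v_1$, interior handle $v_2, \ldots, v_{d-1}$, tip $v_d$) reduces $H(\pi, v_d)$ to a polynomial in $n, d$ plus the arithmetic sum $\sum_{j=1}^{d-2} j(2n - j - 2)$ (after substituting $j = d - i$); collecting terms matches the stated closed form.

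\textbf{$v_d$ is the argmax on $\Bnd$.} I would prove the identity
$$H(\pi, w) = W_w - \kappa(G), \qquad W_w := \sum_{v \in V} \deg(v)\, d(v, w),$$
by combining the commute-time formula $H(v, w) + H(w, v) = 2(n - 1) d(v, w)$ for trees with the random target identity $\sum_v \pi_v H(w, v) = \kappa(G)$, which is independent of $w$. Since $\kappa(G)$ does not depend on $w$, the argmax of $H(\pi, \cdot)$ within a fixed tree equals the argmax of $W$. On $\Bnd$ direct computation gives $W_{v_d} - W_{u_k} = 2(d - 2)(n - d - 1) > 0$ for $d \geq 3, n \geq d + 2$, and $W_{v_d} > W_{v_i}$ for $i < d$ (moving the target toward the degree-heavy hub reduces distance-weighted mass), pinning down $v_d$.

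\textbf{Extremality via leaf-shift surgery.} Let $G \in \tree{n}{d}$ with $G \not\cong \Bnd$. In any tree the argmax of $W$ lies at a peripheral vertex, so we fix a diameter path $P = p_0 p_1 \cdots p_d$ with $\Tmeet(G) = H(\pi, p_d)$. Because $G$ is not a broom, some leaf $x$ of $G$ sits in a subtree attached at $p_i$ for some $i \geq 2$. Define the \emph{leaf-shift} $G \mapsto G'$ removing $x$'s unique edge and attaching $x$ as a new leaf of $p_1$. Using the edge decomposition $H(v, p_d) = \sum_{e \in P(v, p_d)} (2 \tau_a^e - 1)$, I would verify: (i) $G'$ is in $\tree{n}{d}$ since $p_0 p_d$ still realizes distance $d$ and $x$'s eccentricity in $G'$ is $d$; (ii) for every $v$, $H(v, p_d)_{G'} \geq H(v, p_d)_G$, because each handle edge $(p_l, p_{l+1})$ with $1 \leq l \leq i - 1$ has its $p_l$-side component grow by one; (iii) combined with the degree shift ($\deg(p_1)$ up by $1$, $\deg(p_i)$ down by $1$, contributing $H(p_1, p_d)_{G'} - H(p_i, p_d)_{G'} \geq 0$), the total $H(\pi, p_d) = \tfrac{1}{2(n-1)} \sum_v \deg(v) H(v, p_d)$ strictly increases. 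Iteration collapses $G$ to $\Bnd$ and gives uniqueness.

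\textbf{Main obstacle.} The delicate step is (iii) when $x$ lies deep in its subtree: edges internal to the subtree also have their component sizes change, and one must verify no $H(v, p_d)$ decreases. The cleanest route is to show that these internal changes shift $\tau$-sizes only on the side away from the handle, preserving all handle-side contributions. A secondary subtlety is the pre-reduction placing $\Tmeet(G)$ at a diameter endpoint; this follows from the identity above together with the fact that $W_w$ is maximized at a peripheral vertex in any tree.
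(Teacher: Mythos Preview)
Your computation of $\Tmeet(\Bnd)$ and the identification of $v_d$ as its argmax via the identity $H(\pi,w)=W_w-\kappa(G)$ are fine. The extremality argument, however, breaks: your leaf-shift surgery can \emph{decrease} $H(\pi,p_d)$, so neither (ii) nor (iii) holds in general. Take $d=4$ and let $G$ consist of the path $p_0p_1p_2p_3p_4$, a vertex $q$ adjacent to $p_2$, and $k\geq 3$ leaves $\ell_1,\dots,\ell_k$ adjacent to $q$. Then $G\in\tree{k+6}{4}$ and by symmetry $\Tmeet(G)=H(\pi,p_4)$. The leaf $x=\ell_1$ lies in the subtree at $p_i=p_2$, and your surgery produces $G'=G-(q,\ell_1)+(p_1,\ell_1)$. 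A direct computation gives $2(n-1)H(\pi,p_4)_G=12k^2+69k+141$ versus $2(n-1)H(\pi,p_4)_{G'}=12k^2+61k+157$, a drop of $8k-16>0$. Two things fail in your analysis: removing $\ell_1$ shrinks $V_{q:p_2}$, so $H(q,p_d)$ and every $H(\ell_j,p_d)$ \emph{decrease}, violating (ii); and the degree that drops is $\deg(q)$, not $\deg(p_i)$, so the degree-shift term in (iii) is $H'(p_1,p_d)-H'(q,p_d)$, which is negative here. Your proposed patch (``internal changes shift $\tau$-sizes only on the side away from the handle'') points to exactly the wrong side: the away-from-handle component is precisely what enters $H(v,p_d)$ for $v$ in that subtree.

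The paper abandons surgery for the maximisation direction. It roots $G$ at the argmax leaf $z$ (not assumed peripheral, so your unproved ``$W$ is maximised at a peripheral vertex'' claim is not needed), sets $r=\max_v d(v,z)\leq d$, and proves by double induction on $(n,r)$ that $B_{n,r}$ uniquely maximises $J(z)$ among rooted trees of order $n$ and depth $r$; the inductive step decomposes $J(z)$ over the components of $G-z$ and redistributes bristles between sub-brooms one at a time. A separate monotonicity lemma then gives $\Jmax(B_{n,r})<\Jmax(B_{n,d})$ whenever $r<d$.
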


The lower bound is slightly more complicated. We must treat $d=3$ as a special case, and for $n \geq 4$, our minimizing structure depends on the parity of $n-d$.

\begin{definition}
\label{def:double-broom}
 For $3 \leq d < n$, a \emph{double broom} $G \in \tree{n}{d}$  consists of a path $v_1, \ldots, v_{d-1}$ with leaves $u_1, \ldots, u_{\ell}$ incident with $v_1$, and leaves
$w_1, \ldots, w_r$ incident with $v_{d-1}$. For convenience, we also label $v_0=u_1$ and $v_d=w_1$.
%$\ell \geq 1$ (resp. $r \geq 1$) pendant edges incident with $v_1$ (resp. $v_{d-1}$), where we label one of these leaves as $v_0$ (resp.~$v_d$). 
 Note that such double brooms satisfy $n=\ell+r+d-1$.
We define $\Dnd \in \Tnd$ to be the \emph{balanced double broom} that satisfies $\ell = \lfloor (n-d+1)/2 \rfloor$ and  $r = \lceil(n-d+1)/2 \rceil$.
\end{definition}

For example, $\tree{n}{3}$ is the set of double brooms with diameter 3. These are more intuitively called \emph{double stars}: the union of two star graphs whose centers are connected by an edge.  

\begin{theorem}
\label{thm:min-meet-diam-3}
For $n \geq 4$, the quantity 
$
\min_{G \in \mathcal{T}_{n,3}} \Tmeet(G) 
%=\min_{G \in \mathcal{T}_{n,3}} \max_{v \in V} H(\pi, v)
$ 
is achieved uniquely by the balanced double star $\double{n}{3}$.
When $n$ is even, we have
$$
\Tmeet(\double{n}{3}) = H(\pi, v_0) = \frac{5}{2}n - 4 - \frac{1}{2(n-1)},
$$
and when $n$ is odd, we have
$$
\Tmeet(\double{n}{3}) = H(\pi, v_0) = \frac{5}{2}n - 3.
$$
\end{theorem}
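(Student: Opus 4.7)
The plan is to parametrize $\tree{n}{3}$ directly and reduce the problem to a single-variable optimization. A double star $D \in \tree{n}{3}$ is determined by specifying $\ell$ leaves at $v_1$ and $r$ leaves at $v_2$ with $\ell, r \geq 1$ and $\ell + r = n-2$. Using the classical tree formula $H(a,b) = 2|E(T_a)| + 1$, where $(a,b)$ is an edge and $T_a$ is the component of $D \setminus \{ab\}$ containing $a$, I would first tabulate all pairwise hitting times in terms of $\ell$, $r$, and $n$; summing along tree paths handles the non-adjacent pairs, giving for instance $H(w_j, u_k) = 1 + (2r+1) + (2n-3) = 2r + 2n - 1$.

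Next, I would compute $H(\pi, w)$ for each of the four vertex types $v_1$, $v_2$, $u_k$, $w_k$, using the stationary probabilities $\pi_{v_1} = (\ell+1)/(2(n-1))$, $\pi_{v_2} = (r+1)/(2(n-1))$, and $\pi = 1/(2(n-1))$ at each leaf. After substituting $\ell = n - 2 - r$, the expressions should collapse into the clean form
$$H(\pi, v_1) = \frac{2r(r+1)}{n-1} + \frac{1}{2}, \qquad H(\pi, u_k) = \frac{2r(r+1)}{n-1} + 2n - \frac{7}{2},$$
together with the mirror-image expressions in $\ell$ for $H(\pi, v_2)$ and $H(\pi, w_k)$. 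Since $H(\pi, u_k) - H(\pi, v_1) = 2n - 4 > 0$ for $n \geq 4$, the maximum of $H(\pi, \cdot)$ over $V$ is always attained at a leaf, yielding
$$\Tmeet(D) \;=\; \frac{2 M(M+1)}{n-1} + 2n - \frac{7}{2}, \qquad M = \max(\ell, r).$$

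The minimization over $\tree{n}{3}$ is then immediate. Given $\ell + r = n-2$ with $\ell, r \geq 1$, we have $M \geq \lceil (n-2)/2 \rceil$, and since $x \mapsto x(x+1)$ is strictly increasing for $x \geq 0$, the unique minimizer is the balanced double star $\double{n}{3}$. Substituting $M = \lceil (n-2)/2 \rceil$ in the two parity cases and simplifying should produce the stated closed forms. I expect the only mildly intricate step is the bookkeeping in $H(\pi, u_k)$, which collects contributions from the opposite center $v_2$ and from all $r$ leaves on the far side; once that expansion is carried out, uniqueness follows directly from the strict monotonicity of $x(x+1)$ and the remaining verification is routine algebra.
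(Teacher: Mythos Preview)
Your proposal is correct and follows essentially the same strategy as the paper: parametrize the double stars by $(\ell,r)$, compute the relevant meeting/joining times explicitly, observe that the maximum over $V$ is attained at a leaf on the larger side, and then minimize the resulting strictly increasing function of $\max(\ell,r)$. The only difference is packaging: the paper first establishes a general formula $J(v_0)=4n^2-11n-d+9+\sum_{i=1}^{d-2}(2r+2i-1)^2$ for arbitrary double brooms (Lemma~\ref{lemma:double-broom-join-time}) and then specializes to $d=3$, obtaining $J(v_0)=4n^2-11n+6+(2r+1)^2$, which is exactly your $2(n-1)\bigl(2n-\tfrac{7}{2}+\tfrac{2r(r+1)}{n-1}\bigr)$; you instead compute the $d=3$ case from scratch.
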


When $d \geq 4$, we need a second subfamily of trees in order to state our minimization result.

\begin{definition}
A \emph{near double broom} $G$ is a tree that is not a double broom, but with one leaf $z$ whose removal results in a double broom. This leaf $z$ is called the \emph{singleton leaf} of $G$. 
We define $\Dnd' \in \tree{n}{d}$ to be the \emph{balanced near double broom} with $\ell = \lfloor (n-d)/2 \rfloor$ and  $r = \lceil(n-d)/2 \rceil$ and whose singleton leaf $z$ is adjacent to $v_{\lfloor d/2 \rfloor}$.
\end{definition}

\begin{theorem}
\label{thm:min-meet}
For $4 \leq d < n$, the quantity 
$
\min_{G \in \mathcal{T}_{n,d}} \Tmeet(G) = 
\min_{G \in \mathcal{T}_{n,d}} \max_{v \in V} H(\pi, v)
$ 
is achieved uniquely by 
\begin{enumerate}[(a)]
\item the balanced double broom graph $\Dnd$ when $n$ and $d$ have opposite parities, with value
\begin{equation}
\label{eqn:min-meet-opposite-parity}
 \Tmeet(\Dnd) = H(\pi, v_d)
= \frac{1}{2}(d+2) n
+ \frac{d^3-6 d^2+8 d }{6(n-1)} 
-\frac{1}{2}(d+5).  
\end{equation}

\item the balanced near double broom graph $\Dnd'$ when $n$ and $d$ have the same parity, with value
\begin{equation}
\label{eqn:min-meet-same-parity}
\Tmeet(\Dnd') = 
\frac{1}{2}\left(d+2\right)
+ \frac{d^3-3d^2-d+6}{6(n-1)}
-
\begin{cases}
  \frac{1}{2}(d+5) & \mbox{if $d$ is even}, \\
 \frac{1}{2}(d+3)
 & \mbox{if $d$ is odd}. 
\end{cases}
\end{equation}
\end{enumerate}
\end{theorem}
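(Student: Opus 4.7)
The plan has four components: an edge-wise formula for $H(\pi,w)$, reduction of the max over $w$ to a leaf, structural collapse to $\Dnd$ or $\Dnd'$, and explicit optimization within those families.

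I would begin by deriving (or invoking from earlier) the identity
\[
H(\pi,w) \;=\; \frac{1}{2(n-1)} \sum_{e \in E}\bigl(2\,a_e(w)-1\bigr)^2,
\]
where $a_e(w)$ denotes the number of vertices in the component of $G-e$ not containing $w$. This comes from the standard tree hitting-time formula $H(u,v)=2a-1$ across an edge (with $a$ vertices on the $u$-side), a switch in the order of summation, and the tree identity $\sum_{v\in A_e(w)}\deg(v)=2a_e(w)-1$ for the $w$-free component. From this formula, one checks directly that if $w,w'$ are adjacent through the edge $e^*$, then $H(\pi,w')-H(\pi,w)=2(n-2\,a_{e^*}(w))$; hence a local maximizer of $H(\pi,\cdot)$ must have every neighbor's subtree of size at least $n/2$, forcing it to be a leaf. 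So $\Tmeet(G)$ is always attained at a leaf.

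Next, I would carry out a structural reduction. Fix a diameter path $v_0, v_1, \ldots, v_d$. If $G$ has a pendant subtree attached at some internal vertex $v_i$ ($2 \leq i \leq d-2$) other than the singleton leaf at $v_{\lfloor d/2 \rfloor}$, I would show via a local surgery---detaching one leaf from that subtree and reattaching it at $v_1$, $v_{d-1}$, or $v_{\lfloor d/2 \rfloor}$, depending on which side of the diameter currently attains the larger meeting time---that $\max(H(\pi,v_0), H(\pi,v_d))$ strictly decreases. The quantitative input is: when a leaf $y$ moves from $v_i$ to $v_j$ with $i<j$, then for the edges $(v_k,v_{k+1})$ with $i \leq k < j$ the quantity $a_e(v_0)$ increases by $1$ and $a_e(v_d)$ decreases by $1$, so by convexity of $x \mapsto (2x-1)^2$ the signs and sizes of the resulting changes are fully controllable. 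Iterating, $G$ reduces either to a double broom $D_{n,d}(\ell,r)$ or to the near double broom $\Dnd'$ with singleton leaf at $v_{\lfloor d/2 \rfloor}$.

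Within the class of double brooms with $\ell+r=n-d+1$, a direct computation using the closed form $\sum_{m=a+1}^{b}(2m-1)^2 = \tfrac{1}{3}\bigl(b(2b-1)(2b+1)-a(2a-1)(2a+1)\bigr)$ shows that $\max\bigl(H(\pi,v_0),H(\pi,v_d)\bigr)$ is minimized precisely at $|\ell-r|\le 1$, yielding $\Dnd$. When $n$ and $d$ have opposite parities, $\Dnd$ has $\ell=r$, and a direct subtraction shows it beats every balanced near double broom. When $n$ and $d$ have the same parity, $\Dnd$ has $|\ell-r|=1$, and the analogous comparison yields $\Tmeet(\Dnd')<\Tmeet(\Dnd)$. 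The explicit values (\ref{eqn:min-meet-opposite-parity}) and (\ref{eqn:min-meet-same-parity}) then drop out of these sums. The main obstacle will be the structural reduction: each surgery affects $a_e(v_0)$ and $a_e(v_d)$ in opposite directions, so choosing the correct target ($v_1$, $v_{d-1}$, or $v_{\lfloor d/2 \rfloor}$) requires tracking which leaf currently attains the max and ensuring that no other leaf's meeting time jumps past it. The parity dichotomy is the signature of this delicacy: the singleton leaf at the midpoint is precisely the device that rebalances a would-be unbalanced double broom, and verifying that no other placement (and no other tree) can do better requires the explicit algebra rather than a pure monotonicity argument.
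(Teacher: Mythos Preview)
Your edge-wise identity $H(\pi,w)=\tfrac{1}{2(n-1)}\sum_{e}(2a_e(w)-1)^2$ and the leaf-maximizer argument are correct and coincide with the paper's adjacent-vertex comparison (Lemma~\ref{lemma:adjacent-join-times} and Corollary~\ref{cor:leaf-is-max}). The closing optimization within double brooms and near double brooms also matches the paper's endgame.

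The genuine gap is in your structural reduction. You propose to detach \emph{one} leaf at a time from an interior spine vertex and reattach it at $v_1$, $v_{d-1}$, or $v_{\lfloor d/2\rfloor}$, chosen so that $\max\bigl(H(\pi,v_0),H(\pi,v_d)\bigr)$ strictly decreases. This can fail. Take the caterpillar in $\tree{9}{6}$ with two extra leaves both attached at the midpoint $v_3$: by symmetry $H(\pi,v_0)=H(\pi,v_6)$. Moving either leaf to any $v_j$ with $j\ne 3$ alters $a_e(v_0)$ and $a_e(v_6)$ in opposite directions on the spine edges between $v_3$ and $v_j$, so one endpoint's value strictly increases and the maximum goes \emph{up}; moving to $v_3$ is a no-op. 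Hence no single-leaf surgery reduces the maximum from this configuration, yet the balanced double broom $D_{9,6}$ has strictly smaller meeting time. You correctly flag this as the ``main obstacle,'' but the greedy single-leaf scheme you describe does not overcome it.

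The paper's route differs in exactly this respect. It first collapses any non-caterpillar to a caterpillar by moving a leaf \emph{toward} the spine, which decreases both $H(\pi,v_0)$ and $H(\pi,v_d)$ simultaneously (Lemma~\ref{lemma:caterpillarify-hitting-times}). Then, within caterpillars, it uses the \emph{paired} surgery $\tau(G,P;(i,i{-}1)\wedge(j,j{+}1))$ that slides one interior leaf leftward and another rightward at once; Lemma~\ref{lemma:two-leaf-pi-access-time} (imported from~\cite{BHOV}) shows this strictly decreases \emph{both} endpoint meeting times, defeating precisely the symmetric obstruction above and terminating at a double broom or a near double broom. Your plan needs this paired move, or an equivalent two-leaf device. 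A secondary issue: before you may track only $\max\bigl(H(\pi,v_0),H(\pi,v_d)\bigr)$ you also need the caterpillar fact of Lemma~\ref{lemma:max-on-caterpillar}, that the maximizing leaf is a spine endpoint; your leaf argument alone does not locate the maximizer on the chosen diameter path for a general tree.
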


The extremal graphs for $\tree{8}{k}$ and $2 \leq k \leq 7$ are shown in Figure \ref{fig:overview}.  Consolidating our results,
we obtain bounds on the meeting time of trees of order $n$. 

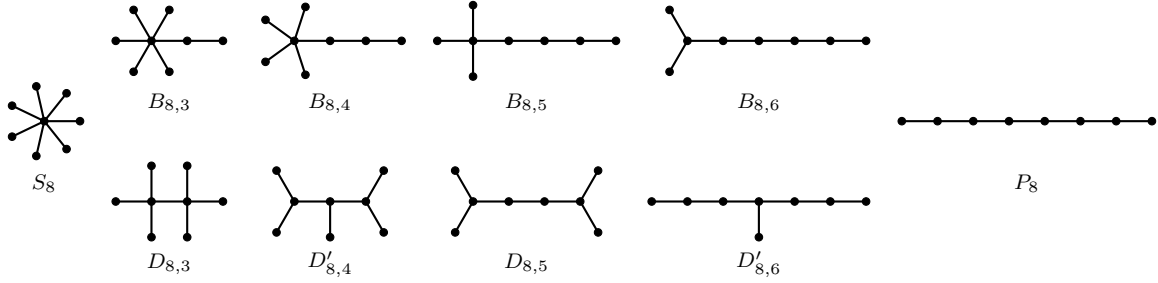
\begin{figure}

\begin{center}
   \begin{tikzpicture}[scale=0.475]

%%%% S_8 = D_{8,2} = B_{8,2}
\begin{scope}

\foreach \x in {0,1,2,3,4,5,6}
{
\draw[thick] (0,0) -- (\x*360/7:1);
\draw[fill] (\x*360/7:1) circle (3pt);
}

\draw[fill] (0,0) circle (3pt);

\node at (0,-1.75) {\scriptsize $S_8$};

\end{scope}

%%%%%%%%%%%%%%%%%%%%%%%%%%%%%%%%%%%%%%

%%%% B_{8,3}
\begin{scope}[shift={(3,2.25)}]

\foreach \x in {1,2,3,4,5}
{
\draw[thick] (0,0) -- (\x*360/6:1);
\draw[fill] (\x*360/6:1) circle (3pt);
}

\draw[thick] (0,0) -- (2,0);

\foreach \x in {0,1,2}
\draw[fill] (\x,0) circle (3pt);

\node at (.5,-1.75) {\scriptsize $B_{8,3}$};

\end{scope}

%%%% D_{8,3}
\begin{scope}[shift={(3,-2.25)}]

\foreach \x in {1,2,3}
{
\draw[thick] (0,0) -- (\x*360/4:1);
\draw[fill] (\x*360/4:1) circle (3pt);
}

\begin{scope}[shift={(1,0)}]
\foreach \x in {1,2,3}
{
\draw[thick] (0,0) -- (180-\x*360/4:1);
\draw[fill] (180-\x*360/4:1) circle (3pt);
}
\end{scope}

\draw[thick] (0,0) -- (1,0);

\foreach \x in {0,1}
\draw[fill] (\x,0) circle (3pt);

\node at (.5,-1.75) {\scriptsize $D_{8,3}$};

\end{scope}

%%%%%%%%%%%%%%%%%%%%%%%%%%%%%%%%%%%%%%

%%%% B_{8,4}
\begin{scope}[shift={(7,2.25)}]

\foreach \x in {1,2,3,4}
{
\draw[thick] (0,0) -- (\x*360/5:1);
\draw[fill] (\x*360/5:1) circle (3pt);
}

\draw[thick] (0,0) -- (3,0);

\foreach \x in {0,1,2,3}
{
\draw[fill] (\x,0) circle (3pt);
}

\node at (1,-1.75) {\scriptsize $B_{8,4}$};

\end{scope}

%%%% D_{8,4}
\begin{scope}[shift={(7,-2.25)}]

\foreach \x in {1,2}
{
\draw[thick] (0,0) -- (\x*360/3:1);
\draw[fill] (\x*360/3:1) circle (3pt);
}

\begin{scope}[shift={(2,0)}]
\foreach \x in {1,2}
{
\draw[thick] (0,0) -- (180-\x*360/3:1);
\draw[fill] (180-\x*360/3:1) circle (3pt);
}
\end{scope}

\draw[thick] (0,0) -- (2,0);
\draw[thick] (1,0) -- (1,-1);

\foreach \x in {0,1,2}
{
\draw[fill] (\x,0) circle (3pt);
}

\draw[fill] (1,-1) circle (3pt);

\node at (1,-1.75) {\scriptsize $D_{8,4}'$};

\end{scope}

%%%%%%%%%%%%%%%%%%%%%%%%%%%%%%%%%%%%%%

%%%% B_{8,5}
\begin{scope}[shift={(12,2.25)}]

\foreach \x in {1,2,3}
{
\draw[thick] (0,0) -- (\x*360/4:1);
\draw[fill] (\x*360/4:1) circle (3pt);
}

\draw[thick] (0,0) -- (4,0);

\foreach \x in {0,1,2,3,4}
{
\draw[fill] (\x,0) circle (3pt);
}

\node at (1.5,-1.75) {\scriptsize $B_{8,5}$};

\end{scope}

%%%% D_{8,5}
\begin{scope}[shift={(12,-2.25)}]

\foreach \x in {1,2}
{
\draw[thick] (0,0) -- (\x*360/3:1);
\draw[fill] (\x*360/3:1) circle (3pt);
}

\begin{scope}[shift={(3,0)}]
\foreach \x in {1,2}
{
\draw[thick] (0,0) -- (180-\x*360/3:1);
\draw[fill] (180-\x*360/3:1) circle (3pt);
}
\end{scope}

\draw[thick] (0,0) -- (3,0);

\foreach \x in {0,1,2,3}
{
\draw[fill] (\x,0) circle (3pt);
}

\node at (1.5,-1.75) {\scriptsize $D_{8,5}$};

\end{scope}

%%%%%%%%%%%%%%%%%%%%%%%%%%%%%%%%%%%%%%

%%%% B_{8,6}
\begin{scope}[shift={(18,2.25)}]

\foreach \x in {1,2}
{
\draw[thick] (0,0) -- (\x*360/3:1);
\draw[fill] (\x*360/3:1) circle (3pt);
}

\draw[thick] (0,0) -- (5,0);

\foreach \x in {0,1,2,3,4,5}
{
\draw[fill] (\x,0) circle (3pt);
}

\node at (2,-1.75) {\scriptsize $B_{8,6}$};

\end{scope}

%%%% D_{8,6}
\begin{scope}[shift={(18,-2.25)}]

\draw[thick] (-1,0) -- (5,0);
\draw[thick] (2,0) -- (2,-1);

\foreach \x in {-1,0,1,2,3,4,5}
{
\draw[fill] (\x,0) circle (3pt);
}

\draw[fill] (2,-1) circle (3pt);

\node at (2,-1.75) {\scriptsize $D_{8,6}'$};

\end{scope}

%%%%%%%%%%%%%%%%%%%%%%%%%%%%%%%%%%%%%%%

%%%% D_{8,7} = P_8
\begin{scope}[shift={(25,0)}]

\draw[thick] (-1,0) -- (6,0);

\foreach \x in {-1,0,1,2,3,4,5,6}
{
\draw[fill] (\x,0) circle (3pt);
}

\node at (2.5,-1.75) {\scriptsize $P_8$};

\end{scope}
    
\end{tikzpicture} 
\end{center}
    \caption{The meeting time extremal graphs for each of $\mathcal{T}_{8,d}$ for $2 \leq d \leq 7$. The upper row contains the maximizers and the bottom row contains the minimizers.}
    \label{fig:overview}
\end{figure}

\begin{theorem}
\label{thm:max-trees}
The extremal meeting times for $G \in \Tn$ are as follows. 
We have
$$
\min_{G \in \Tn} \Tmeet(G) = \Tmeet(S_n) = 2n - \frac{7}{2}.
$$
and
$$
\max_{G \in \Tn} \Tmeet(G) = \Tmeet(P_n) = \frac{4n^2-8n+3}{6}.
$$
Furthermore, the star $S_n$ and the path $P_n$ are the unique extremal trees for the meeting time.
\end{theorem}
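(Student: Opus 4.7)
The plan is to combine the per-diameter extremal characterizations (Theorems~\ref{thm:max-meet}, \ref{thm:min-meet-diam-3}, and \ref{thm:min-meet}) with the boundary cases of the diameter range. The only tree in $\Tn$ with diameter $2$ is the star $S_n$, and the only tree with diameter $n-1$ is the path $P_n$; moreover $P_n = B_{n,n-1}$, so the path already lies on the broom curve handled by Theorem~\ref{thm:max-meet}. Thus it suffices to (i) compute $\Tmeet(S_n)$ directly, (ii) show the broom formula is strictly increasing in $d$, and (iii) verify that every per-diameter minimum for $d \geq 3$ strictly exceeds $\Tmeet(S_n)$.

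For the maximum, I would prove strict monotonicity of the broom meeting time in $d$ by computing the discrete difference directly from Theorem~\ref{thm:max-meet}. After simplification, this collapses cleanly to
\[
\Tmeet(B_{n,d+1}) - \Tmeet(B_{n,d}) = \frac{2(n-d)(n-d-1)}{n-1},
\]
which is positive for $3 \leq d \leq n-2$. Hence $B_{n,n-1} = P_n$ is the unique maximizer among brooms, and a direct comparison of $\Tmeet(S_n) = 2n - \tfrac{7}{2}$ with $\Tmeet(B_{n,3})$ rules out the star as a candidate maximizer. The closed form for $\Tmeet(P_n)$ is then obtained as $\Tmeet(B_{n,n-1})$, which simplifies to $(4n^2 - 8n + 3)/6$ upon substitution.

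For the minimum, I first compute $\Tmeet(S_n)$ directly. Using the standard hitting times $H(c,\ell) = 2n-3$, $H(\ell, c) = 1$, and $H(\ell', \ell) = 2n-2$ for distinct leaves $\ell, \ell'$, together with $\pi_c = 1/2$ and $\pi_\ell = 1/(2(n-1))$, one obtains $H(\pi, c) = 1/2$ and $H(\pi, \ell) = 2n - \tfrac{7}{2}$, so $\Tmeet(S_n) = 2n - \tfrac{7}{2}$ is attained at any leaf. Next, for each $d \geq 3$ I would verify that the corresponding per-diameter minimum strictly exceeds $2n - \tfrac{7}{2}$: for $d = 3$ the gap is $\geq (n-1)/2 - 1/(2(n-1)) > 0$ by direct subtraction using Theorem~\ref{thm:min-meet-diam-3}, and for $d \geq 4$ the minimum formulas have leading $n$-coefficient $\tfrac{d+2}{2} \geq 3$, so the difference with $2n - \tfrac{7}{2}$ rearranges to $\tfrac{d-2}{2}(n-1)$ plus a manifestly nonnegative remainder involving $d(d-2)(d-4)$ or an analogous expression.

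The main obstacle is organizational rather than conceptual: four closed-form subcases arise in the minimum comparison (diameter $3$ with $n$ even or odd, and diameter $\geq 4$ with $n, d$ of the same or opposite parity), and each must be massaged into a form that is visibly positive. All computations use only the formulas already in hand; the only genuinely new identity needed is the clean discrete derivative of the broom curve displayed above, which does all of the work for the maximum.
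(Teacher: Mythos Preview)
Your proposal is correct and follows essentially the same strategy as the paper: reduce to the per-diameter extremizers via Theorems~\ref{thm:max-meet}, \ref{thm:min-meet-diam-3}, and \ref{thm:min-meet}, then optimize over $d$. The execution differs only in minor details. For the maximum, the paper invokes Corollary~\ref{cor:max-join-increasing} (proved by differentiating the broom formula in $d$), whereas your discrete difference $\Tmeet(B_{n,d+1}) - \Tmeet(B_{n,d}) = \tfrac{2(n-d)(n-d-1)}{n-1}$ is a cleaner and more transparent alternative. For the minimum, the paper establishes that the per-diameter minima increase in $d$ by computing $\Tmeet(D_{n,d+2}) - \Tmeet(D_{n,d})$ (stepping by $2$ to preserve parity), while you instead compare each per-diameter minimum directly against $\Tmeet(S_n)$; your factorization $\tfrac{d-2}{2}(n-1) + \tfrac{d(d-2)(d-4)}{6(n-1)}$ for the opposite-parity gap is exactly right and handles all $d \geq 4$ at once, so your route is no more laborious than the paper's despite the four subcases you flag.
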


%%%%%%%%%
%%%%%%%%%
%%%%%%%%%
\section{Preliminaries}

We present a formula for the hitting time $H(u,v)$ for a tree $G=(V,E)$, and introduce the joining time, which is the meeting time scaled by $2|E|$. 

\subsection{Hitting times for trees}

For a vertex subset $W \subseteq V$, we define
$$
\deg(W) = \sum_{w \in W} \deg(w).
$$
The \emph{return time} $\Ret(u)$ is the expected number of steps required for a random walk started at $u$ to first return to $u$. It is well-known (see Lov\'asz \cite{Lovasz1996}) that
\begin{equation}
\label{eqn:return-time}
\Ret(u) = \frac{2|E|}{\deg(u)} = \frac{\deg(V)}{\deg(u)}.
\end{equation}
Suppose that $(u,v) \in E$ is an edge in the tree.   Removing this edge breaks $G$ into two disjoint trees $G_1$ and $G_2$, where $u \in V(G_1)$ and $v \in V(G_2)$. We define $V_{u:v} = V(G_1)$ and $V_{v:u} = V(G_2)$. Equivalently, we have
\begin{equation}
\label{eqn:adjacent-set}
  V_{u:v} = \{w \in V: d(w,u) < d(w,v) \}
  \quad \mbox{and} \quad
  V_{v:u} = \{w \in V: d(w,v) < d(w,u) \}.
\end{equation}
For these adjacent vertices $u$ and $v$, let $F$ denote the induced tree on $V_{u:v} \cup \{   v \}$. We have
\begin{equation} 
\label{eqn:hit-adjacent}
		H(u,v)= \Ret_{F} (v) - H_F(v,u) =   \Ret_{F} (v) - 1 
       % = \sum_{w\in V_{u:v}}{\deg(w)} 
        = \deg(V_{u:v})= 2|V_{u:v}|-1,
\end{equation}
where the first equality holds by equation \eqref{eqn:return-time}.
In particular, when $z$ is a leaf and $y$ is its unique neighbor, we have
\begin{equation}
\label{eqn:hit-neighbor-to-leaf}
 H(y,z) 
 %= \sum_{w \in V - z} \deg(w) 
 = \deg(V_{y:z}) = \deg(V \backslash \{ z \}) 
 = 2|V \backslash \{ z \}|-1 
 = 2|E|-1.   
\end{equation}
A similar argument produces a formula for the hitting times between any two vertices, but first we need some additional notation. Let  $u,v,w \in V$. Recall that $d(u,v)$ is the distance between these two vertices, and define
$$
%\label{eqn:shared-distance}
\ell(u,v;w)=\frac{1}{2}(d(u,w)+d(v,w)-d(u,v))
$$
to be the length of the intersection of the $(u,w)$-path and the $(v,w)$-path. 

\begin{lemma}[Equation (2.6) of \cite{beveridge2009}]
\label{lemma:hit-time}
For any pair of vertices $u,v$, we have
\begin{equation} 
\label{eqn:hit-time}
H(u,v)=\sum_{w\in V}{\ell(u,w;v)\deg(w)}.
\end{equation}
\end{lemma}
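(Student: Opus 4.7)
The plan is to decompose the unique $(u,v)$-path in the tree edge by edge and then apply the single-edge hitting formula \eqref{eqn:hit-adjacent}. First I would let $u = x_0, x_1, \ldots, x_k = v$ denote this path, where $k = d(u,v)$. Because any walk from $u$ to $v$ in a tree must first reach $x_1$, then $x_2$, and so on, the strong Markov property applied at successive first-passage times gives $H(u,v) = \sum_{i=0}^{k-1} H(x_i, x_{i+1})$. Invoking \eqref{eqn:hit-adjacent}, each summand becomes $\deg(V_{x_i:x_{i+1}})$, and interchanging the order of summation rewrites the total as
\[
H(u,v) = \sum_{w \in V} c(w)\,\deg(w),
\]
where $c(w)$ counts the indices $i \in \{0, 1, \ldots, k-1\}$ with $w \in V_{x_i:x_{i+1}}$.

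The remaining task is to identify $c(w)$ with $\ell(u,w;v)$. I would introduce $x_j$, the unique vertex of the $(u,v)$-path closest to $w$; equivalently, $x_j$ is the point where the $(w,v)$-path first meets the $(u,v)$-path. Using the characterization \eqref{eqn:adjacent-set}, a short case analysis on whether $i<j$, $i=j$, or $i>j$ shows that $w \in V_{x_i:x_{i+1}}$ precisely when $j \leq i \leq k-1$, so $c(w) = k - j$. On the other hand, the $(u,v)$-path and the $(w,v)$-path share exactly the common suffix $x_j, x_{j+1}, \ldots, x_k = v$, of length $k-j$, and this is $\ell(u,w;v)$ by the geometric description in the text. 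A routine substitution of $d(u,w) = j + d(x_j,w)$, $d(w,v) = (k-j) + d(x_j,w)$, and $d(u,v) = k$ into $\tfrac{1}{2}(d(u,v)+d(w,v)-d(u,w))$ confirms agreement with the algebraic definition.

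I do not anticipate a real obstacle. The one step that requires any thought is recognizing $x_j$ as the Steiner point of the triple $\{u, v, w\}$, which makes both the combinatorial count $c(w) = k - j$ and the geometric identification with $\ell(u,w;v)$ transparent. Everything else is a clean interchange of summation built on the edge-hitting identity \eqref{eqn:hit-adjacent} already established in the preliminaries, and boundary cases such as $w=u$ or $w=v$ are verified automatically by the same formula.
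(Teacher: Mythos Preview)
Your argument is correct. The paper does not actually supply a proof of this lemma: it is quoted as Equation~(2.6) of \cite{beveridge2009} and followed only by a remark that equivalent formulas appear elsewhere in the literature. So there is no in-paper proof to compare against; your derivation from the edge-hitting identity \eqref{eqn:hit-adjacent} via the path decomposition $H(u,v)=\sum_i H(x_i,x_{i+1})$ and the Steiner-point identification $c(w)=k-j=\ell(u,w;v)$ is a clean, self-contained justification and exactly the kind of argument one would expect.
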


We note that equivalent variations on this tree hitting time formula are common in the literature (c.f.~\cite{DTW2003,GW2013},  and Chapter 5 of \cite{aldous+fill}).
Finally, formulas for the meeting times of the path and the star were calculated in \cite{BW2013}.

\begin{theorem}[Theorem 1.2 of \cite{BW2013}]
\label{thm:meet-path-star}
For the path $P_n$ on vertices $v_0, v_1, \dots, v_{n-1}$, we have
$$
\Tmeet(P_n) = H(\pi, v_{n-1}) = \frac{4n^2-8n+3}{6} = \frac{2(n-1)^2}{3} - \frac{1}{6}.
$$
For the star $S_n$ with center $v_0$ and leaf vertices $v_1, \ldots, v_{n-1}$, we have
$$
\Tmeet(S_n) = H(\pi,v_{n-1}) = 2n - \frac{7}{2}.
$$
\end{theorem}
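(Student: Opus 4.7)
The plan is to reduce the global extremal problem over $\Tn$ to a comparison across diameters, using Theorems \ref{thm:max-meet}, \ref{thm:min-meet-diam-3}, and \ref{thm:min-meet} to pin down the extremal graph for each fixed diameter $d \in \{2, 3, \ldots, n-1\}$, and Theorem \ref{thm:meet-path-star} to identify the final values. The star $S_n$ is the unique tree in $\Tnd$ for $d=2$, and the path $P_n$ is the unique tree in $\Tnd$ for $d=n-1$; for all intermediate $d$ I will show strict inequalities that sandwich $\Tmeet(G)$ strictly between $\Tmeet(S_n)$ and $\Tmeet(P_n)$.

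For the maximum, by Theorem \ref{thm:max-meet} it suffices to show that the function
\[
f(d) := \Tmeet(\Bnd) = 2(d-1) n + \frac{2d^3-6d^2+4d}{3(n-1)} - 2d^2 + 2d + \frac{1}{2}
\]
is strictly increasing in $d$ for $3 \leq d \leq n-1$, and that $f(3) > \Tmeet(S_n)$. The key step is to compute the forward difference: after expanding, the cubic and quadratic contributions collapse to give
\[
f(d+1) - f(d) = 2n - 4d + \frac{2d(d-1)}{n-1} = \frac{2(n-d)(n-d-1)}{n-1},
\]
which is strictly positive whenever $d \leq n-2$. Hence $f$ is strictly increasing on $\{3, \ldots, n-1\}$, so the maximum of $\Tmeet$ over $\Tn$ with $d \geq 3$ is attained uniquely at $B_{n,n-1} = P_n$, with value $\Tmeet(P_n) = (4n^2 - 8n + 3)/6$ by Theorem \ref{thm:meet-path-star}. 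A direct comparison confirms that $f(3) = 4n + \tfrac{4}{n-1} - \tfrac{23}{2}$ exceeds $\Tmeet(S_n) = 2n - \tfrac{7}{2}$ for $n \geq 4$, which rules out the $d=2$ case.

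For the minimum, Theorems \ref{thm:min-meet-diam-3} and \ref{thm:min-meet} reduce the problem to showing that $\Tmeet(\Dnd)$, $\Tmeet(\Dnd')$, and $\Tmeet(\double{n}{3})$ all strictly exceed $\Tmeet(S_n) = 2n - \tfrac{7}{2}$ for every admissible $d \in \{3, 4, \ldots, n-1\}$. For $d = 3$, I simply verify that both formulas in Theorem \ref{thm:min-meet-diam-3} exceed $2n - \tfrac{7}{2}$, which reduces to $(n-1)^2 \geq 1$. For $d \geq 4$ and the balanced double broom $\Dnd$ (opposite parities), subtract $2n - \tfrac{7}{2}$ from the expression in \eqref{eqn:min-meet-opposite-parity} and factor using $d^3 - 6d^2 + 8d = d(d-2)(d-4)$; the difference becomes
\[
(d-2)\left[\frac{n-1}{2} + \frac{d(d-4)}{6(n-1)}\right],
\]
which is strictly positive since $d \geq 4$. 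The same factoring strategy handles the near double broom case \eqref{eqn:min-meet-same-parity}, using $d^3 - 3d^2 - d + 6$, which I will factor into a product that stays positive for $d \geq 4$. Combining these bounds with uniqueness of $S_n$ in $\tree{n}{2}$ yields that $S_n$ is the unique global minimizer.

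The main technical obstacle is the algebraic identity $f(d+1) - f(d) = \frac{2(n-d)(n-d-1)}{n-1}$, since a naive derivative in $d$ is not obviously signed across the whole range, and its positivity really does depend on the interaction between the cubic-in-$d$ and linear-in-$d$ terms; the forward-difference calculation is what makes the monotonicity transparent. The minimum-side comparisons, by contrast, are purely a matter of factoring the small polynomials in $d$ appearing in Theorems \ref{thm:min-meet-diam-3} and \ref{thm:min-meet} and observing that each factor has the correct sign for $d \geq 3$.
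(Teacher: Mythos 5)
There is a genuine gap: your proposal does not prove the statement at hand. The statement to be established is the pair of closed-form evaluations $\Tmeet(P_n)=H(\pi,v_{n-1})=\tfrac{4n^2-8n+3}{6}$ and $\Tmeet(S_n)=H(\pi,v_{n-1})=2n-\tfrac72$ (in the paper this is quoted from Theorem 1.2 of \cite{BW2013}; it is an input to the later arguments, not a consequence of them). What you prove instead is the extremal result of Theorem \ref{thm:max-trees} --- that $S_n$ minimizes and $P_n$ maximizes $\Tmeet$ over $\Tn$ --- and you explicitly invoke Theorem \ref{thm:meet-path-star} ``to identify the final values.'' As a proof of Theorem \ref{thm:meet-path-star} this is circular: extremality of $P_n$ and $S_n$ among trees says nothing about what the numbers $\Tmeet(P_n)$ and $\Tmeet(S_n)$ are, and nowhere do you compute $H(\pi,w)$ for either graph or show that the maximizing target $w$ is $v_{n-1}$ (an endpoint of the path, resp.\ a leaf of the star), which is part of the claim. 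The circularity is compounded by the fact that the paper's own machinery you lean on (e.g.\ Lemma \ref{lemma:broomify}, Case 2, hence Theorem \ref{thm:max-meet}) itself uses the path formula of Theorem \ref{thm:meet-path-star}.

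What is actually needed is a direct computation. For $S_n$: $H(u,v_0)=1$ for each leaf $u$, $H(v_0,v_j)=2n-3$ by equation \eqref{eqn:hit-neighbor-to-leaf}, so $H(u,v_j)=2n-2$ for distinct leaves $u\neq v_j$; weighting by $\pi$ gives $H(\pi,v_j)=\tfrac12(2n-3)+\tfrac{n-2}{2(n-1)}(2n-2)=2n-\tfrac72$ for any leaf $v_j$, while $H(\pi,v_0)=\tfrac12$, so the maximum is at a leaf. For $P_n$: use Lemma \ref{lemma:hit-time} (or the adjacent-vertex formula \eqref{eqn:hit-adjacent}) to get $H(v_k,v_{n-1})=(n-1)^2-k^2$... more precisely sum $H(v_j,v_{j+1})=2j+1$, then evaluate $J(v_{n-1})=\sum_k \deg(v_k)H(v_k,v_{n-1})$ and divide by $2(n-1)$, together with the observation (Corollary \ref{cor:leaf-is-max} plus symmetry, or Lemma \ref{lemma:adjacent-join-times}) that an endpoint maximizes $H(\pi,\cdot)$. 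Alternatively, and non-circularly within this paper, specialize the self-contained broom computation of Lemma \ref{lemma:broom-join-max}/Corollary \ref{cor:broom-pi-to-leaf} to $d=n-1$ and $d=2$, since $B_{n,n-1}=P_n$ and $B_{n,2}=S_n$. Your forward-difference identity $f(d+1)-f(d)=\tfrac{2(n-d)(n-d-1)}{n-1}$ and the factorization $(d-2)\bigl[\tfrac{n-1}{2}+\tfrac{d(d-4)}{6(n-1)}\bigr]$ are algebraically correct and would be relevant to Theorem \ref{thm:max-trees}, but they do not address the statement you were asked to prove.
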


\subsection{The joining time}

Throughout this work, it will be convenient to scale meeting times by $2|E|=2(n-1)$. This clears the denominator but does not further impact our arguments, since we will always fix the order of the graphs under consideration. With that in mind, we make the following definitions.

\begin{definition}
Let $G=(V,E)$ be a graph. For a vertex $v\in V$,
the \emph{joining time to} $v$ is
\begin{equation}
\label{eqn:scaled-pi-to-vertex}
J(v) = 2|E| H(\pi,v)  = \sum_{u \in V} \deg(u) H(u,v).
\end{equation}
The \emph{maximum joining time of $G$} is
$$
\Jmax(G) = \max_{v \in V} J(v) = 2|E| \, \Tmeet(G).
$$
Given a subset $S \subseteq V$, and a vertex $v \in V$ the \emph{joining time from $S$ to $v$} is
\begin{equation}
\label{eqn:join-set-to-vertex}
J(S,v) = 2|E| \sum_{s \in S} \pi_s H(s,v) = \sum_{s \in S} \deg(s) H(s,v).
\end{equation}
\end{definition}

Sometimes it is convenient to combine equations \eqref{eqn:hit-time} and \eqref{eqn:scaled-pi-to-vertex} and write
\begin{equation}
\label{eqn:join-to-vertex-2}   
J(w) = \sum_{u \in V} \sum_{v \in V} \ell(u,v;w) \deg(u) \deg(v).
\end{equation}

%%%%%%%%%
%%%%%%%%%
%%%%%%%%%
\section{Maximizing the meeting time}

In this section, we prove Theorem \ref{thm:max-meet}, that $\max_{G \in \Tnd} \Tmeet(G)$ is achieved uniquely by the broom graph $\Bnd$.
We start with three useful results about joining times. First we relate the joining time of a leaf with the meeting time of its neighbor.

\begin{lemma}
    \label{lemma:unif-to-leaf}
    For any tree $G$ on $n \geq 2$ vertices, and leaf $z \in V$ with unique neighbor $y \in V$, we have
$$
J(z) 
%= J(y) + 4|E|^2 - 4|E|
= J(y) + 4(n^2-3n+2).
$$
\end{lemma}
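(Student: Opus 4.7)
The plan is to exploit the tree structure: since $z$ is a leaf with unique neighbor $y$, every walk from any vertex $u \neq z$ to $z$ must first reach $y$. This gives the clean decomposition
\begin{equation*}
H(u,z) = H(u,y) + H(y,z) \qquad \text{for all } u \in V \setminus \{z\}.
\end{equation*}
For the single exceptional vertex $u = z$, we have $H(z,z) = 0$ while $H(z,y) = 1$ (from $z$, the walk has no option but to step to $y$). So the computation reduces to a single leaf-to-neighbor hitting time and a single term of arithmetic bookkeeping.

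Next I would compute $H(y,z)$. By equation \eqref{eqn:hit-neighbor-to-leaf}, $H(y,z) = 2|E| - 1 = 2n - 3$, using $|E| = n-1$ for a tree.

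Now I would substitute into the definition $J(v) = \sum_{u \in V} \deg(u) H(u,v)$ and take the difference:
\begin{align*}
J(z) - J(y)
&= \sum_{u \in V} \deg(u)\bigl[H(u,z) - H(u,y)\bigr] \\
&= \sum_{u \neq z} \deg(u)\,(2n-3) + \deg(z)\bigl[H(z,z) - H(z,y)\bigr] \\
&= (2n-3)\bigl(2|E| - \deg(z)\bigr) + 1 \cdot (0 - 1) \\
&= (2n-3)(2n-3) - 1 \\
&= 4n^2 - 12n + 8 \;=\; 4(n^2 - 3n + 2),
\end{align*}
which is the claimed identity.

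There is essentially no obstacle here beyond bookkeeping. The only subtlety is remembering to treat the $u = z$ term separately, since the general decomposition $H(u,z) = H(u,y) + H(y,z)$ fails at $u = z$; missing this would produce an off-by-one error. The fact that $\deg(z) = 1$ is what makes the single correction term $-1$ combine perfectly with $(2n-3)\cdot(2|E|-1)$ to give the telescoping $(2n-3)^2 - 1$.
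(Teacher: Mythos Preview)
Your proof is correct and follows essentially the same approach as the paper: both arguments use the decomposition $H(u,z)=H(u,y)+H(y,z)$ for $u\neq z$, invoke $H(y,z)=2|E|-1$ from equation~\eqref{eqn:hit-neighbor-to-leaf}, and handle the single $u=z$ term separately to arrive at $(2|E|-1)^2-1=4(n^2-3n+2)$. The only difference is cosmetic organization of the sum.
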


\begin{proof}
By equation \eqref{eqn:hit-neighbor-to-leaf} and $|E|=n-1$, we have
\begin{align*}
\J(z) 
&= 
\J(y) - \deg(z) H(z,y) + \sum_{u \in V - z} \deg(u) H(y,z) \\
&=
\J(y) - 1 + (2|E|-1)^2 
= 
\J(y) + 4 |E|^2 - 4|E| = \J(y) + 4(n^2-3n+2),
\end{align*}
as desired.
\end{proof}

Next, we compare the joining time of adjacent vertices. 

\begin{lemma}\label{lemma:adjacent-join-times}
    Let $G\in \Tnd$ be a tree with adjacent vertices $u,v$.
    Then  $J(u)>J(v)$ if and only if $\deg(V_{u:v}) < \deg(V_{v:u})$.
\end{lemma}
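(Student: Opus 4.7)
The plan is to express $J(u) - J(v)$ as a single explicit quantity whose sign is manifestly determined by $\deg(V_{u:v})$ versus $\deg(V_{v:u})$. Starting from the definition \eqref{eqn:scaled-pi-to-vertex}, I would write
\[
J(u) - J(v) = \sum_{x \in V} \deg(x)\,\bigl(H(x,u) - H(x,v)\bigr),
\]
and split the sum according to the partition $V = V_{u:v} \sqcup V_{v:u}$ induced by removing the edge $uv$.

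The key observation is that, because $G$ is a tree, hitting times are additive along the unique path between two vertices. Concretely, for any $x \in V_{u:v}$ the unique $x$-to-$v$ path passes through $u$, so by the strong Markov property $H(x,v) = H(x,u) + H(u,v)$, and hence $H(x,u) - H(x,v) = -H(u,v)$. Symmetrically, for $x \in V_{v:u}$ we get $H(x,u) - H(x,v) = H(v,u)$. Summing degrees on each side then yields
\[
J(u) - J(v) = -H(u,v)\,\deg(V_{u:v}) + H(v,u)\,\deg(V_{v:u}).
\]

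Now I would invoke the tree hitting-time formula for adjacent vertices, equation \eqref{eqn:hit-adjacent}, which gives $H(u,v) = \deg(V_{u:v})$ and $H(v,u) = \deg(V_{v:u})$. Substituting produces a difference of squares:
\[
J(u) - J(v) = \deg(V_{v:u})^2 - \deg(V_{u:v})^2 = \bigl(\deg(V_{v:u}) - \deg(V_{u:v})\bigr)\bigl(\deg(V_{v:u}) + \deg(V_{u:v})\bigr).
\]
Since $\deg(V_{u:v}) + \deg(V_{v:u}) = \deg(V) = 2|E| > 0$, the sign of $J(u) - J(v)$ equals the sign of $\deg(V_{v:u}) - \deg(V_{u:v})$, which gives the claimed equivalence.

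I do not anticipate a real obstacle here; the only subtlety is justifying the additivity $H(x,v) = H(x,u) + H(u,v)$ for $x \in V_{u:v}$, which is a standard consequence of the fact that any walk from $x$ to $v$ in a tree must pass through the cut vertex $u$. Everything else is bookkeeping together with two direct applications of earlier identities in the paper.
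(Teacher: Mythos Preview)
Your proposal is correct and is essentially the same argument as the paper's: both split the sum over sources by the partition $V = V_{u:v} \sqcup V_{v:u}$, use path-additivity of hitting times in a tree to reduce the cross terms to $\deg(V_{v:u})H(v,u)$ and $\deg(V_{u:v})H(u,v)$, and then apply \eqref{eqn:hit-adjacent} to obtain $J(u)-J(v)=\deg(V_{v:u})^2-\deg(V_{u:v})^2$. The only cosmetic difference is that the paper writes out $J(u)$ and $J(v)$ separately (so the common piece $J(V_{u:v},u)+J(V_{v:u},v)$ visibly cancels), whereas you compute the difference directly.
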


\begin{proof}
By equations \eqref{eqn:scaled-pi-to-vertex} and \eqref{eqn:hit-adjacent}, we have 
$$
J(u) = J(V_{u:v},u) + J(V_{v:u},v) + \deg(V_{v:u}) H(v,u) 
= J(V_{u:v},u) + J(V_{v:u},v) + \deg(V_{v:u})^2 
$$
while
$$
J(v) = J(V_{v:u},v) + J(V_{u:v},u) + \deg(V_{u:v}) H(u,v) 
= J(V_{v:u},v) + J(V_{u:v},u) + \deg(V_{u:v})^2.
$$
Therefore $J(u) > J(v)$ if and only if $\deg(V_{v:u}) > \deg(V_{u:v})$.
\end{proof}

Finally, we can show that a leaf always maximizes the joining time.

\begin{cor} \label{cor:leaf-is-max}
    For any tree $G\in \Tnd$, the vertex $z$ that attains $\max_{v\in V} J(v)$ is a leaf.
\end{cor}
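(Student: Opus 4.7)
The plan is to prove the contrapositive: any internal (non-leaf) vertex $z$ is strictly beaten in joining time by at least one of its neighbors, hence cannot be the maximizer. The only tool needed beyond Lemma \ref{lemma:adjacent-join-times} is a one-line averaging argument on the branch-degree sums around $z$.

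First I would suppose $z \in V$ is not a leaf, so $\deg(z) = k \geq 2$. Let $y_1, \ldots, y_k$ denote its neighbors. Deleting $z$ from $G$ decomposes the tree into $k$ components, namely the vertex sets $V_{y_1:z}, \ldots, V_{y_k:z}$ as defined in equation \eqref{eqn:adjacent-set}. Setting $a_i = \deg(V_{y_i:z})$, the degrees of $V\setminus\{z\}$ partition as
\begin{equation*}
\sum_{i=1}^{k} a_i \;=\; \deg(V) - \deg(z) \;=\; \deg(V) - k.
\end{equation*}

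Next I would apply averaging: some index $i$ satisfies $a_i \leq (\deg(V)-k)/k$. For $k \geq 2$ the inequality
\begin{equation*}
\frac{\deg(V)-k}{k} \;<\; \frac{\deg(V)}{2}
\end{equation*}
is equivalent to $\deg(V)(2-k) < 2k$, which holds trivially since the left side is nonpositive while the right side is positive. Therefore $a_i < \deg(V)/2$, which means
\begin{equation*}
\deg(V_{y_i:z}) \;=\; a_i \;<\; \deg(V) - a_i \;=\; \deg(V_{z:y_i}).
\end{equation*}

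Finally, applying Lemma \ref{lemma:adjacent-join-times} to the adjacent pair $(y_i, z)$ yields $J(y_i) > J(z)$, so $z$ is not a maximizer of $J$. Taking the contrapositive, any vertex achieving $\max_{v\in V} J(v)$ must be a leaf. The only potential obstacle is making sure the pigeonhole yields a \emph{strict} inequality, but this is immediate because the average itself is already strictly below $\deg(V)/2$ when $k\geq 2$; so the argument is essentially just averaging plus the preceding lemma, with no case analysis required.
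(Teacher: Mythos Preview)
Your proof is correct and follows essentially the same approach as the paper: both argue the contrapositive, pick a neighbor $y_i$ whose branch $V_{y_i:z}$ has degree-sum at most the average (the paper phrases this as choosing the minimum), deduce $\deg(V_{y_i:z}) < \deg(V)/2 < \deg(V_{z:y_i})$, and invoke Lemma~\ref{lemma:adjacent-join-times}. The only cosmetic difference is that you spell out the inequality $(\deg(V)-k)/k < \deg(V)/2$ explicitly, whereas the paper leaves it implicit.
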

\begin{proof}
    We prove the contrapositive. Suppose that $\deg(z) = k \geq 2.$ Let $\{y_1,y_2,\ldots,y_k\}\subseteq V$ be the set of neighbors of $z$. 
Without loss of generality, let $\deg(V_{y_1:z})=\min_i \deg(V_{y_i:z})$. Since $\sum_i \deg(V_{y_i:z})=\deg(V)-\deg(z)$, we know that $\deg(V_{y_1:z}) < \deg(V)/2$. Therefore $\deg(V_{y_1:z}) < \deg(V_{z:y_1})$, and hence  $J(z)<J(y_1)$ by Lemma \ref{lemma:adjacent-join-times}.
\end{proof}

\subsection{The meeting time for a broom}

Recall that $\Bnd$ is the broom on $n$ vertices with diameter $d$. Its handle consists of vertices $v_1, v_2, \ldots, v_d$ and its bristles $u_1, \ldots, u_{n-d}$ are adjacent to $v_1$. For convenience, we also set $v_0 = u_1$. We calculate $\Jmax(\Bnd)$ and $\Tmeet(\Bnd)$.

\begin{lemma}
\label{lemma:broom-join-max}
Consider the broom graph $\Bnd$, where $d \geq 2$.
Then
\begin{equation}
\label{eqn:join-broom-tip}    
\Jmax(\Bnd) = J(v_d) =  
4 (d-1) n^2  +(5  -4 d^2) n+ \frac{4 d^3 -4 d-3}{3}.
\end{equation}
\end{lemma}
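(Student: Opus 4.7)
The plan is to first identify $v_d$ as a maximizer of $J$ on $\Bnd$, and then compute its value directly.

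By Corollary \ref{cor:leaf-is-max}, the maximum of $J$ on $V$ is attained at a leaf. The leaves of $\Bnd$ are $v_d$ together with the bristles $u_1, \ldots, u_{n-d}$. Applying Lemma \ref{lemma:unif-to-leaf} at each of these leaves gives
$$
J(v_d) = J(v_{d-1}) + 4(n-1)(n-2)
\qquad \text{and} \qquad
J(u_i) = J(v_1) + 4(n-1)(n-2),
$$
so comparing leaves reduces to comparing $J(v_{d-1})$ and $J(v_1)$ along the handle.

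Next I would telescope along the handle using Lemma \ref{lemma:adjacent-join-times}. For each edge $(v_k, v_{k+1})$ with $1 \leq k \leq d-1$, the sets $V_{v_k:v_{k+1}} = \{v_1, \ldots, v_k, u_1, \ldots, u_{n-d}\}$ and $V_{v_{k+1}:v_k} = \{v_{k+1}, \ldots, v_d\}$ have degree sums $2(n-d+k)-1$ and $2(d-k)-1$ respectively by \eqref{eqn:hit-adjacent}. The computation inside the proof of Lemma \ref{lemma:adjacent-join-times} gives the exact difference
$$
J(v_{k+1}) - J(v_k) = \deg(V_{v_k:v_{k+1}})^2 - \deg(V_{v_{k+1}:v_k})^2 = 4(n-1)(n + 2k - 2d).
$$
Summing for $k=1,\ldots,d-2$ yields $J(v_{d-1}) - J(v_1) = 4(n-1)(d-2)(n-d-1) \geq 0$ for $d < n$, so $J(v_d) \geq J(u_i)$ for every bristle, confirming $\Jmax(\Bnd) = J(v_d)$.

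For the explicit value I would compute $J(v_d) = \sum_{u \in V} \deg(u) H(u, v_d)$ directly. Iterating \eqref{eqn:hit-adjacent} along the handle yields
$$
H(v_k, v_d) = \sum_{j=k}^{d-1} \bigl(2(n-d+j)-1\bigr) = (d-k)(2n - d + k - 2),
$$
and since each bristle $u_i$ is a leaf adjacent to $v_1$, we have $H(u_i, v_d) = 1 + H(v_1, v_d) = 1 + (d-1)(2n-d-1)$. Since $\deg(v_1) = n-d+1$, $\deg(v_k) = 2$ for $2 \leq k \leq d-1$, and all remaining vertices are leaves with $\deg = 1$, this gives
$$
J(v_d) = (n-d+1)(d-1)(2n-d-1) + 2\!\!\sum_{k=2}^{d-1}(d-k)(2n-d+k-2) + (n-d)\bigl[1 + (d-1)(2n-d-1)\bigr].
$$
The main obstacle is then the purely algebraic simplification: evaluating the middle sum via the substitution $j = d-k$ together with the identities $\sum_{j=1}^{d-2} j = \tfrac{(d-2)(d-1)}{2}$ and $\sum_{j=1}^{d-2} j^2 = \tfrac{(d-2)(d-1)(2d-3)}{6}$, expanding the remaining products, and collecting by degree in $n$. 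Matching with the claimed polynomial $4(d-1)n^2 + (5-4d^2)n + \tfrac{4d^3 - 4d - 3}{3}$ is routine but error-prone, and would be corroborated by checking the boundary case $d=2$ against $\Jmax(S_n) = (n-1)(4n-7)$ from Theorem \ref{thm:meet-path-star}.
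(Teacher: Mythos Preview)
Your proposal is correct and follows essentially the same approach as the paper: identify the maximizing leaf via Corollary~\ref{cor:leaf-is-max}, compute the hitting times $H(v_k,v_d)$ by telescoping \eqref{eqn:hit-adjacent} along the handle, and sum $\deg(v)H(v,v_d)$ over $V$. The one notable difference is that where the paper simply asserts it is ``clear from equation~\eqref{eqn:join-to-vertex-2} and the structure of the broom'' that $v_d$ is the maximizing leaf, you supply an explicit argument via Lemma~\ref{lemma:unif-to-leaf} and the telescoping identity $J(v_{k+1})-J(v_k)=4(n-1)(n+2k-2d)$ drawn from the proof of Lemma~\ref{lemma:adjacent-join-times}; this is a genuine improvement in rigor but not a different method.
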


\begin{proof}
    Let $G = B_{n, d}$ with handle $v_1, \ldots, v_d$ and bristles
  $v_0=u_{1}, u_{2}, \ldots, u_{n-d}$.
By Corollary \ref{cor:leaf-is-max}, we know that $\Jmax(\Bnd)$ is achieved at a leaf. It is clear from equation \eqref{eqn:join-to-vertex-2} and the structure of the broom that the maximizing target vertex is the broom tip $v_d$.
  
    Using formula \eqref{eqn:hit-adjacent}, we calculate the hitting times between adjacent vertices in our graph:
\begin{align*}
H(v_0, v_1) = H(u_i,v_1) &=  1  &\mbox{for } 1 \leq i \leq n-d, \\
H(v_j, v_{j+1}) &= 2(n-d) + 2(j-1) + 1  &\mbox{for } 1 \leq j \leq d-1.
\end{align*}
Therefore
\begin{align*}
H(v_0, v_k) &= \sum_{j=0}^{k-1} H(v_j, v_{j+1})
= k + 2(k-1)(n-d) + 2 \sum_{j=1}^{k-1} (j-1) \\
&=
k + 2(k-1)(n-d) + (k-2)(k-1).
\end{align*}
In particular,
$$
H(v_0,v_d) = 2 d n-2 n -d^2+2,
$$
and for $1 \leq k \leq d-1$, we have
$$
H(v_k, v_d) = H(v_0,v_d) - H(v_0,v_k) = (d - k) (2n+k-d-2).
$$
 We use these formulas to find that
    \begin{align*}
        \J(v_d) &= \sum_{v_k \in V} \deg(v_k) H(v_k, v_d) \\
         &= 
         (n-d)  H(v_0, v_d) + (n - d + 1) H(v_1, v_d)+ 2\sum_{k=1}^{d-1} H(v_k, v_d)  \\
         &= \begin{multlined}[t]
          (n-d) (2 d n-2 n -d^2+2) 
         + (n - d + 1) (d - 1) (2n-d-1) \\
         + 2\sum_{k=2}^{d-1} (d - k) (2n+k-d-2)
         \end{multlined}\\
         &= 4 (d-1) n^2  +(5  -4 d^2) n+ \frac{4 d^3 -4 d-3}{3}
    \end{align*}
    where the final simplification can be validated via mathematical software like Mathematica or Sage.
 \end{proof}

As one would expect, brooms with larger handles (and the same order) have larger joining times.

\begin{cor}
\label{cor:max-join-increasing}
For a fixed order $n$, the maximum joining time
$\Jmax(\Bnd)$ is strictly increasing for $2 \leq d \leq n-1$.
\end{cor}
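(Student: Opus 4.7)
The plan is to exploit the explicit closed form for $\Jmax(\Bnd)$ from Lemma \ref{lemma:broom-join-max} and verify strict monotonicity by a direct finite-difference computation. Write
$$
f(d) \;=\; \Jmax(\Bnd) \;=\; 4(d-1)n^2 + (5-4d^2)n + \frac{4d^3 - 4d - 3}{3},
$$
and observe that the claim ``$\Jmax(\Bnd)$ is strictly increasing for $2 \leq d \leq n-1$'' is equivalent to the inequality $f(d+1) - f(d) > 0$ for $2 \leq d \leq n-2$.

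Next, compute $f(d+1) - f(d)$ term by term. The $n^2$ contribution gives $4n^2$, the $n$ contribution gives $-4(2d+1)n$, and the constant-in-$n$ contribution gives $\tfrac{1}{3}\bigl(4(3d^2 + 3d + 1) - 4\bigr) = 4d^2 + 4d$. After collecting terms and factoring, I expect this to telescope cleanly to
$$
f(d+1) - f(d) \;=\; 4\bigl((n-d)^2 - (n-d)\bigr) \;=\; 4(n-d)(n-d-1).
$$
For $2 \leq d \leq n-2$ we have $n-d \geq 2$ and $n-d-1 \geq 1$, so the right side is strictly positive, giving the desired strict monotonicity.

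There is no real obstacle here beyond routine algebra; the main thing to be careful about is getting the factoring right and confirming the index range $2 \leq d \leq n-2$ matches the stated range $2 \leq d \leq n-1$ for the sequence itself. As a sanity check, one may note the combinatorial flavor of the factorization $4(n-d)(n-d-1)$: moving from $\Bnd$ to $B_{n,d+1}$ converts one bristle into a handle extension, and the gain involves ordered pairs of the remaining $n-d$ bristles, consistent with the doubly-indexed sum appearing in \eqref{eqn:join-to-vertex-2}.
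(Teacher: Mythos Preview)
Your proof is correct. The algebra checks out: $f(d+1)-f(d)=4(n-d)(n-d-1)$, and this is strictly positive on the required range.

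The paper takes a slightly different route: it treats $d$ as a continuous variable, differentiates the formula \eqref{eqn:join-broom-tip} to obtain $4d^2-8dn+4n^2-\tfrac{4}{3}$, locates the critical points $n\pm\sqrt{3}/3$, and observes that the derivative is positive throughout $2\leq d\leq n-1$. Your finite-difference computation is arguably cleaner for what is, after all, an integer parameter: it avoids calculus entirely and yields the exact increment $4(n-d)(n-d-1)$, whose factored form even admits the combinatorial reading you mention. The derivative approach is marginally quicker to write down but requires the (easy) additional observation that positivity of the derivative on a real interval implies monotonicity at the integer points.
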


\begin{proof}
The derivative, with respect to $d$, of equation \eqref{eqn:join-broom-tip} is
$$
4 d^2-8 d n+4 n^2 - \frac{4}{3}
$$
which has critical points
$n \pm \sqrt{3}/3$ and is positive for $2 \leq d \leq n-1$.
\end{proof}

We conclude this subsection with the formula for the meeting time of the broom.

\begin{cor}
\label{cor:broom-pi-to-leaf}
The meeting time of the broom graph is 
$$
\Tmeet(\Bnd) 
=
(2 d-2) n  + \frac{2d^3-6d^2+4d}{3(n-1)} -2 d^2+2 d+\frac{1}{2}.
$$
\end{cor}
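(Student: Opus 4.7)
The proof will be essentially a direct computation, applying the definition of the joining time to convert the formula from Lemma 3.4 into a formula for the meeting time. By definition, $J(v) = 2|E|\,H(\pi,v)$, and since the broom $\Bnd$ is a tree we have $|E| = n-1$. Taking the maximum over $v \in V$ on both sides gives $\Jmax(\Bnd) = 2(n-1)\,\Tmeet(\Bnd)$, so the plan is simply to compute
\[
\Tmeet(\Bnd) \;=\; \frac{\Jmax(\Bnd)}{2(n-1)} \;=\; \frac{4(d-1)n^2 + (5 - 4d^2)n + \tfrac{4d^3 - 4d - 3}{3}}{2(n-1)},
\]
using equation \eqref{eqn:join-broom-tip} of Lemma \ref{lemma:broom-join-max}.

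The next step is to carry out the polynomial long division of the numerator by $n-1$, keeping the remainder term as a separate fraction. I would treat $4(d-1)n^2 + (5-4d^2)n$ as the ``main'' polynomial piece in $n$ and handle the constant-in-$n$ term $\tfrac{4d^3-4d-3}{3}$ at the end. Polynomial division of $4(d-1)n^2 + (5-4d^2)n$ by $n-1$ yields a linear polynomial in $n$ together with a constant remainder; I would then recombine this remainder with the $\tfrac{4d^3-4d-3}{3}$ term to form the single fractional term $\tfrac{2d^3-6d^2+4d}{3(n-1)}$ that appears in the target expression. After dividing by the outer factor of $2$, the quotient pieces give exactly $(2d-2)n$ and $-2d^2 + 2d + \tfrac{1}{2}$.

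There is no real obstacle here beyond bookkeeping: the identity is a polynomial identity in $n$ and $d$ that can be verified by clearing denominators and expanding. If desired, the entire check can be written as the single algebraic verification
\[
2(n-1)\!\left[(2d-2)n + \tfrac{2d^3-6d^2+4d}{3(n-1)} - 2d^2 + 2d + \tfrac{1}{2}\right] = 4(d-1)n^2 + (5-4d^2)n + \tfrac{4d^3-4d-3}{3},
\]
which reduces to matching coefficients of $n^2$, $n$, and the constant term, and can also be confirmed with a computer algebra system as was done for Lemma \ref{lemma:broom-join-max}. Since the equality of the two sides establishes $\Tmeet(\Bnd) = \Jmax(\Bnd)/(2(n-1))$, this completes the proof.
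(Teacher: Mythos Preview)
Your proposal is correct and follows exactly the same approach as the paper: divide the expression for $\Jmax(\Bnd)$ from Lemma~\ref{lemma:broom-join-max} by $2|E|=2(n-1)$ and simplify. The paper's proof is the one-line version of your more detailed algebraic verification.
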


\begin{proof}
Divide equation \eqref{eqn:join-broom-tip} by $2|E|=2(n-1)$ and simplify.
\end{proof}

We make some observations about meeting times for brooms.  We have $\broom{n}{n-1} = P_n$, and in this case the formula simplifies to 
$$
\Tmeet(P_n) = \frac{4n^2 - 8n +3}{6}.
$$
Meanwhile, we have $\broom{n}{2} = S_n$, and the formula becomes
$$
\Tmeet(S_n) = 2n - \frac{7}{2}.
$$
These two values match the formulas of Theorem \ref{thm:meet-path-star}. 
More generally, it is instructive to consider a sequence of brooms $\{ \broom{n}{d(n)} \}$ for increasing values of $n$, where $d=d(n)$ is a function of $n$. Using asymptotic notation, we have
$$
\Tmeet(\broom{n}{d(n)}) = 
\begin{cases}
    (2d-2) n + o(n) & \mbox{when } d \mbox{ is constant}, \\
     2n d(n) + o(n \, d(n)) & \mbox{when } \omega(1) =  d(n) = o(n) \mbox{ is sublinear}, \\
     \frac{6c-6c^2+2c^3}{3} n^2  + O(n) & \mbox{when } d(n) = cn + o(n) \mbox{ where } 0 < c \leq 1. \\
\end{cases}
$$

\subsection{Proof of Theorem \ref{thm:max-meet}}

We prove two lemmas and then prove Theorem \ref{thm:max-meet}.
It will be fruitful to consider a particular family of rooted trees en route to proving the theorem about trees with diameter $d$. 

\begin{definition}
Let $\Anr$ be the set of rooted trees $(G,z)$, where $z \in V(G)$, on $n$ vertices such that 
$r = \max_{v \in V} d(v,z)$. 
\end{definition}

\begin{lemma}
\label{lemma:pi-to-leaf-r}
Suppose that $(G,z) \in \Anr$ achieves $\max_{(G,z) \in \Anr}  \J(z)$. Let $G_1, G_2, \ldots, G_k$ be the components of $G-z$.
Then for $1 \leq i \leq d$, we have
$\max_{v \in V(G_i)} d(v, z)=r$. 
\end{lemma}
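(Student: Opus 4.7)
Assume for contradiction that $(G, z) \in \Anr$ maximizes $J(z)$ but some branch $G_i$ has $d_i := \max_{v \in V(G_i)} d(v, z) < r$. Let $y_i$ denote the unique neighbor of $z$ in $G_i$, and let $n_i = |V(G_i)|$. The plan is to construct $G^* \in \Anr$ with $J_{G^*}(z) > J_G(z)$, contradicting maximality.

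The operation is to regraft the entire deficient branch deeper into a longest branch. Let $G_1$ be a branch with $d_1 = r$ (which exists since $\max_v d(v, z) = r$), and fix a geodesic $z = v_0, v_1, \ldots, v_r$ from $z$ to a depth-$r$ vertex inside $G_1$. Form $G^*$ by deleting the edge $z y_i$ and inserting the edge $v_s y_i$ for any $s \in \{1, \ldots, r - d_i\}$. The internal structure of $G_i$ is preserved; its deepest vertex sits at depth $s + d_i \leq r$ in $G^*$, while $v_r$ still has depth $r$, so $(G^*, z) \in \Anr$.

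To show $J_{G^*}(z) > J_G(z)$, I would first establish a \emph{grafting identity}: if a rooted tree $(T, y)$ on $m$ vertices is attached to a rooted tree $(G', z)$ by making $y$ a new neighbor of some $p \in V(G')$, then the resulting tree $G^\dagger$ satisfies
\begin{equation*}
J_{G^\dagger}(z) = J_{G'}(z) + J_T(y) + 4 m \, H_{G'}(p, z) + 4 m^2 \, d(p, z) + (2m - 1)^2.
\end{equation*}
This follows from Lemma \ref{lemma:hit-time}: for $v \in V(G')$, each $w \in V(T)$ contributes $\ell_{G^\dagger}(v, w; z) = \ell_{G'}(v, p; z)$ and $\deg_{G^\dagger}(V(T)) = 2m - 1$, giving $H_{G^\dagger}(v, z) = H_{G'}(v, z) + 2m \, \ell_{G'}(v, p; z)$; for $v \in V(T)$, the tree property yields $H_{G^\dagger}(v, z) = H_T(v, y) + (2m - 1) + H_{G^\dagger}(p, z)$. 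Summing $\deg_{G^\dagger}(v) H_{G^\dagger}(v, z)$ and using $\sum_v \deg(v) \ell(v, p; z) = H(p, z)$ yields the identity.

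Setting $G' = G_{-i} := G - V(G_i)$ and $T = G_i$, both $G$ and $G^*$ are graftings of $G_i$ onto $G_{-i}$: $G$ at attachment point $p = z$ (with $H_{G_{-i}}(z, z) = d(z, z) = 0$), $G^*$ at $p = v_s$. Subtracting the two instances of the identity, the common terms $J_{G_{-i}}(z)$, $J_{G_i}(y_i)$, and $(2n_i - 1)^2$ cancel, leaving
\begin{equation*}
J_{G^*}(z) - J_G(z) = 4 n_i H_{G_{-i}}(v_s, z) + 4 n_i^2 \, s > 0
\end{equation*}
for any $s \geq 1$, contradicting maximality. The main obstacle is identifying the correct operation and deriving the grafting identity cleanly; a tempting simpler move, relocating only a single leaf of $G_i$ rather than the entire branch, can actually \emph{decrease} $J(z)$ when $G_i$ has a high-degree vertex near $z$, so one must move the whole subtree.
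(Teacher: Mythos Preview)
Your proof is correct and uses the same regrafting operation as the paper: detach the deficient branch $G_i$ at $z$ and reattach it at a vertex $v_s$ along a depth-$r$ geodesic in another branch. The difference lies in how the inequality $J_{G^*}(z)>J_G(z)$ is established. The paper argues qualitatively that $\newH(v,z)\ge H(v,z)$ for every $v$ (stated as ``clear'' from the tree hitting-time formula) and then notes that the degree increase at $v_s$ forces strict inequality in the sum $\sum_v \newdeg(v)\newH(v,z)$. You instead derive an exact grafting identity expressing $J_{G^\dagger}(z)$ in terms of $J_{G'}(z)$, $J_T(y)$, $H_{G'}(p,z)$, and $d(p,z)$, and then subtract the two instances to get the explicit gain $4n_i H_{G_{-i}}(v_s,z)+4n_i^2 s$. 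Your route is longer but more transparent---nothing is left to the reader, and the identity itself is a reusable tool (indeed, the paper's equation~\eqref{eqn:join-partition} is the special case $p=z$). The paper's route is terser but its ``clear'' step, while true, does require checking the $\ell(v,w;z)$ contributions case by case. Your closing remark about single-leaf moves is a nice cautionary note, though not needed for the argument.
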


\begin{proof}
Without loss of generality, $\max_{v \in V(G_1)} d(v, z)=r$. Let
$v_0, v_1, \ldots, v_r=z$ be a path of length $r$ where $v_0, v_1, \ldots, v_{r-1} \in V(G_1)$. Assume for the sake of contradiction that there exists $2 \leq i \leq k$ such that
$\max_{v \in V(G_i)} d(v, z)=s<r$. Let $y \in V(G_i)$ be the unique neighbor of $z$, and set $\newG$ to be the graph with $V(\newG)=V(G)$ and $E(\newG) = E(G) - (z,y) + (v_s,y)$.

Observe that $(\newG,z) \in \Anr$. Furthermore, it is clear that $\newH(v,z) \geq H(v,z)$ for all $v \in V(G)$. 
Because $\newH(z,z) = 0 = H(z,z)$, we have
\begin{align*}
\newJ( z) 
&= 
\sum_{v \in V}  \newdeg(v) \newH(v,z) \\
&= 
\sum_{v \in V \backslash \{z,v_s\}}  \deg(v) \newH(v,z) 
+ (\deg(v_s)+1) \newH(v_s,z) \\
&>
\sum_{v \in V \backslash \{z,v_s\}}  \deg(v) H(v,z) 
+ \deg(v_s) H(v_s,z) = \J(z),
\end{align*}
contradicting the maximality of $(G,z)$.
\end{proof}

\begin{lemma}\label{lemma:broomify}
The quantity
$
\max_{(G,z) \in \Anr}  \J(z)
$ 
is achieved uniquely by the broom graph $B_{n, r}$ where $z=v_r$ is the final leaf of the broom handle. 
\end{lemma}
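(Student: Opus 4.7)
I will prove the lemma by induction on $n$. The base case $n = r+1$ is immediate: the only tree of order $r+1$ whose root has eccentricity $r$ is $P_{r+1}$, which coincides with $B_{r+1,r}$ rooted at its tip $v_r$.

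The main tool is the edge-subtree formula
$$
\J(z) \;=\; \sum_{e \in E} \bigl[\deg_G(V(T_e))\bigr]^2,
$$
where $T_e$ is the subtree on the far side of $e$ from $z$. This follows from equation~\eqref{eqn:join-to-vertex-2} by writing $\ell(u,v;z) = |\{e : u, v \in V(T_e)\}|$ and swapping sums. Letting $k = \deg(z)$, with components $T_1,\ldots,T_k$ of $G - z$, neighbors $y_i$ of $z$, and sizes $m_i = |V(T_i)|$, the formula decomposes cleanly as
$$
\J_G(z) \;=\; \sum_{i=1}^{k}(2m_i-1)^2 \;+\; \sum_{i=1}^{k}\J_{T_i}(y_i),
$$
because $\deg_G(V(T_i)) = 2m_i - 1$ and the sum over $E(T_i)$ reproduces $\J_{T_i}(y_i)$.

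The inductive step has two parts. First, I will show any maximizer satisfies $\deg(z) = 1$. Suppose $k \geq 2$. Lemma~\ref{lemma:pi-to-leaf-r} places each $(T_i, y_i) \in \mathcal{A}_{m_i, r-1}$, so the inductive hypothesis yields $\J_{T_i}(y_i) \leq \Jmax(B_{m_i, r-1})$. I compare against the candidate $G^{*} = B_{n,r}$ rooted at $v_r$, whose joining time is $\J_{G^{*}}(z) = (2n-3)^2 + \Jmax(B_{n-1, r-1})$ by the same decomposition. Invoking Lemma~\ref{lemma:broom-join-max}, the strict inequality $\J_{G^{*}}(z) > \J_G(z)$ reduces to two facts: (i) $(2n-3)^2 - \sum_i (2m_i - 1)^2 = (4n - 5 - k)(k - 1) + 2 \sum_{i<j}(2m_i - 1)(2m_j - 1) > 0$, by cross-term expansion; and (ii) the super-additivity $\Jmax(B_{n-1, r-1}) \geq \sum_i \Jmax(B_{m_i, r-1})$, which follows from the explicit polynomial formula since the quadratic coefficient $4(r-2)$ is nonnegative and $m_i \geq r$ controls the constant term.

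With $\deg(z) = 1$ established, let $y$ denote $z$'s unique neighbor. The decomposition collapses to $\J_G(z) = (2n-3)^2 + \J_{G-z}(y)$, and $(G - z, y) \in \mathcal{A}_{n-1, r-1}$. The inductive hypothesis, including its uniqueness, forces $G - z = B_{n-1, r-1}$ rooted at its tip, so attaching $z$ as a leaf there yields $G = B_{n, r}$ rooted at $v_r = z$. The main obstacle will be fact (ii) above: for $r \geq 3$ the leading quadratic term $4(r-2)m^2$ easily dominates, but when $r = 2$ this coefficient vanishes and the constant term $(4(r-1)^3 - 4(r-1) - 3)/3$ must be tracked separately. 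In that regime the cross-term piece (growing as $\binom{k}{2}(2r-1)^2$) provides more than enough slack, so the combined inequality holds.
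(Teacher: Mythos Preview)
Your argument is correct and shares its skeleton with the paper's: both use the decomposition
\[
\J_G(z)=\sum_{i=1}^{k}\bigl[(2m_i-1)^2+\J_{T_i}(y_i)\bigr]
\]
(which is exactly the paper's equation \eqref{eqn:join-partition}), and both invoke Lemma~\ref{lemma:pi-to-leaf-r} to place each $(T_i,y_i)\in\mathcal{A}_{m_i,r-1}$. Where you diverge is in ruling out $k\ge 2$. The paper first uses induction to force each $T_i=B_{m_i,r-1}$, then performs a \emph{local} surgery---moving a single bristle from $G_2$ to $G_1$---and computes the resulting increase in $\J(z)$ via Lemma~\ref{lemma:broom-join-max}, with a case split according to whether $G_2$ has more than one bristle. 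You instead make a \emph{global} comparison directly against $B_{n,r}$: you bound $\sum_i \J_{T_i}(y_i)\le \sum_i \Jmax(B_{m_i,r-1})$ by induction, and then invoke the super-additivity $\Jmax(B_{n-1,r-1})\ge\sum_i\Jmax(B_{m_i,r-1})$ of the explicit polynomial together with the elementary cross-term inequality (i). Your route avoids the paper's case analysis but trades it for verifying (ii), which amounts to checking that $8(r-2)\sum_{i<j}m_im_j\ge (k-1)\cdot\frac{4(r-1)^3-4(r-1)-3}{3}$; this is straightforward using $m_i\ge r$ for $r\ge 3$, and for $r=2$ the constant is $-1$ so the inequality is free. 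Both approaches lean on the explicit formula of Lemma~\ref{lemma:broom-join-max} in comparable ways.

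Two small points of presentation. First, your induction is really on $n$ with the statement quantified over all admissible $r$; the phrase ``base case $n=r+1$'' is better read as the observation that whenever $n=r+1$ the claim is immediate, with the true base of the $n$-induction being $n=2$. Second, your use of $\Jmax(B_{m,r-1})$ when $r=2$ implicitly extends Lemma~\ref{lemma:broom-join-max} to $d=1$; the formula does give the correct value $m-1$ for the star rooted at its center, but this extension deserves a sentence.
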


\begin{proof}
 We prove the result using double induction over the order $n$ and $r = \max_{v \in V} d(v,z)$.

For our base cases, consider $r=2$ and any value of $n\geq 3$. The unique tree in $\mathcal{A}_{n, 2}$ is the star $S_n = B_{n,2}$, and the statement holds.

Let $n' < n$ and $r' \leq r$. Assume that among the rooted trees in $\mathcal{A}_{n', r'}$, the maximum value of $H(\pi, z)$ is achieved by $B_{n', r'}$ rooted at its final handle vertex $z'$.

We now prove the inductive step.
Suppose that $(G,z) \in \Anr$ achieves $\max_{(G,z) \in \Anr} \J(z)$. Let $G_1, \ldots, G_k$ be the components of $G-z$. For $1 \leq i \leq k$, let $n_i = |V(G_i)|$, where
$n_1 \geq n_2 \geq \cdots \geq n_k$. Finally, let $y_i \in V(G_i)$ be the unique neighbor of $z$ in $V(G_i)$. 

By Lemma \ref{lemma:pi-to-leaf-r}, for $1 \leq i \leq k$, we have $\max_{v \in V(G_i)} d(v, z)=r$, and hence
$\max_{v \in V(G_i)} d(v, y_i)=r-1$. Therefore, $(G_i,y_i) \in \mathcal{A}_{n_i,r-1}$. Next, we observe that
\begin{align}
    \nonumber
    \J(z) 
    &= 
    \sum_{v \in V} \deg(v) H(v,z) 
    = \sum_{i=1}^k \sum_{v \in V(G_i)} \deg(v) (H(v,y_i) + H(y_i, z)) \\
    \nonumber
    &= 
    \sum_{i=1}^k \big( \J_{G_i} (y_i) + \deg(V(G_i)) \, H(y_i,z)  \big) \\
    \label{eqn:join-partition}
    &=
    \sum_{i=1}^k \left( \J_{G_i} (y_i) + (2n_i-1)^2  \right)
\end{align}
where the last equality follows from equation \eqref{eqn:hit-adjacent} for the hitting time between adjacent vertices.

Since $J(z)$ is the maximum value, by the inductive hypothesis, each $(G_i,y_i) \in \mathcal{A}_{n_i,r-1}$ must achieve $\max_{\mathcal{A}_{n_i,r-1}} \J(z)$, and therefore each $G_i$ is the broom graph $B_{n_i,r-1}$. If $k=1$, then $G$ is the broom $B_{n,r}$, and we are done. So assume, for the sake of contradiction, that $k>1$. We will show that moving a bristle leaf from broom $G_2$ to broom $G_1$ will increase $J(z)$, and this contradiction will complete the proof. There are two cases, depending on whether $G_2$ has multiple bristles.

{\bf Case 1:} $G_1 = B_{n_1,r-1}$ and $G_2=B_{n_2,r-1}$ where $n_1 \geq n_2 \geq r+1$ (so that $G_2$ has at least two bristles). We move a bristle leaf from $G_2$ to $G_1$. Define $\newG$ to be the graph obtained by setting $\newG_1 = B_{n_1+1,r-1}$ and $\newG_2=B_{n_2-1,r-1}$, and $\newG_i = G_i$ otherwise. 
By equation \eqref{eqn:join-partition},we have
\begin{equation}
\label{eqn:max-meet-diff}
 \newJ(z) - J(z) = 
\begin{multlined}[t]
\newJ_{G_1}(y_1) + (2(n_1+1)-1)^2  + \newJ_{G_2}(y_2) + (2(n_2-1)-1)^2 \\
-\J_{G_1}(y_1) - (2n_1-1)^2  - \J_{G_2}(y_2) - (2n_2-1)^2. 
\end{multlined}   
\end{equation}
By equation \eqref{eqn:join-broom-tip} with $d=r-1$, we have
\begin{equation}
\label{eqn:pi-to-leaf1}
\newJ_{G_1}(y_1) - \J_{G_1}(y_1)
= 8n_1(r-2) +12r-4r^2 -7  
\end{equation}
and
\begin{equation}
\label{eqn:pi-to-leaf2}
\newJ_{G_2}(y_2) - \J_{G_2}(y_2)
= -8n_2(r-2) -4r+4r^2 -9.
\end{equation}
We also have
\begin{equation}
\label{eqn:pi-to-leaf3}
(2(n_1+1)-1)^2 - (2n_1-1)^2 = 8n_1
\end{equation}
and
\begin{equation}
\label{eqn:pi-to-leaf4}
(2(n_2-1)-1)^2 - (2n_2-1)^2 = 8-8n_2.
\end{equation}
Adding these four expressions yields
$$
\newJ(z) - \J(z)
= 8(n_1-n_2+1) (r-1) > 0,
$$
contradicting the maximality of $G$.

{\bf Case 2:} $G_1=B_{n_1,r-1}$ and $G_2=B_{r,r-1} = P_r$, where $n_1 \geq r$. We move the endpoint of the path $G_2$ to become a bristle leaf in $G_1$. Equation \eqref{eqn:max-meet-diff} still applies. However,
equation \eqref{eqn:pi-to-leaf2} is now calculated using the meeting time formula in Theorem \ref{thm:meet-path-star}: for a path $P_s$ on vertices $v_0, v_1, \ldots, v_{s-1}$, we have
$$
J_{P_s} (v_{s-1}) = 2(s-1) \frac{4s^2-8s+3}{6} = \frac{4s^3 - 12s^2 + 11s -3}{3}.
$$
We have $G_2=P_r$ and $\newG_2=P_{r-1}$
Therefore
\begin{align*}
\newJ_{G_2}(y_2) - \J_{G_2}(y_2)  
&=
\J_{P_{r-1}}(v_{r-2}) - \J_{P_r}(v_{r-1}) = -(2r-3)^2
\end{align*}
Adding this difference to expressions \eqref{eqn:pi-to-leaf1}, \eqref{eqn:pi-to-leaf3}, and \eqref{eqn:pi-to-leaf4} with $n_2=r$ gives
$$
\newJ(z) - \J(z)
= 8 (n - r+1) (r - 1)  > 0,
$$
once again contradicting the maximality of $G$. This completes the proof.
\end{proof}

We can now prove Theorem \ref{thm:max-meet}.

\begin{proof}
We prove the statement for $\Jmax(G) = 2|E| \cdot  \Tmeet(G)$.
Given $G \in \Tnd$, let $x$ be the vertex achieving $\Jmax(G) = \max_{v \in V} \J(z)$. This vertex $x$ is a leaf by Corollary \ref{cor:leaf-is-max}. Define $r = \max_{v \in V} d(v,x) \leq d$ and root the graph at $x$, so that $G \in \Anr$. By Lemma \ref{lemma:broomify},  the unique maximizer of the joining time for trees in $\Anr$ is $(\Bnr,z)$ where $z$ is the handle tip vertex $v_r$. If $r < d$, then we can replace $\broom{n}{r}$ by $\broom{n}{d}$ and increase $\J(z)$ by Corollary \ref{cor:max-join-increasing}. This proves the first statement of the theorem. The formula for $\Tmeet(\Bnd)$ was proven in Corollary \ref{cor:broom-pi-to-leaf}.
\end{proof}

%%%%%%%%%
%%%%%%%%%
%%%%%%%%%

\section{Minimizing the meeting time}

In this section, we prove Theorems \ref{thm:min-meet-diam-3} and  \ref{thm:min-meet}.
To start, we develop formulas for $\Tmeet(\Dnd)$ and $\Tmeet(\Dnd')$, and quickly prove Theorem \ref{thm:min-meet-diam-3}, which characterizes $\min_{G \in \tree{n}{3}} \Tmeet(G)$.  Then we turn to $\min_{G \in \tree{n}{d}} \Tmeet(G)$ for $d \geq 4$. First, we show that a caterpillar achieves this value. Next, we move leaves in pairs to narrow our range of optimal trees to double brooms and near double brooms. Finally, we prove Theorem \ref{thm:min-meet}.

\subsection{The meeting time for double brooms and near double brooms}

In this section, we develop formulas for $\Tmeet(\Dnd)$ % when $n$ and $d$ have different parities, 
and $\Tmeet(\Dnd')$, %when $n$ and $d$ have the same parity. 
and  prove Theorem \ref{thm:min-meet-diam-3} about the meeting time of double stars. 
We start by setting some notation.
Recall from equation \eqref{eqn:adjacent-set} that when $u,v$ are adjacent vertices, we use $V_{u:v}$ and $V_{v:u}$ to denote the vertices in the components of $G- (u,v)$.
We introduce some helpful notation for decomposing a caterpillar along its spine, as shown in Figure \ref{fig:left-right}.

\begin{figure}
\begin{center}
    \begin{tikzpicture}

\begin{scope}

\begin{scope}[shift={(1,0)}]
\foreach \x in {140,165,195,220}
{
\draw[thick] (0,0) -- (\x:1);
\draw[fill] (\x:1) circle (2pt);
}
\end{scope}

\begin{scope}[shift={(2,0)}]
\foreach \x in {-75,-105}
{
\draw[thick] (0,0) -- (\x:1);
\draw[fill] (\x:1) circle (2pt);
}
\end{scope}

\begin{scope}[shift={(3,0)}]
\foreach \x in {-90}
{
\draw[thick] (0,0) -- (\x:1);
\draw[fill] (\x:1) circle (2pt);
}
\end{scope}

\begin{scope}[shift={(4,0)}]
\foreach \x in {-70,-90,-110}
{
\draw[thick] (0,0) -- (\x:1);
\draw[fill] (\x:1) circle (2pt);
}
\end{scope}

\begin{scope}[shift={(6,0)}]
\foreach \x in {-30,0,30}
{
\draw[thick] (0,0) -- (\x:1);
\draw[fill] (\x:1) circle (2pt);
}
\end{scope}

\draw[thick] (1,0) -- (6,0);

\foreach \x in {1,2,3,4,5,6}
{
\draw[fill] (\x,0) circle (2pt);
\node[above] at (\x,0) {\scriptsize $v_{\x}$};
}

\node[above] at (0,.25) {\scriptsize $v_0$};
\node[above] at (7,0) {\scriptsize $v_7$};

\draw[rounded corners, dashed] (-0.25,-1) rectangle (1.25, 1);
\draw[rounded corners, densely dotted] (-0.5,-1.35) rectangle (4.65, 1.35);
\draw[rounded corners, dashed] (3.35,-1.2) rectangle (7.5, 1);
\draw[rounded corners, densely dotted] (5.75,-.75) rectangle (7.25, .75);

\node at (0.75,-0.75) {\scriptsize $L_1$};
\node at (2.5,0.85) {\scriptsize $L_4$};
\node at (5.25,-0.8) {\scriptsize $R_4$};
\node at (6.25,-0.5) {\scriptsize $R_6$};

\end{scope}

\end{tikzpicture}
\end{center}
    \caption{A caterpillar with  example left sets $L_1$, $L_4$ and right sets $R_4$, $R_6$.}
    \label{fig:left-right}
\end{figure}
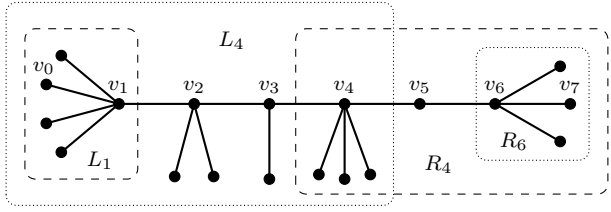

\begin{definition}
Let $G$ be a caterpillar $G \in \Tnd$ with spine $P = \{ v_0, v_1, \ldots, v_d \}$. For
$0 \leq i \leq d-1$, we define the $i$th \emph{left set}
$
L_i = V_{v_i:v_{i+1}}
$
and set $\ell = |L_1|-1$ to be the number of leftmost leaves. For $1 \leq j \leq d$ we define the $j$th \emph{right set}
$
R_j = V_{v_j:v_{j-1}}
$
and set $r = |R_{d-1}|-1$ to be the number of rightmost leaves.
\end{definition}

We now turn our attention to double brooms. Our next lemma provides joining time formulas for the endpoints of a double broom. 

\begin{lemma}
    \label{lemma:double-broom-join-time}
    Let $G\in \Tnd$ be a double-broom with spine $P= \{v_0,\ldots,v_d \}$ where $d>2$. Let $\ell$ and $r$ be the number of left and right leaves, respectively. Then
    \begin{equation}
    \label{eqn:hitting-time-right-side}
        \J(v_0)=4 n^2-11 n-d +9 + \sum_{i=1}^{d-2}(2r+2i-1)^2.
    \end{equation}
    and
    \begin{equation}
        \label{eqn:hitting-time-left-side}
        \J(v_d)=4 n^2-11 n-d +9 + \sum_{i=1}^{d-2}(2\ell+2i-1)^2.
    \end{equation}
\end{lemma}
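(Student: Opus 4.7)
The plan is to first derive a general sum-of-squares identity for $\J(w)$ in any tree, and then read off both formulas by enumerating the edges of the double broom. Specifically, for any tree $G$ and any vertex $w \in V$, I claim
\[
\J(w) \;=\; \sum_{e}\, \deg(V_{c:p})^2,
\]
where the sum runs over edges $e$ of $G$ oriented away from $w$, so that the parent $p$ is the endpoint closer to $w$ and the child $c$ is the endpoint farther from $w$. Since $G$ is a tree, the counting argument behind equation \eqref{eqn:hit-adjacent} also gives $\deg(V_{c:p}) = 2|V_{c:p}| - 1$. To prove the identity, I start from equation \eqref{eqn:join-to-vertex-2}, observe that $\ell(u,v;w)$ equals exactly the number of oriented edges $e = (p,c)$ for which both $u$ and $v$ lie in $V_{c:p}$, and swap the order of summation to obtain
\[
\J(w) = \sum_e \Bigl(\,\sum_{u \in V_{c:p}} \deg(u)\Bigr)^{\!2} = \sum_e \deg(V_{c:p})^2.
\]

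With that identity in hand, I will read off $\J(v_d)$ directly. The $\ell - 1$ edges joining $v_1$ to $u_2, \ldots, u_\ell$, the $r - 1$ edges joining $v_{d-1}$ to $w_2, \ldots, w_r$, and the spine edge $(v_0, v_1)$ each have child subtree of size $1$, contributing $(\ell + r - 1)\cdot 1^2 = n - d$ in total. For $1 \leq i \leq d - 2$, the interior spine edge $(v_i, v_{i+1})$ has $V_{v_i:v_{i+1}} = \{v_0, v_1, \ldots, v_i\} \cup \{u_2, \ldots, u_\ell\}$ of size $\ell + i$, contributing $(2\ell + 2i - 1)^2$. Finally, the spine edge $(v_{d-1}, v_d)$ has $V_{v_{d-1}:v_d} = V \setminus \{v_d\}$ of size $n - 1$, contributing $(2n - 3)^2 = 4n^2 - 12n + 9$. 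Summing the contributions and using $\ell + r - 1 = n - d$ gives
\[
\J(v_d) = (n-d) + (4n^2 - 12n + 9) + \sum_{i=1}^{d-2}(2\ell + 2i - 1)^2,
\]
which simplifies to the stated formula. The expression for $\J(v_0)$ follows at once by the identical argument with the spine reversed, which interchanges the roles of $\ell$ and $r$ while preserving every other subtree-size count.

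The main obstacle is mild: it is essentially bookkeeping to verify the spine and leaf subtree sizes and to check the final arithmetic. The underlying identity is nothing more than a swap of summation orders in \eqref{eqn:join-to-vertex-2}. An alternative route is to iterate the adjacent-vertex recursion inside the proof of Lemma \ref{lemma:adjacent-join-times} down the spine, but that would rederive the same sum of squares via a clumsier induction, so I prefer the direct approach above.
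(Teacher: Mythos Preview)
Your proof is correct. The identity $\J(w)=\sum_e \deg(V_{c:p})^2$ that you extract from \eqref{eqn:join-to-vertex-2} is valid: in a tree, $\ell(u,v;w)$ is precisely the number of edges on the common segment of the $(u,w)$- and $(v,w)$-paths, and an edge $(p,c)$ oriented away from $w$ lies on that segment exactly when $u,v\in V_{c:p}$. Your edge-by-edge tally for the double broom is also accurate, and $(n-d)+(2n-3)^2=4n^2-11n-d+9$ checks out.

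The paper takes the route you flag as the alternative: it computes $\J(v_1)$ by iterating the one-step recursion $\J(R_j,v_j)=\J(R_{j+1},v_{j+1})+\deg(R_{j+1})^2$ down the spine (this is essentially the adjacent-vertex comparison behind Lemma~\ref{lemma:adjacent-join-times}), collects the leaf contributions as $\ell+r$, and then invokes Lemma~\ref{lemma:unif-to-leaf} to pass from $\J(v_1)$ to $\J(v_0)$. Your approach front-loads the work into a single general identity and then reads off every edge at once, which is cleaner and avoids the separate appeal to Lemma~\ref{lemma:unif-to-leaf}; the paper's iteration, on the other hand, stays closer to the spine decomposition and makes the later near-double-broom computation (Lemma~\ref{lemma:near-double-broom-join-time}) a near-verbatim modification. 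Both arrive at the same sum of squares.
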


\begin{proof}
Observe that 
\begin{equation}
\label{eqn:join-double-broom-1}
     J(v_1)=J(L_1,v_1)+J(R_1,v_1) = \ell +  J(R_1,v_1).   
\end{equation}
We expand
$$
\J(R_1,v_1)=\J(R_2,v_2)+\deg(R_2)H(v_2,v_1)=\J(R_2,v_2)+\deg(R_2)^2
$$
where the last equality follows from equation \eqref{eqn:hit-adjacent}.
Repeating this process for $2 \leq j \leq d-1$ yields 
$$
\J(R_1,v_1)=\J(R_{d-1},v_{d-1})+\sum_{j=2}^{d-1}\deg(R_j)^2 = r + \sum_{j=2}^{d-1}\deg(R_j)^2.
$$
Next, we observe that  for $2\leq j\leq d-1$
$$
\deg(R_j)=2(d-j)+ (2r -1)
$$
and therefore 
$$
J(v_1)=\ell+r+\sum_{j=2}^{d-1}(2r+2(d-j))-1)^2
=n-d+1+\sum_{i=1}^{d-2}(2r+2i-1)^2.
$$    
    Applying Lemma \ref{lemma:unif-to-leaf} gives the formula
    \begin{align*}
        J(v_0)
        & %= J(v_1) + 4|E|(|E|-1) 
        = J(v_1) + 4(n^2-3n+2) 
         = 4 n^2-11 n-d +9 + \sum_{i=1}^{d-2}(2r+2i-1)^2.   
    \end{align*}

    Equivalent logic shows that formula \eqref{eqn:hitting-time-left-side} holds.
\end{proof}

Recall that the path $P_n$ is the balanced double broom $D_{n,n-1}$ with diameter $d=n-1$ and with $\ell=r=1$. In this case, formula \eqref{eqn:hitting-time-right-side} simplifies to
$$
\Jmax(P_n) = \frac{4 n^3}{3}-4 n^2+\frac{11 n}{3}-1
$$
and dividing by $2|E|=2(n-1)$ yields
$$
\Tmeet(P_n) = \frac{1}{6} \left(4 n^2-8 n+3\right),
$$
which matches the path meeting time of Theorem \ref{thm:meet-path-star}. 

Lemma \ref{lemma:double-broom-join-time} is sufficient to prove Theorem \ref{thm:min-meet-diam-3}, which characterizes $\min_{G \in \tree{n}{3}} \Tmeet(G)$ for the family of double stars.

\begin{proof}[Proof of Theorem \ref{thm:min-meet-diam-3}]
For a double star with diameter $d=3$, formulas \eqref{eqn:hitting-time-right-side} and \eqref{eqn:hitting-time-left-side} simplify to
\begin{align*}
    J(v_0) &=  4 n^2- 11 n + 6 +  (2r+1)^2, \\
    J(v_3) &= 4 n^2 - 11 n + 6 + (2\ell+1)^2.  
\end{align*}
Without loss of generality, we have $\ell \leq r$, so $J(v_0) \geq J(v_3)$. Furthermore, the double star that minimizes $J(v_0)$ is the balanced double star with $\lfloor (n-2)/2 \rfloor$ left leaves and $\lceil (n-2)/2 \rceil$ right leaves. We have
$$
\Jmax(\double{n}{3}) = J(v_0) =  4 n^2 - 11 n + 6 + \left(2 \left\lceil \frac{n-2}{2} \right\rceil +1 \right)^2.
$$
When $n$ is even, this simplifies to
$$
\Jmax(\double{n}{3}) =J(v_0) = 4 n^2- 11 n + 6  + (n-1)^2 =   5 n^2 - 13 n + 7 
$$
with corresponding meeting time
$$
\Tmeet(\double{n}{3}) = H(\pi, v_0) = \frac{5 n^2 - 13 n + 7}{2(n-1)} = \frac{5}{2}n - 4 - \frac{1}{2(n-1)}.
$$
When $n$ is odd, this simplifies to
$$
\Jmax(\double{n}{3}) = J(v_0) = 4 n^2- 11 n + 6  + n^2 =   5 n^2 - 11 n + 6 
$$
with corresponding meeting time
$$
\Tmeet(\double{n}{3}) = H(\pi, v_0) = \frac{5 n^2 - 11 n + 6 }{2(n-1)} = \frac{5}{2}n - 3.
$$
This completes the proof for double stars.
\end{proof}

From this point forward, we assume that $d \geq 4$. We calculate the meeting time for the balanced double broom $\Dnd$ when $n$ and $d$ have opposite parity.

\begin{lemma}
\label{lemma:double-broom-meet-time}
Suppose that $4 \leq d < n$ where $n$ and $d$ have opposite parity. The meeting time of $\Dnd$ is
$$
\Tmeet(\Dnd) = \frac{1}{2}(d+2) n
+ \frac{1}{6(n-1)} \left( d^3-6 d^2+8 d \right) - \frac{1}{2} (d+5).
$$
\end{lemma}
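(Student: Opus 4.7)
The plan is to specialize Lemma \ref{lemma:double-broom-join-time} to the balanced case. When $n$ and $d$ have opposite parity, $n-d$ is odd, so $n-d+1$ is even and the balanced double broom $\Dnd$ has $\ell = r = (n-d+1)/2$. In particular $2r - 1 = n-d$, so formula \eqref{eqn:hitting-time-right-side} becomes
$$
J(v_0) = 4n^2 - 11n - d + 9 + \sum_{i=1}^{d-2}(n-d+2i)^2,
$$
and by the reflective symmetry of $\Dnd$, $J(v_d) = J(v_0)$.

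Before computing, I would establish $\Jmax(\Dnd) = J(v_0)$, which reduces the meeting time to $H(\pi, v_0) = J(v_0)/(2(n-1))$. By Corollary \ref{cor:leaf-is-max}, the maximum is attained at some leaf of $\Dnd$. The leaves fall into two automorphism classes, the left bristles (including $v_0 = u_1$) and the right bristles (including $v_d = w_1$), and automorphisms of $\Dnd$ permute each class transitively, so $J$ is constant on each class. The balance hypothesis $\ell = r$ yields an additional automorphism that swaps the two classes, so $J$ is in fact constant on all leaves, confirming $\Jmax(\Dnd) = J(v_0)$.

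Finally I would expand the sum using the standard formulas for $\sum i$ and $\sum i^2$. Setting $a = n-d$,
$$
\sum_{i=1}^{d-2}(a+2i)^2 = (d-2)a^2 + 2a(d-2)(d-1) + \tfrac{2}{3}(d-2)(d-1)(2d-3),
$$
and then substituting into $J(v_0)$ and dividing by $2(n-1)$ yields the claimed meeting time. The main obstacle is this last algebraic simplification, in which one must separate the polynomial part (giving $\tfrac{1}{2}(d+2)n - \tfrac{1}{2}(d+5)$ after cancelling $n-1$) from the residual rational remainder $(d^3 - 6d^2 + 8d)/(6(n-1))$. A clean way to do this is to write $(n-d)^2 = ((n-1)-(d-1))^2$ and $(n-d) = (n-1) - (d-1)$, so that polynomial long division by $(n-1)$ becomes transparent; as in Lemma \ref{lemma:broom-join-max}, I would use a computer algebra system to validate the final closed form.
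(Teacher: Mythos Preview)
Your proposal is correct and follows essentially the same route as the paper: specialize Lemma~\ref{lemma:double-broom-join-time} with $\ell=r=(n-d+1)/2$, simplify the resulting sum, and divide by $2(n-1)$. The paper is terser, asserting $\Tmeet(\Dnd)=J(v_0)/2(n-1)$ without comment, whereas you explicitly justify $\Jmax(\Dnd)=J(v_0)$ via Corollary~\ref{cor:leaf-is-max} and the automorphisms of the balanced double broom; this extra care is welcome but not a different method.
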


\begin{proof}
 When $n$ and $d$ have opposite parity, the balanced double broom $\Dnd$ has $\ell=r=(n-d+1)/2$. In this case, joining time equation \eqref{eqn:hitting-time-right-side} simplifies to 
$$
J(v_0) = (d+2) n^2-(2d+7)n + \frac{1}{3}( d^3 -6 d^2+ 11d) +5.
$$
We have $\Tmeet(\Dnd) = J(v_0)/2(n-1)$ and the formula above follows via simplification.
\end{proof}

Now, we direct our attention to near double brooms. We start with joining time formulas for the endpoints of a near double broom.

\begin{lemma}
    \label{lemma:near-double-broom-join-time}
    Let $G\in \Tnd$ be a near double broom with spine $P=\{ v_0,\ldots,v_d \}$ where $d \geq 4$. Let $\ell$ and $r$ be the number of left and right leaves, respectively, and let the singleton leaf be adjacent to $v_k$ where $2 \leq k \leq d-2$. Then
    \begin{equation}
    \label{eqn:near-hitting-time-right-side}
        \J(v_0)=4 n^2-11 n-d +9 + \sum_{i=1}^{d-k-1}(2r+2i-1)^2 + \sum_{i=d-k}^{d-2}(2r+2i+1)^2.  
    \end{equation}
    and
    \begin{equation}
        \label{eqn:near-hitting-time-left-side}
        \J(v_d)=4 n^2-11 n-d +9 + \sum_{i=1}^{k-1}(2\ell+2i-1)^2 + \sum_{i=k}^{d-2}(2\ell+2i+1)^2. 
    \end{equation}
\end{lemma}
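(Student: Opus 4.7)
The plan is to mirror the proof of Lemma \ref{lemma:double-broom-join-time}, with the singleton leaf $z$ at $v_k$ introducing exactly one bookkeeping change. I would first compute $J(v_1)$ by peeling the spine from left to right; then Lemma \ref{lemma:unif-to-leaf} lifts this to $J(v_0) = J(v_1) + 4(n^2-3n+2)$ since $v_0$ is a leaf of $v_1$. Formula \eqref{eqn:near-hitting-time-left-side} for $J(v_d)$ will then follow by the symmetric argument, swapping $\ell \leftrightarrow r$ and $k \leftrightarrow d-k$.

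To compute $J(v_1)$, the left bristles $u_1,\ldots,u_\ell$ each contribute $\deg(u_i)\,H(u_i,v_1) = 1$, for a total of $\ell$. For every $v \in R_2$, the path from $v$ to $v_1$ passes through $v_2$, so using $H(v_2,v_1) = \deg(R_2)$ from equation \eqref{eqn:hit-adjacent} one obtains $\sum_{v \in R_2}\deg(v)H(v,v_1) = J(R_2,v_2) + \deg(R_2)^2$. At each subsequent peeling step with $j \neq k$, the vertex $v_j$ has no pendant leaf, and the recurrence is identical to the double broom case:
\[
J(R_j,v_j) = J(R_{j+1},v_{j+1}) + \deg(R_{j+1})^2.
\]
At the single step $j=k$, however, the singleton leaf $z$ is adjacent to $v_k$ and belongs to $R_k$, contributing an additional $\deg(z)\,H(z,v_k) = 1$. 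Unrolling down to the base case $J(R_{d-1},v_{d-1}) = r$ yields
\[
J(v_1) = \ell + r + 1 + \sum_{j=2}^{d-1}\deg(R_j)^2.
\]

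To finish, equation \eqref{eqn:hit-adjacent} also gives $\deg(R_j) = 2|R_j|-1$, and since $z$ lies in $R_j$ exactly when $j \leq k$ we have $|R_j| = d-j+r+1$ for $j \leq k$ and $|R_j| = d-j+r$ for $j > k$. Reindexing by $i = d-j$ splits the sum at the threshold $i = d-k$ into the two pieces $\sum_{i=1}^{d-k-1}(2r+2i-1)^2$ and $\sum_{i=d-k}^{d-2}(2r+2i+1)^2$ that appear in \eqref{eqn:near-hitting-time-right-side}. Combining with $\ell+r = n-d$ (a near double broom of diameter $d$ has $n=\ell+r+d$) and applying Lemma \ref{lemma:unif-to-leaf} produces formula \eqref{eqn:near-hitting-time-right-side}. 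The main subtlety is the isolated $+1$ in the recursion at $j=k$: it compensates exactly for the fact that $\ell+r$ is one smaller here than for a double broom of the same order, so the constant $4n^2-11n-d+9$ turns out to be identical in both formulas.
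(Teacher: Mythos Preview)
Your proposal is correct and follows essentially the same approach as the paper: peel the spine from $v_1$ to $v_{d-1}$ using the recurrence $J(R_j,v_j)=J(R_{j+1},v_{j+1})+\deg(R_{j+1})^2$, pick up the extra $+1$ from $H(z,v_k)=1$ at the singleton step, compute $\deg(R_j)$ via the case split $j\le k$ versus $j>k$, reindex by $i=d-j$, and finish with Lemma~\ref{lemma:unif-to-leaf}. Your remark that $\ell+r=n-d$ here (rather than $n-d+1$ as in the double broom) is exactly why the constant $4n^2-11n-d+9$ is unchanged, and the paper makes the same observation.
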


\begin{proof}
The proof is similar to that of the Lemma \ref{lemma:double-broom-join-time}. Equation \eqref{eqn:join-double-broom-1} holds, and 
$$
\J(R_1,v_1)=\J(R_{d-1},v_{d-1})+\sum_{j=2}^{d-1}\deg(R_j)^2 + H(z,v_k) = r + 1 + \sum_{j=2}^{d-1}\deg(R_j)^2.
$$
We have
$$
\deg(R_j)=
\begin{cases}
   2(d-j)+2r+1 & \mbox{for } 2 \leq j \leq k, \\ 
   2(d-j)+2r-1 & \mbox{for } k+1 \leq j \leq d-1. \\  
\end{cases}
$$
Therefore 
$$
  J(v_1)=\ell + r + 1 + \sum_{j=2}^{k}(2(r+d-j)+1)^2 + \sum_{j=k+1}^{d-1}(2(r+d-j)-1)^2 
$$
Reindexing by $i=d-j$ and then using $\ell + r + 1 = n-d+1$ and Lemma \ref{lemma:unif-to-leaf} gives
    \begin{align*}
        J(v_0)
        &= J(v_1) + 4|E|(|E|-1) = J(v_1) + 4(n^2-3n+2) \\
        & = 4 n^2-11 n-d +9 + \sum_{i=d-k}^{d-2}(2r+2i+1)^2 + \sum_{i=1}^{d-k-1}(2r+2i-1)^2.          
    \end{align*}

    Equivalent logic shows that formula \eqref{eqn:near-hitting-time-left-side} holds.
\end{proof}

\begin{lemma}
\label{lemma:near-double-broom-meet-time}
Suppose that $4 \leq d < n$ where $n$ and $d$ have the same parity. The meeting time of the balanced near double broom $\Dnd'$ is
$$
\Tmeet(\Dnd') = 
\frac{1}{2}\left(d+2\right)
+ \frac{d^3-3d^2-d+6}{6(n-1)}
-
\begin{cases}
  \frac{1}{2}(d+5) & \mbox{if $d$ is even}, \\
 \frac{1}{2}(d+3)
 & \mbox{if $d$ is odd}. 
\end{cases}
$$
\end{lemma}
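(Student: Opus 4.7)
The plan is to apply Lemma~\ref{lemma:near-double-broom-join-time} with the balanced parameters of $\Dnd'$, identify which leaf attains $\Jmax$, and then divide by $2|E|=2(n-1)$ to obtain $\Tmeet(\Dnd')$. Since $n$ and $d$ share parity, $n-d$ is even, so $\ell = r = (n-d)/2$, and the singleton leaf $z$ is attached at $v_k$ with $k = \lfloor d/2\rfloor$.

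First I would locate the maximizer. By Corollary~\ref{cor:leaf-is-max}, $\Jmax$ is attained at a leaf, and by Lemma~\ref{lemma:unif-to-leaf} any leaf $\zeta$ satisfies $J(\zeta)=J(y)+4(n^2-3n+2)$ for its unique neighbor $y$. The three leaf-classes of $\Dnd'$ have neighbors $v_1$, $v_{d-1}$, and $v_k$, so it suffices to compare $J(v_1)$, $J(v_{d-1})$, and $J(v_k)$. Iterating Lemma~\ref{lemma:adjacent-join-times} along the spine shows that $J(v_i)$ is unimodal on $i=1,\dots,d-1$ (because $\deg(V_{v_i:v_{i+1}})$ is strictly increasing in $i$), so $J(v_k)\le\max\{J(v_1),J(v_{d-1})\}$ and hence $\Jmax(\Dnd')=\max\{J(v_0),J(v_d)\}$. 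When $d$ is even we have $k=d/2$ and $\Dnd'$ is symmetric about $v_k$, so $J(v_0)=J(v_d)$. When $d$ is odd we have $k=(d-1)/2$, and subtracting \eqref{eqn:near-hitting-time-right-side} from \eqref{eqn:near-hitting-time-left-side} with $\ell=r$ causes all but one term of each sum to cancel, leaving $J(v_d)-J(v_0)=(2r+d)^2-(2r+d-2)^2=4(2r+d-1)>0$, so $\Jmax=J(v_d)$ in this case.

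With the maximizer identified, the remaining work is algebraic. I would rewrite the near-double-broom sum as a plain double-broom sum plus a linear correction,
\[
\sum_{i=1}^{d-k-1}(2r+2i-1)^2+\sum_{i=d-k}^{d-2}(2r+2i+1)^2 = \sum_{i=1}^{d-2}(2r+2i-1)^2 + 8\sum_{i=d-k}^{d-2}(r+i),
\]
and analogously for $J(v_d)$ with correction range $[k,d-2]$. The first piece is exactly the sum handled in Lemma~\ref{lemma:double-broom-meet-time}, and the second is a short arithmetic progression. Substituting $r=(n-d)/2$ and $k=\lfloor d/2\rfloor$ and then dividing by $2(n-1)$ produces the claimed closed form in each parity case.

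The main obstacle is the bookkeeping between the two parity cases. The correction sum is linear in $d$ and $n$, but its index range has one more term when $d$ is odd than when $d$ is even; this one-step shift is precisely what produces the constants $-\tfrac{1}{2}(d+5)$ versus $-\tfrac{1}{2}(d+3)$ in the two cases. I would also verify that the resulting polynomial in $d$ and $n$ is divisible by $n-1$ in each case so that the final fraction collapses cleanly to $\frac{d^3-3d^2-d+6}{6(n-1)}$. As the authors do elsewhere in the paper, I would finish by confirming the closed form with computer algebra.
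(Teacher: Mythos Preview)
Your proposal is correct and follows essentially the same route as the paper: apply Lemma~\ref{lemma:near-double-broom-join-time} with $\ell=r=(n-d)/2$ and $k=\lfloor d/2\rfloor$, identify $J(v_d)$ as the maximum, and divide by $2(n-1)$. Two minor remarks: (i) you re-derive the fact that the spine maximum occurs at an endpoint, but this is exactly Lemma~\ref{lemma:max-on-caterpillar}, which you could cite directly; (ii) your ``double-broom sum plus linear correction'' decomposition is a clean way to organize the algebra, though note that the baseline sum uses $r=(n-d)/2$ here rather than $(n-d+1)/2$ as in Lemma~\ref{lemma:double-broom-meet-time}, so you are reusing the form of the sum from Lemma~\ref{lemma:double-broom-join-time} rather than the final formula of Lemma~\ref{lemma:double-broom-meet-time}.
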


\begin{proof}
 When $n$ and $d$ have the same parity, the balanced near double broom $\Dnd'$ has $\ell=r=(n-d)/2$. 
When $d$ is even, the singleton leaf is adjacent to $v_{d/2}$, and the joining time equation \eqref{eqn:near-hitting-time-left-side} simplifies to 
$$
J(v_d) = (d+2) n^2 -(2d+7 )n + \frac{1}{3}( d^3-3d^2 + d)+7.
$$ 
On the other hand, when $d$ is odd, the singleton leaf is adjacent to $v_{(d-1)/2}$ and the joining time equation \eqref{eqn:near-hitting-time-left-side} simplifies to 
$$
J(v_d) = (d+2)n^2 - (2d+5)n + \frac{1}{3}( d^3-3d^2 + 2d)+4.
$$
We have $\Tmeet(\Dnd') = J(v_d)/2(n-1)$ and the formulas above follow via simplification.
\end{proof}

We conclude this subsection by considering the asymptotic behavior of $\Tmeet(D_{n,d(n)})$ and $\Tmeet(D_{n,d(n)}')$.
Consider a sequence of balanced brooms $\{ \broom{n}{d(n)} \}$ for increasing values of $n$, where $d=d(n)$ is a function of $n$. Using asymptotic notation, we have
$$
\Tmeet(D_{n,d(n)}) =
\begin{cases}
    \frac{d+2}{2} n + o(n) & \mbox{when } d \mbox{ is constant,} \\
    \frac{1}{2} n d(n) + o(n \, d(n)) & \mbox{when } \omega(1) = d(n) = o(n) \mbox{ is sublinear,} \\
    \frac{3c-c^3}{6} n^2 + o(n^2) & \mbox{when } d(n) = cn + o(n) \mbox{ where } 0 < c \leq 1.
\end{cases}
$$
The same asymptotic formulas hold for $\Tmeet(D_{n,d(n)}')$.
These asymptotic results are very similar to the analogous values for the mixing time of balanced double broom, see \cite{BHOV}.

\subsection{From tree to caterpillar}

In this section, we show that for any non-caterpillar $G \in \Tnd$ where $d \geq 4$, there exists a caterpillar $\newG \in \Tnd$ with a smaller joining time $\Jmax(\newG) < \Jmax(G)$. 
We start by showing that the joining time of a caterpillar is achieved by one of the two spine leaves.

\begin{lemma}
    \label{lemma:max-on-caterpillar}
    For a caterpillar $G\in \Tnd$ with spine $P=\{ v_0,\ldots,v_d \}$, we have
    $$
    \max_{v \in V} J(v) = \max \{ J(v_0), J(v_d) \}. 
    $$
\end{lemma}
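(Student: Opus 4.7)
My plan is to reduce the maximization over all vertices to a maximization along the spine, and then show that $J$ is valley-shaped on the spine. By Corollary \ref{cor:leaf-is-max}, there is a leaf $z$ with $J(z) = \max_{v \in V} J(v)$. In a caterpillar every leaf is adjacent to a unique spine vertex: the spine endpoints $v_0, v_d$ are leaves adjacent to $v_1, v_{d-1}$ respectively, while every other leaf is a pendant of some $v_j$ with $1 \le j \le d-1$. Applying Lemma \ref{lemma:unif-to-leaf}, any leaf $z$ with neighbor $v_j$ satisfies $J(z) = J(v_j) + 4(n^2-3n+2)$, where the additive constant depends only on $n$. In particular $J(v_0) = J(v_1) + 4(n^2-3n+2)$ and $J(v_d) = J(v_{d-1}) + 4(n^2-3n+2)$, so it suffices to prove that
\[
\max_{1 \le j \le d-1} J(v_j) = \max\{J(v_1), J(v_{d-1})\}.
\]

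To establish this I show the sequence $J(v_0), J(v_1), \ldots, J(v_d)$ is valley-shaped along the spine using Lemma \ref{lemma:adjacent-join-times}. Applied to consecutive spine vertices $v_j$ and $v_{j+1}$, the lemma gives $J(v_j) \ge J(v_{j+1})$ iff $\deg(L_j) \le \deg(R_{j+1})$. Set $f(j) = \deg(L_j) - \deg(R_{j+1})$ for $0 \le j \le d-1$. When the cut edge is shifted from $(v_j, v_{j+1})$ to $(v_{j+1}, v_{j+2})$, the vertex $v_{j+1}$ together with all of its non-spine leaf neighbors is transferred from the right component to the left component. Writing $S_{j+1}$ for this transferred set, we have $\deg(L_{j+1}) = \deg(L_j) + \deg(S_{j+1})$ and $\deg(R_{j+2}) = \deg(R_{j+1}) - \deg(S_{j+1})$, so
\[
f(j+1) - f(j) \;=\; 2\deg(S_{j+1}) \;\ge\; 2\deg(v_{j+1}) \;\ge\; 4,
\]
since $v_{j+1}$ lies in the interior of the spine and thus has degree at least $2$. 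Hence $f$ is strictly increasing, so $f$ changes sign at most once, and the sequence $J(v_0), J(v_1), \ldots, J(v_d)$ is non-increasing up to some index and then non-decreasing.

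This valley-shape immediately yields $\max_{1 \le j \le d-1} J(v_j) = \max\{J(v_1), J(v_{d-1})\}$, because the maximum of a valley-shaped sequence restricted to a sub-interval is attained at one of its endpoints. Combined with the reduction above we obtain $\max_{v \in V} J(v) = \max\{J(v_0), J(v_d)\}$, as desired. The only point that requires any care is verifying that $f$ is strictly monotone rather than merely non-decreasing, which is handled by the degree bound $\deg(v_{j+1}) \ge 2$ at interior spine vertices; everything else follows directly from Corollary \ref{cor:leaf-is-max} and Lemmas \ref{lemma:unif-to-leaf} and \ref{lemma:adjacent-join-times}.
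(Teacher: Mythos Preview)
Your proof is correct and follows essentially the same approach as the paper: reduce to leaves via Corollary~\ref{cor:leaf-is-max}, pass to the adjacent spine vertex using Lemma~\ref{lemma:unif-to-leaf}, and then use Lemma~\ref{lemma:adjacent-join-times} along the spine to show the sequence $J(v_1),\ldots,J(v_{d-1})$ is valley-shaped. The paper argues the valley shape a bit more tersely (for each interior $v_i$ one of the two degree inequalities holds, and then the lemma is applied repeatedly toward the nearer endpoint), whereas you make the monotonicity explicit via the strictly increasing function $f(j)=\deg(L_j)-\deg(R_{j+1})$; these are the same argument packaged slightly differently.
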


\begin{proof}
By Corollary \ref{cor:leaf-is-max}, $\max_{v \in V} J(v)$ is achieved by a leaf $z$. Meanwhile, Lemma \ref{lemma:unif-to-leaf} shows that
$$
J(z) = J(v_i) + 4(n^2-3n+2)
$$
where $v_i$ is the neighbor of $z$, where $1 \leq i \leq d-1$. So it is sufficient to show that
\begin{equation}
\label{eqn:max-on-cat-proof}
\max_{1 \leq i \leq d-1} J(v_i) = \max \{ J(v_1), J(v_{d-1}) \}.
\end{equation}
For $2 \leq i \leq d-2$,  we either have $\deg(V_{v_{i-1}: v_i}) < \deg(V_{v_i:v_{i-1}})$ or
$\deg(V_{v_{i+1}: v_i}) < \deg(V_{v_i:v_{i+1}})$.
In the former case, Lemma \ref{lemma:adjacent-join-times}  yields $J(v_{i}) < J(v_{i-1})$. We apply this lemma repeatedly to find that $J(v_{i}) < J(v_{i-1}) < \cdots < J(v_1)$. 
In the latter case, we similarly have $J(v_{i}) < J(v_{i+1}) < \cdots < J(v_{d-1})$. 

We conclude that equation \eqref{eqn:max-on-cat-proof} is valid. Every leaf adjacent to the maximizing vertex (including either $v_0$ or $v_d$) achieves $\max_{v \in V} J(v).$
\end{proof}

We update our tree $G \in \Tnd$ by moving one or two leaves at a time. We refer to these leaf operations as \emph{tree surgeries}. We have the following definition and lemma from \cite{BHOV}.

\begin{definition}
Let $G \in \tree{n}{d}$ be a non-caterpillar, with maximal path $P = \{ v_0, v_1, \ldots, v_s \}$. Let $y \in V(G \backslash P)$ be a leaf that is not adjacent to $P$. Define the tree surgery $\sigma(G,P;y)$ to be the tree obtained by moving  leaf $y$ to be adjacent to path vertex $v_k$ that is closest to $y$. 
\end{definition}

An example  of $\sigma(G,P;y)$ is shown in Figure \ref{fig:surgery-sigma}. 

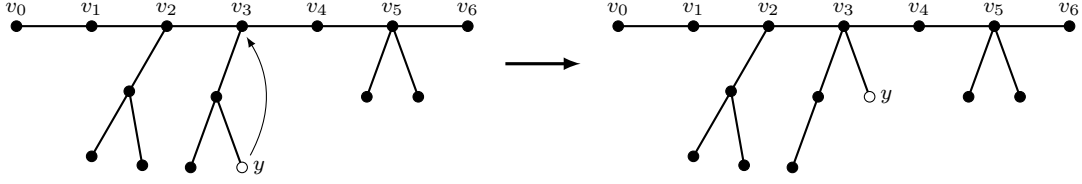
\begin{figure}
\begin{center}
\begin{tikzpicture}

%%%%%%%%%%%%%%%%%%%%%% LEAF TRANSPLANT
\begin{scope}

%%%%% BEFORE
\begin{scope}[xscale=-1, shift={(-6,0)}]

\draw[thick] (0,0) -- (6,0);

\foreach \x in {0,1,2,3,4,5,6}
{
\draw[fill] (\x,0) circle (2pt);
\node[above] at (6-\x,0) {\scriptsize $v_{\x}$};
}

\begin{scope}[shift={(3,0)}]

\draw[thick] (0,0) -- (-70:2);
\draw[fill] (-70:1) circle (2pt);
\draw[fill] (-70:2) circle (2pt);

\begin{scope}[shift={(-70:1)}]

\draw[thick] (0,0) -- (-110:1);
\draw[fill=white] (-110:1) circle (2pt);
\node[right] at (-110:1) {\scriptsize $y$};

\draw[-latex] (-120:.9) to [bend left] (-.4,.8);

\end{scope}

\end{scope}

\begin{scope}[shift={(1,0)}]
\draw[thick] (0,0) -- (-70:1);
\draw[thick] (0,0) -- (-110:1);
\draw[fill] (-70:1) circle (2pt);
\draw[fill] (-110:1) circle (2pt);
\end{scope}

\begin{scope}[shift={(4,0)}]
\draw[thick] (0,0) -- (-60:2);
\draw[fill] (-60:1) circle (2pt);
\draw[fill] (-60:2) circle (2pt);

\begin{scope}[shift={(-60:1)}]
    \draw[thick] (0,0) -- (-100:1);
    \draw[fill] (-100:1) circle (2pt);
\end{scope}

\end{scope}

\end{scope}

\draw[very thick, -latex] (6.5, -.5) -- (7.5,-.5);

%%%%% AFTER
\begin{scope}[xscale=-1, shift={(-14,0)}]

\draw[thick] (0,0) -- (6,0);

\foreach \x in {0,1,2,3,4,5,6}
{
\draw[fill] (\x,0) circle (2pt);
\node[above] at (6-\x,0) {\scriptsize $v_{\x}$};
}

\begin{scope}[shift={(3,0)}]

\draw[thick] (0,0) -- (-70:2);
\draw[fill] (-70:1) circle (2pt);
\draw[fill] (-70:2) circle (2pt);

\begin{scope}

\draw[thick] (0,0) -- (-110:1);
\draw[fill=white] (-110:1) circle (2pt);
\node[right] at (-110:1) {\scriptsize $y$};

\end{scope}

\end{scope}

\begin{scope}[shift={(1,0)}]
\draw[thick] (0,0) -- (-70:1);
\draw[thick] (0,0) -- (-110:1);
\draw[fill] (-70:1) circle (2pt);
\draw[fill] (-110:1) circle (2pt);
\end{scope}

\begin{scope}[shift={(4,0)}]
\draw[thick] (0,0) -- (-60:2);
\draw[fill] (-60:1) circle (2pt);
\draw[fill] (-60:2) circle (2pt);

\begin{scope}[shift={(-60:1)}]
    \draw[thick] (0,0) -- (-100:1);
    \draw[fill] (-100:1) circle (2pt);
\end{scope}

\end{scope}

\end{scope}

% %%%%% AFTER
% \begin{scope}[shift={(8,0)}]

% \draw[thick] (0,0) -- (6,0);

% \foreach \x in {0,1,2,3,4,5,6}
% {
% \draw[fill] (\x,0) circle (2.5pt);
% \node[above] at (\x,0) {$v_{\x}$};
% }

% \begin{scope}[shift={(3,0)}]

% \draw[thick] (0,0) -- (-70:2);
% \draw[thick] (0,0) -- (-110:1);

% \draw[fill] (-70:1) circle (2.5pt);
% \draw[fill] (-70:2) circle (2.5pt);
% \draw[fill=white] (-110:1) circle (2.5pt);

% \node[left] at (-110:1) {$y$};

% \end{scope}

% \draw[thick] (1,0) -- (1,-1);
% \draw[fill] (1,-1) circle (2.5pt);

% \begin{scope}[shift={(4,0)}]
% \draw[thick] (0,0) -- (-60:2);
% \draw[fill] (-60:1) circle (2.5pt);
% \draw[fill] (-60:2) circle (2.5pt);

% \end{scope}

% \end{scope}
 \end{scope}

\end{tikzpicture}
\end{center}

\caption{An example of tree surgery $\sigma(G,P;y)$}
\label{fig:surgery-sigma}
\end{figure}

\begin{lemma}[Lemma 4.5 of \cite{BHOV}]
\label{lemma:caterpillarify-hitting-times}
Let $G \in \tree{n}{d}$ be a non-caterpillar with maximal path $P = \{ v_0, v_1, \ldots, v_s \}$, where $s \leq d$, and leaf $y \in V(G \backslash P)$ that is not adjacent to $P$. If $\newG = \sigma(G,P;y)$ then the following statements hold.
\begin{enumerate}[(a)]
\item $\newH(v_i, v_j)=H(v_i,v_j)$, for all $0 \leq i,j  \leq s$.
\item More generally, for $0 \leq j \leq s$, we have $\newH(v,v_j) \leq H(v,v_j)$ for all $v \in V(G)$, 
with  $\newH(y,v_j) < H(y,v_j)$, in particular.
\item $\newH(\newpi, v_j) < H(\pi, v_j)$ for $0 \leq j \leq s$. 
\end{enumerate}
\end{lemma}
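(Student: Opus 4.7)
My plan is to leverage the hitting-time formula of Lemma~\ref{lemma:hit-time}, $H(u,v) = \sum_{w\in V} \ell(u,w;v)\,\deg(w)$, applied side-by-side in $G$ and $\newG$. The key structural fact is that $G - y$ and $\newG - y$ are identical labeled subtrees. Writing $x$ for $y$'s neighbor in $G$ and $t := d_G(y,v_k) \ge 2$, the only distances that change under $\sigma$ are distances from $y$, and the only degrees that change are $\deg(x)$ (decreased by $1$) and $\deg(v_k)$ (increased by $1$). Hence a term-by-term comparison of the two hitting-time sums only requires attention to the contributions of $w \in \{y, x, v_k\}$.

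For part (a), fix $v_i, v_j \in P$. Because $x$ lies in the branch off $P$ rooted at $v_k$, we have $d_G(x, v_j) = (t-1) + |k-j|$ and $d_G(v_i, x) = |i-k| + (t-1)$; analogous identities hold for $y$ in both $G$ and $\newG$. A direct computation then reveals the striking coincidence
\[ \ell(v_i, x; v_j) = \ell(v_i, v_k; v_j) = \ell_G(v_i, y; v_j) = \newell(v_i, y; v_j) = \tfrac{1}{2}\bigl(|i-j| + |k-j| - |i-k|\bigr). \]
Geometrically, the whole branch containing $y$ is ``seen through $v_k$'' from $P$. Consequently the $-1$ contribution at $x$ and the $+1$ contribution at $v_k$ cancel exactly, giving $\newH(v_i, v_j) = H(v_i, v_j)$.

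For part (b) with $v \ne y$, the same decomposition gives
\[ H(v, v_j) - \newH(v, v_j) = \bigl[\ell(v, x; v_j) - \ell(v, v_k; v_j)\bigr] + \bigl[\ell_G(v, y; v_j) - \newell(v, y; v_j)\bigr]. \]
Writing $x = u_0, u_1, \ldots, u_{t-1} = v_k$ for the path from $x$ to $v_k$ and letting $i \in \{0, \ldots, t-1\}$ index the vertex on this path to which $v$ is closest, we get $d_G(v, x) - d_G(v, v_k) = 2i - (t-1)$, and each bracketed difference simplifies to $t - 1 - i$. Thus $H(v, v_j) - \newH(v, v_j) = 2(t-1-i) \ge 0$. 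For $v = y$, edge-additivity yields $H(y, v_j) = H(y, v_k) + H(v_k, v_j)$ and $\newH(y, v_j) = 1 + \newH(v_k, v_j)$, so part (a) collapses this to $H(y, v_j) - \newH(y, v_j) = H(y, v_k) - 1$, which is positive because $t \ge 2$ forces $H(y, v_k) > 1$ via equation~\eqref{eqn:hit-adjacent}.

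Part (c) follows by writing
\[ H(\pi, v_j) - \newH(\newpi, v_j) = \sum_v (\pi_v - \newpi_v) H(v, v_j) + \sum_v \newpi_v \bigl(H(v, v_j) - \newH(v, v_j)\bigr). \]
The first sum is supported on $\{x, v_k\}$, with $\pi_x - \newpi_x = 1/(2|E|)$ and $\pi_{v_k} - \newpi_{v_k} = -1/(2|E|)$, and collapses to $H(x, v_k)/(2|E|) > 0$ via $H(x, v_j) = H(x, v_k) + H(v_k, v_j)$; the second is nonnegative by part (b) and strictly positive at $v = y$. The main obstacle is parts (a) and (b): one must verify the clean coincidences among the $\ell$-values involving $y$, $x$, and $v_k$---four coinciding values on $P$ and two differences both equal to $t - 1 - i$ off $P$---which encode geometrically that moving $y$ inside its branch off $v_k$ is invisible to $P$-to-$P$ hitting and otherwise only shifts source hitting times in proportion to how far the source sits inside the branch.
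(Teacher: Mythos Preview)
The paper does not supply its own proof of this lemma: it is quoted verbatim as Lemma~4.5 of \cite{BHOV} and used as a black box, so there is no in-paper argument to compare against. Your proof is correct and self-contained. The engine is Lemma~\ref{lemma:hit-time}, and your key structural observations---that $G-y$ and $\newG-y$ coincide, that only $\deg(x)$ and $\deg(v_k)$ change, and that any vertex in the branch at $v_k$ contributes the same $\ell(v_i,\,\cdot\,;v_j)$ value as $v_k$ itself when both endpoints lie on $P$---cleanly isolate the nonzero terms. The parametrization by the index $i$ of the nearest vertex on the $x$--$v_k$ path is the right bookkeeping device for part~(b), and the decomposition of $H(\pi,v_j)-\newH(\newpi,v_j)$ into a degree-change term plus a hitting-time-change term is the natural way to obtain~(c).
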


\begin{lemma}
    \label{lemma:tree-to-caterpillar}
Let $G \in \Tnd$ be a non-caterpillar.  Then there is a caterpillar $\newG \in \Tnd$ such that
$\Jmax(\newG) < \Jmax(G)$.
\end{lemma}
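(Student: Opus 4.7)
The plan is to iteratively apply the tree surgery of Lemma \ref{lemma:caterpillarify-hitting-times}, strictly decreasing the joining times at the two endpoints of a fixed diametral path at every step, until we arrive at a caterpillar. Once a caterpillar is obtained, Lemma \ref{lemma:max-on-caterpillar} identifies those two endpoints as the locations of $\Jmax$, and the desired strict inequality falls out immediately.

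First, I fix a diametral path $P = \{v_0, v_1, \ldots, v_d\}$ of $G$, so that $v_0$ and $v_d$ are leaves of $G$. Because $G$ is not a caterpillar, some leaf $y \in V(G) \setminus V(P)$ lies at distance at least two from $P$. Form $G_1 = \sigma(G, P; y)$. Since the surgery only detaches $y$ from its current neighbor and re-attaches it to a vertex of $P$, the path $P$ remains a length-$d$ subpath of $G_1$ and the diameter is still $d$, so $G_1 \in \Tnd$. Lemma \ref{lemma:caterpillarify-hitting-times}(c), multiplied through by $2|E| = 2(n-1)$ (identical for both graphs), yields
$$
J_{G_1}(v_j) < J_G(v_j) \qquad \text{for every } 0 \leq j \leq d.
$$

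If $G_1$ is not yet a caterpillar, I iterate the surgery to produce $G_2, G_3, \ldots$ Each application strictly increases the number of edges of the current tree that are incident with the fixed path $P$: the deleted edge $(y,u)$ misses $P$ entirely (since $y$ lies at distance at least two from $P$, neither $y$ nor $u$ belongs to $P$), while the new edge $(y,v_k)$ does touch $P$. Hence after finitely many steps we terminate with a caterpillar $G_k$, and composing the strict inequalities along the sequence gives $J_{G_k}(v_j) < J_G(v_j)$ for $0 \leq j \leq d$. Because $G_k$ is a caterpillar with spine $P$, Lemma \ref{lemma:max-on-caterpillar} yields $\Jmax(G_k) = \max\{J_{G_k}(v_0), J_{G_k}(v_d)\}$, while $v_0, v_d \in V(G)$ give $\max\{J_G(v_0), J_G(v_d)\} \leq \Jmax(G)$. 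Combining these three observations,
$$
\Jmax(G_k) = \max\{J_{G_k}(v_0), J_{G_k}(v_d)\} < \max\{J_G(v_0), J_G(v_d)\} \leq \Jmax(G),
$$
so setting $\newG = G_k$ finishes the argument.

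The genuine subtlety is that Lemma \ref{lemma:caterpillarify-hitting-times} controls hitting times only to vertices of $P$, whereas $\Jmax$ of an intermediate tree $G_i$ could in principle be attained at some off-spine leaf, where we have no direct inequality. The resolution, and the main step in the plan, is to perform all surgeries first and only invoke Lemma \ref{lemma:max-on-caterpillar} at the very end: being a caterpillar is precisely what forces the maximizer of $\Jmax(G_k)$ back onto one of the endpoints $v_0, v_d$, reducing the final comparison to the two spine-endpoint joining times that have been tracked throughout the sequence.
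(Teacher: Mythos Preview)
Your proof is correct and follows essentially the same approach as the paper: iterate the surgery $\sigma$ along a fixed diametral path $P$, track the strict decrease of $J(v_0)$ and $J(v_d)$ at every step via Lemma~\ref{lemma:caterpillarify-hitting-times}(c), and invoke Lemma~\ref{lemma:max-on-caterpillar} only once the caterpillar is reached. Your write-up is in fact more careful than the paper's, explicitly verifying that the diameter is preserved and that the process terminates.
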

\begin{proof}
Let $P= \{ v_0, v_1, \ldots, v_d \}$ be a geodesic of $G$, and let $y \in V$ be a leaf that is not adjacent to $P$. Set  $\newG = \sigma(G,P;y)$. By Lemma \ref{lemma:caterpillarify-hitting-times}(c),  
$\newH(\newpi, v_j) < H(\pi, v_j)$ for all $1 \leq j \leq s$ 
Equivalently, we have $\newJ( v_j) < J( v_j)$ for all $1 \leq j \leq s$; in particular, $\newJ(v_0) < \J(v_0)$ and $\newJ(v_d) < \J(v_d)$. 

Repeat this process until we have a caterpillar $\newG$ with spine $P$. By Lemma \ref{lemma:max-on-caterpillar},
$$
\Jmax(\newG) = \max \{ \newJ(v_0), \newJ(v_d) \} 
< \max \{ \J(v_0), \J(v_d) \} \leq \max_{v \in V(G)} \J(v) = \Jmax(G).
$$
\end{proof}

%%%%%%%%%%%%
\subsection{Proof of Theorem \ref{thm:min-meet}}

In this section, we prove Theorem \ref{thm:min-meet}: for $4 \leq d <n$, the quantity
$\min_{G\in \Tnd}\max_{v\in V} H(\pi,v)$
is uniquely achieved by (a) the balanced double broom $\Dnd$ when $n$ and $d$ have different parity, and (b) the balanced near double broom $\Dnd'$ when $n$ and $d$ have the same parity.

By Lemma \ref{lemma:tree-to-caterpillar}, the optimal tree must be a caterpillar. Hence, for the remainder of this section, $G \in \Tnd$ will be a caterpillar with spine $P=\{v_0, v_1, \ldots, v_d \}.$ We need the following definition and lemma from \cite{BHOV}.

\begin{definition}
Let $G \in \tree{n}{d}$ be a caterpillar with spine $P=\{v_0, v_1, \ldots, v_d\}$. If $G$ has a leaf $x \in V(G \backslash P)$ adjacent to $v_i$, then we define $\tau(G,P;(i,j))$ to be the caterpillar $G - (v_i, x) + (v_j, x)$. If $G$ also has a leaf $y \in V(G \backslash P)$ adjacent to $v_k$, then we define the tree surgery
$\tau(G,P; (i,j) \wedge (k,\ell))$ to be the caterpillar $G - (v_i, x) + (v_j, x) - (v_k,y) + (v_{\ell}, y).$
\end{definition}

An example of surgery $\tau(G,P; (i,i-1) \wedge (j,j-1))$ is shown in Figure \ref{fig:surgery-tau}. 

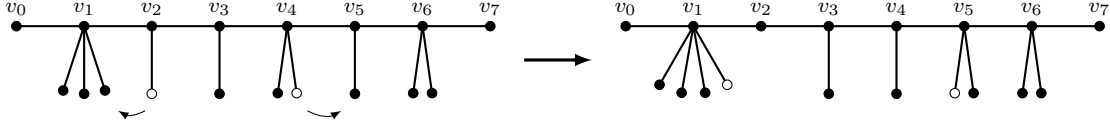
\begin{figure}
\begin{center}
\begin{tikzpicture}[scale=.9]

%%%%%%%%%%%%%%%%%%%%%% DOUBLE SURGERY
\begin{scope}

%%%%% BEFORE
\begin{scope}

\draw[thick] (0,0) -- (7,0);

\foreach \x in {0,1,2,3,4,5,6,7}
{
\draw[fill] (\x,0) circle (2pt);
\node[above] at (\x,0) {\scriptsize $v_{\x}$};
}

\begin{scope}[shift={(1,0)}]

\foreach \x in {-72, -90, -108}
{
\draw[thick] (0,0) -- (\x:1);
}

\draw[fill] (-108:1) circle (2pt);
\draw[fill] (-90:1) circle (2pt);
\draw[fill] (-72:1) circle (2pt);

\end{scope}

\draw[thick] (2,0) -- (2,-1);
\draw[fill=white] (2,-1) circle (2pt);

\draw[-latex] (1.9,-1.25) to [bend left] (1.5,-1.25);

\draw[thick] (3,0) -- (3,-1);
\draw[fill] (3,-1) circle (2pt);

\begin{scope}[shift={(4,0)}]

\foreach \x in {-82,-98}
{
\draw[thick] (0,0) -- (\x:1);
}

\draw[fill] (-98:1) circle (2pt);
\draw[fill=white] (-82:1) circle (2pt);

\end{scope}

\draw[-latex] (4.3,-1.25) to [bend right] (4.8,-1.25);

\draw[thick] (5,0) -- (5,-1);
\draw[fill] (5,-1) circle (2pt);

\end{scope}

\begin{scope}[shift={(6,0)}]

\foreach \x in {-82,-98}
{
\draw[thick] (0,0) -- (\x:1);
}

\draw[fill] (-98:1) circle (2pt);
\draw[fill] (-82:1) circle (2pt);

\end{scope}

\draw[very thick, -latex] (7.5, -.5) -- (8.5,-.5);

%%%%% AFTER
\begin{scope}[shift={(9,0)}]

\draw[thick] (0,0) -- (7,0);

\foreach \x in {0,1,2,3,4,5,6,7}
{
\draw[fill] (\x,0) circle (2pt);
\node[above] at (\x,0) {\scriptsize $v_{\x}$};
}

\begin{scope}[shift={(1,0)}]

\foreach \x in {-60,-80,-100,-120}
{
\draw[thick] (0,0) -- (\x:1);
}

\draw[fill] (-120:1) circle (2pt);
\draw[fill] (-100:1) circle (2pt);
\draw[fill] (-80:1) circle (2pt);
\draw[fill=white] (-60:1) circle (2pt);

\end{scope}

\draw[thick] (3,0) -- (3,-1);
\draw[fill] (3,-1) circle (2pt);

\draw[thick] (4,0) -- (4,-1);
\draw[fill] (4,-1) circle (2pt);

\begin{scope}[shift={(5,0)}]

\foreach \x in {-82,-98}
{
\draw[thick] (0,0) -- (\x:1);
}

\draw[fill=white] (-98:1) circle (2pt);
\draw[fill] (-82:1) circle (2pt);

\end{scope}

\begin{scope}[shift={(6,0)}]

\foreach \x in {-82,-98}
{
\draw[thick] (0,0) -- (\x:1);
}

\draw[fill] (-98:1) circle (2pt);
\draw[fill] (-82:1) circle (2pt);

\end{scope}

\end{scope}

\end{scope}

\end{tikzpicture}
\end{center}

\caption{An example of surgery $\tau(G,P; (2,1) \wedge (4,5))$. }
\label{fig:surgery-tau}
\end{figure}

\begin{lemma}[Lemma 4.10 of \cite{BHOV}]
\label{lemma:two-leaf-pi-access-time}
Given a caterpillar $G$ with spine $P=\{v_0, v_1, \ldots, v_d\}$ and leaf $x$ adjacent to $v_i$ and leaf $y$ adjacent to $v_j$, where $2 \leq i \leq j \leq d-2$. Let $\newG = \tau(G,P; (i,i-1) \wedge (j,j+1))$.
Then $\newH(\newpi,v_d) < H(\pi,v_d)$ and $\newH(\newpi,v_0) < H(\pi,v_0)$.
\end{lemma}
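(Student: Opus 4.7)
The plan is to prove the $v_d$ statement directly by bookkeeping along the spine; the $v_0$ statement then follows by applying the same argument after relabeling $v_k \mapsto v_{d-k}$, which sends the surgery $\tau(G,P;(i, i-1)\wedge(j, j+1))$ to a surgery of the same type on the reflected caterpillar with indices $(d-j, d-i)$ in place of $(i,j)$. Since $|E|$ is preserved by the surgery, it is equivalent (and cleaner) to show the unscaled inequality $\newJ(v_d) < J(v_d)$. Setting $T_k := H(v_k, v_d)$ and letting $s_k$ denote the number of non-spine leaves at $v_k$ (so $s_0 = s_d = 0$ and $\deg(v_k) = 2 + s_k$ for $1 \le k \le d-1$), each leaf attached to $v_k$ contributes $1\cdot(1+T_k)$ and $v_k$ itself contributes $(2+s_k)T_k$, which gives
\[
J(v_d) \;=\; T_0 + \sum_{k=1}^{d-1} \bigl[\, 2(1+s_k)\, T_k + s_k \,\bigr].
\]

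Next I would quantify how the surgery perturbs this data. Direct inspection of the definition $L_k = \{u : d(u, v_k) < d(u, v_{k+1})\}$ shows that moving $x$ from $v_i$ to $v_{i-1}$ increases $|L_{i-1}|$ by one and leaves every other $|L_k|$ unchanged, while moving $y$ from $v_j$ to $v_{j+1}$ decreases $|L_j|$ by one and leaves every other $|L_k|$ unchanged. Applying equation \eqref{eqn:hit-adjacent} to $T_k = \sum_{m \ge k}(2|L_m|-1)$, the shifts $+2$ at $m = i-1$ and $-2$ at $m = j$ combine to give $\newT_k - T_k = -2$ for $i \le k \le j$ and $0$ otherwise. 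Simultaneously, $s^*_k - s_k$ equals $+1$ at $k \in \{i-1, j+1\}$ and $-1$ at $k \in \{i, j\}$, and is zero elsewhere, with the natural collapse when $i = j$ (the two $-1$'s merge into a $-2$ at $k = i$).

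Substituting these local changes into the formula for $J(v_d)$ and collecting the four perturbed slots $\{i-1, i, j, j+1\}$ together with the pure $T$-drops across $i+1 \le k \le j-1$, the result telescopes into the differences $T_{i-1} - T_i$ and $T_{j+1} - T_j$, equal by \eqref{eqn:hit-adjacent} to $\pm(2|L_{i-1}|-1)$ and $\mp(2|L_j|-1)$. Using the set identity $|L_j| - |L_{i-1}| = (j-i+1) + \sum_{k=i}^{j} s_k$, which simply counts the spine segment $v_i,\ldots,v_j$ together with its attached leaves, the difference collapses to
\[
\newJ(v_d) - J(v_d) \;=\; -8 \Bigl[\, (j - i) + \sum_{k=i}^{j} s_k \,\Bigr].
\]
This is strictly negative: either $j > i$, in which case $(j - i) \ge 1$, or $j = i$, in which case $s_i \ge 2$ because both leaves $x$ and $y$ sit at $v_i = v_j$. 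Dividing by $2|E|$ yields $\newH(\newpi, v_d) < H(\pi, v_d)$, and the reflected argument handles $v_0$.

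The main obstacle is simply keeping the slot-by-slot bookkeeping honest through the degenerate cases $i = j$ and $j = i+1$, and invoking the set identity $|L_j| - |L_{i-1}| = (j-i+1) + \sum_{k=i}^{j} s_k$ at the right moment so that the $T$-differences telescope into the manifestly negative closed form above; once the local change data is tabulated correctly, the remaining cancellation is entirely mechanical.
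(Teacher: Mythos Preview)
Your proof is correct. The formula $J(v_d) = T_0 + \sum_{k=1}^{d-1}\bigl[2(1+s_k)T_k + s_k\bigr]$ checks out, the local perturbation data for $\Delta T_k$ and $\Delta s_k$ are accurate, and the four pieces of the difference combine exactly as you claim to $-8\bigl[(j-i) + \sum_{k=i}^{j} s_k\bigr]$; the strict negativity argument handles both $i<j$ and $i=j$ cleanly, and the reflection for $v_0$ is legitimate.

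There is nothing in the paper to compare against: this lemma is quoted as Lemma~4.10 of \cite{BHOV} and is not proved here. Your argument is a self-contained replacement for that citation, and in fact yields a sharper conclusion than the bare inequality---you obtain the exact closed form for $\newJ(v_d)-J(v_d)$, which the paper does not need but which makes the strictness transparent.
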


We use this lemma to show that the optimizing caterpillar is either a double broom or a near double broom.

\begin{lemma}
\label{lemma:caterpillar-to-double-broom}
Given a caterpillar $G \in \Tnd$ with at least two leaves adjacent to spine vertices $\{v_2, v_3, \ldots, v_{d-2}\}$. Then there is a double broom or a near double broom $\newG \in \Tnd$ such that $\newJ(v_d) < J(v_d)$ and $\newJ(v_0) < J(v_0)$.
\end{lemma}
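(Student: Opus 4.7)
The plan is to push all interior leaves outward toward $v_1$ and $v_{d-1}$ by repeated application of the two-leaf surgery from Lemma \ref{lemma:two-leaf-pi-access-time}, and to monitor a quadratic potential that forces the process to terminate. Call an off-spine leaf $z$ \emph{interior} if its unique spine neighbor $v_{p(z)}$ satisfies $2 \leq p(z) \leq d-2$. Observe that a caterpillar in $\Tnd$ with spine $P = \{v_0, \ldots, v_d\}$ has at most one interior leaf exactly when it is either a double broom (zero interior leaves) or a near double broom (exactly one). So it suffices to reduce $G$ to a caterpillar with at most one interior leaf, while at every step strictly decreasing both $J(v_0)$ and $J(v_d)$.

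Define the potential
\[
\Phi(G) = \sum_{\text{off-spine leaves } z} p(z)(d - p(z)),
\]
which is a nonnegative integer. While $G$ has at least two interior leaves, choose two of them $x, y$ with $p(x) = i \leq j = p(y)$ and apply the surgery $\newG = \tau(G, P; (i, i-1) \wedge (j, j+1))$, which is permitted because $2 \leq i \leq j \leq d-2$. A direct expansion yields
\[
\Phi(\newG) - \Phi(G) = (i-1)(d-i+1) + (j+1)(d-j-1) - i(d-i) - j(d-j) = 2(i - j - 1) \leq -2,
\]
so $\Phi$ strictly decreases. Since $\Phi$ is bounded below by $0$ and integer-valued, the iteration halts after finitely many steps at a caterpillar with at most one interior leaf, i.e.\ a double broom or near double broom. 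Each surgery only relocates off-spine leaves among spine vertices, so $P$ remains a geodesic of length $d$ and the order $n$ is preserved; hence every intermediate graph (and in particular the terminal one) lies in $\Tnd$.

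Finally, Lemma \ref{lemma:two-leaf-pi-access-time} asserts that a single application of $\tau(\cdot; (i, i-1) \wedge (j, j+1))$ strictly decreases both $H(\newpi, v_0)$ and $H(\newpi, v_d)$, which is equivalent (after multiplication by $2|E| = 2(n-1)$) to strictly decreasing both $J(v_0)$ and $J(v_d)$. Chaining these strict inequalities along the finite sequence of surgeries produced by the algorithm gives $\newJ(v_0) < J(v_0)$ and $\newJ(v_d) < J(v_d)$ for the resulting (near) double broom $\newG$, as required.

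The only subtle point is choosing a potential that is guaranteed to decrease at every surgery. The naive choice (the number of interior leaves) only decreases when a surgery moves a leaf out of the interior range, namely when $i = 2$ or $j = d-2$, and remains constant otherwise; so it would not bound the number of surgery steps. The quadratic potential $p(d-p)$ circumvents this by rewarding outward movement uniformly: the identity $(i-j-1) \leq -1$ that drives the decrease is forced by $i \leq j$ alone, independent of the absolute positions of $x$ and $y$.
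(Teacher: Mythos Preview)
Your proof is correct and follows the same approach as the paper: repeatedly apply the two-leaf surgery of Lemma \ref{lemma:two-leaf-pi-access-time} until at most one interior leaf remains, chaining the strict inequalities for $J(v_0)$ and $J(v_d)$. The paper simply asserts that repeating the process terminates, whereas you supply the explicit potential $\Phi(G) = \sum_z p(z)(d - p(z))$ and verify it drops by at least $2$ per step; this makes termination rigorous but the underlying argument is identical.
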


\begin{proof}
By Lemma \ref{lemma:two-leaf-pi-access-time}, we can perform tree surgery $\tau(G,P; (i,i-1) \wedge (j,j+1)$ where $2 \leq i \leq j \leq d-2$ to obtain $\newG \in \Tnd$ with $\newH(\newpi,v_d) < H(\pi, v_d)$ and $\newH(\newpi,v_0) < H(\pi, v_0)$. We  repeat this process as long as there are at least two leaves adjacent to $\{v_2, v_3, \ldots, v_{d-2}\}$, increasing the meeting time with each surgery. When we are done, we either have a double broom or a near double broom.
\end{proof}

We now optimize over double-brooms and near double-brooms.  
We start with the near double-broom case. We will show that if this near double-broom does not have $\ell = r$ and its singleton leaf is adjacent to $v_{\lfloor d/2 \rfloor}$, then the tree is not optimal. First, we need a lemma about moving a leaf to be adjacent to a target vertex.

\begin{lemma}
\label{lemma:move-leaf}
Let $G=(V,E)$ be a tree.
Let $x,y,z  \in V$ where $z$ is a leaf adjacent to $y$.
Let $G^* = G - (y, z) + (x, z)$.
Then the following hold.
\begin{enumerate}[(a)]
    \item $H^*(v, x) \leq H(v, x)$ for every vertex $v \in V$.
    \item $\newJ(x) < \J(x)$
\end{enumerate}
\end{lemma}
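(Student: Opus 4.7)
The plan is to exploit the structural observation that $G \setminus \{z\}$ and $\newG \setminus \{z\}$ coincide as graphs: only the attachment of the leaf $z$ changes, so $\newdeg(x) = \deg(x) + 1$, $\newdeg(y) = \deg(y) - 1$, and $\newdeg = \deg$ elsewhere. (If $y = x$ the surgery is trivial, so I assume $y \neq x$.) Consequently, for any $v \neq z$ the $(v, x)$-path is identical in the two trees, and likewise the $(w, x)$-path is identical for any $w \neq z$; only the $(z, x)$-path differs, consisting of a single edge in $\newG$ versus a path through $y$ in $G$. I would leverage this rigidity via the hitting-time formula in Lemma~\ref{lemma:hit-time}.

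For part (a), I would dispatch $v = z$ directly: since $z$ is a leaf adjacent to $x$ in $\newG$, we have $\newH(z, x) = 1$, whereas $H(z, x) = 1 + H(y, x) \geq 2$ in $G$. For $v \neq z$, I would expand both $H(v, x)$ and $\newH(v, x)$ via Lemma~\ref{lemma:hit-time} and take the difference. The summands agree except at $w \in \{x, y, z\}$: the $w = x$ terms vanish since $\ell(v, x; x) = 0$; the $w = y$ term contributes $-\ell(v, y; x)$ from the degree drop at $y$; and the $w = z$ term contributes another $-\ell(v, y; x)$, because $\newell(v, z; x) = 0$ (the edge $(z, x)$ meets any $(v, x)$-path only at $x$), while $\ell(v, z; x) = \ell(v, y; x)$ (the $(z, x)$-path in $G$ prepends the edge $(z, y)$ to the $(y, x)$-path, which the $(v, x)$-path cannot use since $v \neq z$). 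Combining gives the clean identity $\newH(v, x) - H(v, x) = -2\,\ell(v, y; x) \leq 0$.

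For part (b), I would start from
$$\newJ(x) - J(x) = \sum_{u \in V} \bigl[\newdeg(u)\,\newH(u, x) - \deg(u)\,H(u, x)\bigr].$$
The $u = x$ term vanishes. The $u = z$ term equals $\newH(z, x) - H(z, x) = -H(y, x)$, which is strictly negative since $y \neq x$. The $u = y$ term rearranges as $\deg(y)\bigl[\newH(y, x) - H(y, x)\bigr] - \newH(y, x)$, which is nonpositive by part (a) and the nonnegativity of $\newH(y, x)$. Every other $u$ term equals $\deg(u)\bigl[\newH(u, x) - H(u, x)\bigr] \leq 0$ by part (a). Summing yields $\newJ(x) - J(x) \leq -H(y, x) < 0$.

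The main subtlety is the $v = z$ case in part (a): the clean identity $-2\,\ell(v, y; x)$ breaks down there because the two $(z, x)$-paths share no edges, so the edge-by-edge accounting via Lemma~\ref{lemma:hit-time} requires a separate argument. Happily, the direct evaluation $\newH(z, x) = 1$ makes the desired inequality immediate, and it is precisely this $u = z$ contribution that supplies the strict inequality in part (b).
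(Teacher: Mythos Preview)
Your proof is correct and follows essentially the same approach as the paper's: both arguments use Lemma~\ref{lemma:hit-time} and track the changes at $w\in\{x,y,z\}$ for part~(a), then combine degree changes with part~(a) termwise for part~(b). The only cosmetic differences are that you state the exact identity $\newH(v,x)-H(v,x)=-2\,\ell(v,y;x)$ (the paper stops at the inequality) and you extract the strict inequality in~(b) from the $u=z$ term rather than from the degree drop at $y$.
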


\begin{proof}
(a) 
For a vertex $v \in V \backslash \{x,z\}$, we have
$\ell^*(v, z; x)=0$ and $\ell^*(v, x; x)=0$. Therefore
equation \eqref{eqn:hit-time} gives
\begin{align*}
    H^*(v, x) 
    &= 
    \sum_{w \in V} \ell^*(v, w; x) \deg^*(w) 
    = 
    \sum_{w \in V \backslash \{x,y,z\}} \!\!\!\!\! \ell^*(v,w;x) \deg^*(w)
    + \ell^*(v,y;x) \deg^*(y) \\
    &= 
    \sum_{w \in V \backslash \{x,y,z\}} \!\!\!\!\! \ell(v,w;x) \deg(w)
    + \ell(v,y;x) (\deg(y) -1) \\
    & \leq 
    \sum_{w \in V} \ell(v, w; x) \deg(w) 
    = H(v,x).
\end{align*}
The conclusion holds for both $v = x$ and $v = z$, because $H^*(x, x) = 0 = H(x, x)$ and
$H^*(z,x) = 1 < H(z, x)$.

(b) Observe that  $H^*(x,x) = 0 = H(x,x)$, and consequently
\begin{align*}
    \newJ(x) &=
    \sum_{w \in V \backslash\{x,y\}} \newdeg(w) \newH(w,x)
    + \newdeg(y)  \newH(y,x) \\
    & < 
    \sum_{w \in V \backslash\{x,y\}} \deg(w)  \newH(w,x)
    +  \deg(y) \newH(y,x) \\
    &\leq 
    \sum_{w \in V \backslash\{x,y\}} \deg(w) H(w,x)
    + \deg(y) H(y,x) = J(x)
\end{align*}
where the last inequality follows from part (a).
\end{proof}

Now we prove two lemmas about the joining time for near double brooms.

\begin{lemma}
    \label{lemma:near-unbalanced-leaves}
    If $G$ is a near double-broom with $\ell < r$ then there exists a double broom $\newG$ with $\Jmax(\newG) < \Jmax(G)$.
\end{lemma}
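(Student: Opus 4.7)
The plan is to take $\newG$ to be the tree obtained from $G$ by moving the singleton leaf $z$ from its current neighbor $v_k$ (with $2 \le k \le d-2$) to be adjacent to $v_1$. Since $v_0, v_1, \dots, v_d$ remains a geodesic after this move and the number of vertices is unchanged, $\newG \in \Tnd$. By construction, $\newG$ is a double broom whose left end carries $\ell+1$ leaves and whose right end carries $r$ leaves; because $\ell<r$ are integers, $\ell+1 \le r$, so the right end of $\newG$ still has at least as many bristles as the left end.

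To verify that this surgery decreases the maximum joining time, I would first apply Lemma \ref{lemma:move-leaf} with $y = v_k$, $x = v_1$, and the leaf $z$: part (b) immediately gives $\newJ(v_1) < J(v_1)$. Because $v_0$ is a leaf with unique neighbor $v_1$ in both $G$ and $\newG$, Lemma \ref{lemma:unif-to-leaf} yields $J(v_0) = J(v_1) + 4(n^2-3n+2)$ and $\newJ(v_0) = \newJ(v_1) + 4(n^2-3n+2)$, and subtracting these equations gives $\newJ(v_0) < J(v_0)$. As a sanity check, one could alternatively get this inequality by comparing formulas \eqref{eqn:near-hitting-time-right-side} and \eqref{eqn:hitting-time-right-side} directly; the difference telescopes to $8\sum_{i=d-k}^{d-2}(r+i) > 0$ since $k \ge 2$.

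To finish, apply Lemma \ref{lemma:double-broom-join-time} to $\newG$: since $\ell+1 \le r$, each term $(2(\ell+1)+2i-1)^2$ in the formula for $\newJ(v_d)$ is at most the corresponding term $(2r+2i-1)^2$ in the formula for $\newJ(v_0)$, hence $\newJ(v_d) \le \newJ(v_0)$. Together with Lemma \ref{lemma:max-on-caterpillar}, this gives
$$\Jmax(\newG) = \max\{\newJ(v_0), \newJ(v_d)\} = \newJ(v_0) < J(v_0) \le \Jmax(G),$$
as required. The only real subtlety is choosing the direction in which to move $z$: pushing $z$ toward the heavier side $v_{d-1}$ would reduce $\newJ(v_d)$ but \emph{increase} $\newJ(v_0)$, and since $v_0$ already realizes the maximum in $G$, this could fail to reduce $\Jmax$. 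Moving $z$ toward the lighter side cuts directly into the dominant term and keeps $\newG$ in a regime where $v_0$ still wins the maximum, so the comparison is clean.
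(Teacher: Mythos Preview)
Your proof is correct and follows essentially the same route as the paper: move the singleton leaf $z$ to $v_1$, use Lemma~\ref{lemma:move-leaf}(b) together with Lemma~\ref{lemma:unif-to-leaf} to get $\newJ(v_0) < J(v_0)$, and then use the double-broom joining time formulas with $\ell+1 \le r$ to conclude $\newJ(v_d) \le \newJ(v_0)$, so that $\Jmax(\newG) = \newJ(v_0) < J(v_0) \le \Jmax(G)$. Your explicit appeal to Lemma~\ref{lemma:max-on-caterpillar} and the term-by-term comparison in Lemma~\ref{lemma:double-broom-join-time} make the argument slightly more self-contained than the paper's version.
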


\begin{proof}
    Let $G\in \Tnd$ be a near double-broom with $\ell < r$ and  with singleton leaf $z$
     adjacent to vertex $v_k$ where $2 \leq k \leq d-2$. 
Set $G^* = G-(v_k,z)+(v_1,z)$ to be the double broom attained by moving $z$ to be adjacent to $v_1$.     
 By Lemma \ref{lemma:unif-to-leaf} and Lemma \ref{lemma:move-leaf}(b), 
 $$
 \newJ(v_0) =  \newJ(v_1) + 4(n^2-3n+2) <
  J(v_1) + 4(n^2-3n+2) = J(v_0).
 $$
The tree $\newG$ is a double broom with $\ell+1$ left leaves and $r$ right leaves, where $\ell+1 \leq r$. It is clear from Lemma \ref{lemma:near-double-broom-join-time} that $\newJ(v_0) \geq \newJ(v_d)$.
Finally, we have
$$
\Jmax(\newG) = \newJ(v_0) < J(v_0) \leq \Jmax(G).
$$
Therefore $G$ does not achieve $\min_{G\in \Tnd} \max_{v\in V}J(v)$.
\end{proof}

\begin{lemma}
    \label{lemma:near-unbalanced-singleton}
    Among near double-brooms with $\ell = r$, the balanced near double broom $\Dnd'$ with singleton leaf $z$ adjacent to $v_{\lfloor d/2 \rfloor}$ is the unique minimizer of the joining time.
\end{lemma}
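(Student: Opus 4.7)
The plan is to parameterize near double brooms with $\ell = r = (n-d)/2$ by the single integer $k \in \{2, \ldots, d-2\}$ identifying the spine vertex $v_k$ to which the singleton leaf $z$ is attached, and to minimize $\Jmax$ over this one-parameter family. By Corollary \ref{cor:leaf-is-max} and Lemma \ref{lemma:max-on-caterpillar}, $\Jmax(G) = \max\{J(v_0), J(v_d)\}$ for any such $G$, so the problem reduces to minimizing the pointwise maximum of two functions of $k$.

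The key technical step is showing that $J(v_0)$ is strictly increasing and $J(v_d)$ is strictly decreasing in $k$. Using formulas \eqref{eqn:near-hitting-time-right-side} and \eqref{eqn:near-hitting-time-left-side}, I would compute the discrete differences by noting that shifting $k \to k+1$ simply moves exactly one summand across the boundary between the two sums in each formula. The identity $(a+1)^2 - (a-1)^2 = 4a$ then collapses these differences to
$$J(v_0)\big|_{k+1} - J(v_0)\big|_k = 8(r+d-k-1) > 0 \quad\text{and}\quad J(v_d)\big|_{k+1} - J(v_d)\big|_k = -8(\ell+k) < 0.$$

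Since $\ell = r$, the spine reflection is a graph isomorphism that swaps $v_0 \leftrightarrow v_d$ and sends the singleton at $v_k$ to $v_{d-k}$, yielding the symmetry $J(v_0)|_k = J(v_d)|_{d-k}$. This pins the crossover of the two monotone functions to $k = d/2$. A short case analysis on the parity of $d$ then completes the argument: when $d$ is even, $k = d/2$ is the unique $k$ at which $J(v_0) = J(v_d)$, and strict monotonicity forces $\max\{J(v_0), J(v_d)\}$ to exceed this common value at every other $k$; when $d$ is odd, $k = (d-1)/2 = \lfloor d/2 \rfloor$ and $k = (d+1)/2$ give isomorphic trees with the same $\Jmax = J(v_d)|_{(d-1)/2} = J(v_0)|_{(d+1)/2}$, while monotonicity strictly excludes all other $k$ (smaller $k$ increases $J(v_d)$ above this value, larger $k$ increases $J(v_0)$ above it).

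The main obstacle is the discrete-derivative computation, which demands careful bookkeeping of the shifting summation bounds in \eqref{eqn:near-hitting-time-right-side} and \eqref{eqn:near-hitting-time-left-side} and of which squared term migrates from one sum to the other. Once this monotonicity is established, combining it with the reflection symmetry and dispatching the parity cases is routine.
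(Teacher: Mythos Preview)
Your proposal is correct and follows essentially the same approach as the paper: both arguments reduce to monotonicity of $J(v_0)$ and $J(v_d)$ in $k$ combined with the $\ell=r$ reflection symmetry to pin the optimum at $k=\lfloor d/2\rfloor$. Your version is considerably more explicit than the paper's---the paper invokes Lemma~\ref{lemma:near-double-broom-join-time} and simply asserts that the monotonicity is ``clear,'' restricting by WLOG to $k\le\lfloor d/2\rfloor$, whereas you compute the discrete derivatives $8(r+d-k-1)$ and $-8(\ell+k)$ directly and carry out the parity split in full.
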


\begin{proof}
Let $G$ be a near double broom with $\ell = r$.
We must show that the optimal neighbor $v_k$ for the singleton leaf $z$ is $k=\lfloor d/2 \rfloor$.
Without loss of generality, we have $2 \leq k \leq \lfloor d/2 \rfloor$. It is clear from Lemma \ref{lemma:near-double-broom-join-time} and the slight asymmetry of $G$ that (a) $J(v_d) > J(v_0)$ and (b) if $k < \lfloor d/2 \rfloor$ then moving $z$ from $v_i$ to $v_{\lfloor d/2 \rfloor}$ decreases $J(v_d)$, while preserving $J(v_d) > J(v_0)$. So the unique best choice is $k=\lfloor d/2 \rfloor$.   
\end{proof}

Next, we consider the joining time for double brooms.  

\begin{lemma}
    \label{lemma:balanced-double-broom-is-best}
    Let $G\in \Tnd$ be an unbalanced double broom. Then the balanced double broom $\newG=\Dnd$ satisfies $\Jmax(\newG) < \Jmax(G)$. Furthermore, if $n$ and $d$ have the same parity, then the balanced near double broom $\newnewG = \Dnd'$ satisfies $\Jmax(\newnewG) < \Jmax(\newG)$.
\end{lemma}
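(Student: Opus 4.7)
The plan is to treat the two conclusions separately. For the first, I will exploit Lemma \ref{lemma:double-broom-join-time}: for a double broom with $\ell$ left leaves and $r$ right leaves,
$$
J(v_0) = 4n^2 - 11n - d + 9 + \sum_{i=1}^{d-2}(2r + 2i - 1)^2
$$
depends only on $r$ and is strictly increasing in $r$, while $J(v_d)$ is the same expression with $\ell$ in place of $r$. After relabeling so that $\ell \leq r$, Lemma \ref{lemma:max-on-caterpillar} gives $\Jmax(G) = J(v_0)$, a strictly increasing function of $r$ alone. Since $\ell + r = n - d + 1$ is fixed, the constraint $\ell \leq r$ forces $r \geq \lceil (n-d+1)/2 \rceil$, with equality attained uniquely by $\Dnd$. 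Hence $\Jmax(\Dnd) < \Jmax(G)$ for every unbalanced double broom $G$.

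For the second assertion I will compute $\Tmeet(\Dnd)$ explicitly in the same-parity case and compare it to Lemma \ref{lemma:near-double-broom-meet-time}. When $n$ and $d$ share parity, $\Dnd$ has $\ell = (n-d)/2$ and $r = (n-d+2)/2$, so Lemma \ref{lemma:double-broom-join-time} gives
$$
J_{\Dnd}(v_0) = 4n^2 - 11n - d + 9 + \sum_{i=1}^{d-2}\bigl((n-d+1) + 2i\bigr)^2.
$$
Expanding the square and using the standard closed forms for $\sum_{i=1}^{m} i$ and $\sum_{i=1}^{m} i^2$, then dividing by $2(n-1)$, yields
$$
\Tmeet(\Dnd) = \tfrac{d+2}{2}\, n + \frac{d^3 - 6d^2 + 11d - 6}{6(n-1)} + \tfrac{d-9}{2}.
$$
Subtracting the corresponding formula for $\Tmeet(\Dnd')$ from Lemma \ref{lemma:near-double-broom-meet-time} and simplifying, I expect the difference to reduce to
$$
\Tmeet(\Dnd) - \Tmeet(\Dnd') = \begin{cases} (d-2)\cdot\dfrac{2n - d}{2(n-1)}, & d \text{ even},\\[4pt] (d-3)\cdot\dfrac{2n - d - 1}{2(n-1)}, & d \text{ odd.}\end{cases}
$$
Each factor is strictly positive once $4 \leq d < n$ (using that $d \geq 4$ together with odd parity forces $d \geq 5$ in the second case), so $\Jmax(\Dnd') < \Jmax(\Dnd)$ as required.

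The hardest step will be the bookkeeping in the second part: the same-parity formula for $\Tmeet(\Dnd)$ is not supplied by Lemma \ref{lemma:double-broom-meet-time} and must be derived from scratch, and then carefully matched against the two-case expression in Lemma \ref{lemma:near-double-broom-meet-time}. The algebra is routine but easy to miscount, so I would verify the final simplifications with a symbolic computation package. Once the difference has been put in the factored form above, positivity is immediate from $n > d$ and the lower bounds on $d$.
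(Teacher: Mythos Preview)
Your argument for the first assertion matches the paper's: both invoke Lemma~\ref{lemma:double-broom-join-time} and Lemma~\ref{lemma:max-on-caterpillar} to identify $\Jmax(G)=J(v_0)$ (after arranging $\ell\le r$) as a strictly increasing function of $r$ alone under the constraint $\ell+r=n-d+1$, which forces the minimiser to have $r=\lceil(n-d+1)/2\rceil$.

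For the second assertion you take a genuinely different route. The paper argues structurally: starting from $\Dnd$ with $\ell=(n-d)/2$ and $r=\ell+1$, it moves one right leaf to $v_{\lfloor d/2\rfloor}$ to obtain $\Dnd'$, observes that $\Jmax(\Dnd')=\newnewJ(v_d)$ (since $\ell=r$ and the singleton sits at or left of centre), and then compares $\newnewJ(v_d)$ directly with $\Jmax(\Dnd)=\newJ(v_0)$ using the \emph{unsimplified} sums of Lemmas~\ref{lemma:double-broom-join-time} and~\ref{lemma:near-double-broom-join-time}. Because $2r-1=2\ell+1$, one gets
\[
\newJ(v_0)-\newnewJ(v_d)=\sum_{i=1}^{k-1}\bigl[(2\ell+2i+1)^2-(2\ell+2i-1)^2\bigr]=\sum_{i=1}^{k-1}8(\ell+i)>0
\]
with $k=\lfloor d/2\rfloor\ge 2$, and the inequality drops out in one line with no closed forms needed. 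Your approach---deriving the same-parity closed form for $\Tmeet(\Dnd)$ from scratch and subtracting the formula of Lemma~\ref{lemma:near-double-broom-meet-time}---is also valid and has the bonus of producing the explicit factored gaps $(d-2)(2n-d)/2(n-1)$ and $(d-3)(2n-d-1)/2(n-1)$, but it costs substantially more algebra (and, as you acknowledge, warrants symbolic verification), whereas the sum comparison is essentially immediate.
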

\begin{proof}
Let $G \in \Tnd$ be a double broom with $\ell \leq r$. 
By Lemma \ref{lemma:max-on-caterpillar}, $J(G) = \max \{ J(v_0), J(v_d) \}$. By Lemma \ref{lemma:double-broom-join-time}, minimizing $r-\ell$ achieves the smallest value of $J(G)$ among double brooms in $\Tnd$. So if $t=r+\ell = n-d+1$ is even, then our unique best choice is the balanced double broom $\Dnd$ with $t/2$ leaves at each end. Of course, $n-d+1$ is even if and only if $n$ and $d$ have different parity.

When $t=\ell+r$ is odd (so $n$ and $d$ have the same parity), then the balanced double broom $\Dnd$ with $\ell = (t-1)/2$ left leaves and $r=(t+1)/2$ right leaves achieves the smallest meeting time. However, we claim that we can achieve an even smaller meeting time by moving one leaf $z$ from $v_{d-1}$ to $v_{\lfloor d/2 \rfloor}$, resulting in the balanced near double broom $\Dnd'$. Indeed, we still have $\newnewJ(v_d) \geq \newnewJ(v_0)$, but clearly $\newnewJ(v_d) < \newJ(v_d)$. By Lemma \ref{lemma:near-unbalanced-singleton}, $\Dnd'$ is also the best choice among near double brooms. In conclusion, balanced near double broom $\Dnd'$ (with $(t-1)/2$ left leaves and $(t-1)/2$ right leaves and singleton leaf adjacent to $v_{\lfloor d/2 \rfloor}$) is the optimal choice. 
\end{proof}

We conclude by offering the proof of Theorem \ref{thm:min-meet}.

\begin{proof}[Proof of Theorem \ref{thm:min-meet}.]
We must determine the tree $G \in \Tnd$ that achieves $\min_{G \in \Tnd} \Tmeet(G) = \min_{G \in \Tnd} \max_{v \in V} H(\pi, v)$. Equivalently,
we can find the tree that achieves $\min_{G \in \Tnd} \Jmax(G) = \min_{G \in \Tnd} \max_{v \in V} J(v)$.
By Lemma \ref{lemma:tree-to-caterpillar}, $G$ must be a caterpillar. By Lemma \ref{lemma:caterpillar-to-double-broom}, this caterpillar must be a double broom or a near double broom. 

Suppose that $G$ is a near double broom. If $\ell < r$, then Lemma \ref{lemma:near-unbalanced-leaves} shows that there is either a double broom $\newG$ with $\Jmax(\newG) < \Jmax(G)$. If $\ell=r$, which occurs if and only if $n$ and $d$ have the same parity, then the optimal neighbor for the singleton leaf is $v_{\lfloor d/2 \rfloor}$ by Lemma \ref{lemma:near-unbalanced-singleton}. So the optimal choice is the balanced near double broom $\Dnd'$.

Now suppose that $G$ is a double broom. If $G$ is not balanced, then Lemma \ref{lemma:balanced-double-broom-is-best} shows that the balanced double broom $\Dnd$ satisfies $\Jmax(\Dnd) < \Jmax(G)$. If $n$ and $d$ have different parity, so that $\ell=r$, then we have minimized the joining time. Otherwise, the balanced near double broom $\Dnd'$ satisfies $\Jmax(\Dnd') < \Jmax(\Dnd) < \Jmax(G)$, and it minimizes the joining time.

Finally, the formulas for $\Tmeet(\Dnd)$ and $\Tmeet(\Dnd')$ were established in Lemma \ref{lemma:double-broom-meet-time} and Lemma \ref{lemma:near-double-broom-meet-time}, respectively. 
\end{proof}

%%%%%%%%%
%%%%%%%%%
%%%%%%%%%
\section{The  meeting time for trees of fixed order}

In this brief section contains the proof of Theorem \ref{thm:max-trees}. We consolidate our results to give the upper and lower bounds for $\Tmeet(G)$ when $G \in \Tn$ is a tree on $n$ vertices.

\begin{proof}[Proof of Theorem \ref{thm:max-trees}]
Let $G \in \Tn$. We consider the lower bound. 
The meeting time for the star is $\Tmeet(S_n)=2n-7/2$ by Theorem \ref{thm:meet-path-star}. This value is smaller than the values for $\Tmeet(D_{n,3})$ provided in Theorem \ref{thm:min-meet-diam-3}.

Now consider the Theorem \ref{thm:min-meet} formulas for $\Tmeet(\Dnd)$ for $d \geq 4$. We consider formula \eqref{eqn:min-meet-opposite-parity} for $n$ and $d$ with opposite parities.

First, we suppose that $d$ is even and $n$ is odd. Plugging $d=2$ into this formula results in 
the value for $\Tmeet(S_n)$. More generally, observe that
\begin{equation}
\label{eqn:max-trees-by-2}
  \Tmeet(D_{n,d+2}) - \Tmeet(D_{n,d}) =  (n-1)^2 + \frac{d(d-2)}{n-1} > 0  
\end{equation}
so this function is increasing for even $d$. Hence it is strictly larger than $\Tmeet(S_n)$ for even $d \geq 4$. 

Second, we suppose that $d$ is odd and $n$ is even. Plugging $d=3$ in formula \eqref{eqn:min-meet-opposite-parity} results in the value for $\Tmeet(D_{n,3})$ for even $n$ in Theorem \ref{thm:min-meet-diam-3}, which we already noted is larger than $\Tmeet(S_n)$. Equation \eqref{eqn:max-trees-by-2} shows that the function is increasing for odd $d$. Hence, the value is strictly larger than $\Tmeet(S_n)$ for odd $d \geq 5$.

The argument for $n$ and $d$ with the same parities, using equation \eqref{eqn:min-meet-same-parity}, is analogous. We conclude that the star $S_n$ is the unique minimizer of the meeting time.

We now turn to the upper bound. By Theorem \ref{thm:max-meet}, we can restrict our attention to double brooms.  By Corollary \ref{cor:max-join-increasing}, the maximum joining time $\Jmax(B_{n,d})$ is strictly increasing for $2 \leq d \leq n-1$. Therefore the unique maximizer is $P_n = B_{n,n-1}$. Of course, it follows that the path $P_n$ maximizes the meeting time $\Tmeet(G) = \Jmax(G)/2|E|$.
  Theorem \ref{thm:meet-path-star} provides the formula for $\Tmeet(P_n)$.
\end{proof}

%%%%%%%%%
%%%%%%%%%
%%%%%%%%%
\section{Conclusion}

We have characterized the extremal values of $\Tmeet(G) = \max_{w \in V} \sum_{v \in V} \pi_v H(v, w)$ for graphs $G \in \Tnd$ on $n$ vertices with diameter $d$. The maximizing tree is the broom graph $B_{n,d}$. For $d=3$, the minimizing structure is the balanced double broom $D_{n,3}$. For $d > 3$, the minimizer depends on the relative parities of $n$ and $d$. When $n$ and $d$ have opposite parities, the minimizing tree is the balanced double broom $D_{n,d}$.  When $n$ and $d$ have the same parity, the minimizer is the balanced near double broom $D_{n,d}'$.

Looking to further work, it would be interesting to characterize the \emph{best meeting time} 
$$
\Tbestmeet(G) = \min_{w \in V} H(\pi, w) = \min_{w \in V} \sum_{v \in V} \pi_v H(v, w)
$$ 
for graphs in $\Tnd$.
More generally, there are other mixing measures besides Kemeny's constant and the meeting time. For example, Beveridge et al.~\cite{BHOV} showed that the balanced double broom $D_{n,d}$ achieves the maximum \emph{exact mixing time} $\max_{v \in V} H(v,\pi)$, where $H(v, \pi) = \max_{w \in V} H(w,v) - H(\pi,v)$. So characterizing the \emph{best mixing time} $\min_{v \in V} H(v,\pi)$ for trees in $\Tnd$ is an open question. For another example,  the formula for the \emph{mean first passage time}
$ \frac{1}{n(n-1)} \sum_{v \in V} \sum_{w \in V} H(v,w)$ is similar to Kemeny's constant. Perhaps techniques from \cite{MaWang2020} could be useful in determining the best and worst trees for this quantity. 

%Finally, it would also be interesting to pursue these extremal questions for all graphs of diameter $d$.

\bibliographystyle{plain}
\bibliography{mybib}

\begin{thebibliography}{10}

\bibitem{aldous+fill}
D.~Aldous and J.~Fill.
\newblock {Reversible Markov Chains and Random Walks on Graphs}.
\newblock https://www.stat.berkeley.edu/users/aldous/RWG/book.html, accessed
  July 2024.

\bibitem{beveridge2009}
A.~Beveridge.
\newblock Centers for random walks on trees.
\newblock {\em SIAM Journal on Discrete Mathematics}, 23(1):300--318, 2009.

\bibitem{BHOV}
A.~Beveridge, K.~Heysse, R.~O'Higgins, and L.~Vescovo.
\newblock The exact mixing time for trees of fixed order and diameter.
\newblock submitted, https://arxiv.org/abs/2411.06247.

\bibitem{BW2013}
A.~Beveridge and M.~Wang.
\newblock Exact mixing times for random walks on trees.
\newblock {\em Graphs \& Combinatorics}, 29(4):757--772, 2013.

\bibitem{CDK2020}
L.~Ciardo, G.~Dahl, and S.~Kirkland.
\newblock On {K}emeny's constant for trees with fixed order and diameter.
\newblock {\em Linear and Multilinear Algebra}, 70(12):2331--2353, 2022.

\bibitem{DTW2003}
I.~Dumitriu, P.~Tetali, and P.~Winkler.
\newblock On playing golf with two balls.
\newblock {\em SIAM J. Discrete Math.}, 16(4):604--615, 2003.

\bibitem{GW2013}
A.~Georgakopoulos and S.~G. Wagner.
\newblock Hitting times, cover cost, and the {W}iener index of a tree.
\newblock {\em Journal of Graph Theory}, 84, 2013.

\bibitem{Haggstrom2002}
O.~H\"aggstr\"om.
\newblock {\em Finite Markov Chains and Algorithmic Applications}.
\newblock Cambridge University Press, 5 2002.

\bibitem{Lovasz1996}
L.~Lov\'asz.
\newblock Random walks on graphs: A survey.
\newblock In D.~Mikl\'{o}s, V.T. S\'{o}s, and T.~Sz\H{o}nyi, editors, {\em
  Combinatorics: Paul Erd\H{o}s is Eighty}, volume~2, pages 1--46. Bolyai Soc.
  Math. Stud., 1996.

\bibitem{MaWang2020}
F.~Ma and P.~Wang.
\newblock Random walks on a tree with applications.
\newblock {\em Phys. Rev. E}, 102:022305, Aug 2020.

\bibitem{MPL2017}
N.~Masuda, M.~A. Porter, and R.~Lambiotte.
\newblock Random walks and diffusion on networks.
\newblock {\em Physics Reports}, 716-717:1--58, 2017.

\bibitem{RM2021}
A.~P. Riascos and J.~L. Mateos.
\newblock Random walks on weighted networks: a survey of local and non-local
  dynamics.
\newblock {\em Journal of Complex Networks}, 9(5):cnab032, 10 2021.

\end{thebibliography}

\end{document}